\newcommand{\predh}{\ensuremath{{\widehat{\text{p}}(j)}}}
\newcommand{\predz}{\ensuremath{{{\text{p}}(j)}}}
\newtheorem{theorem}{Theorem}
\newtheorem{remark}{Remark}
\newtheorem{lemma}{Lemma}
\newtheorem{assump}{Assumption}
\newtheorem{condition}{Condition}
\newtheorem{proposition}{Proposition}
\newcommand{\vertiii}[1]{{\left\vert\kern-0.25ex\left\vert\kern-0.25ex\left\vert #1
    \right\vert\kern-0.25ex\right\vert\kern-0.25ex\right\vert}}
\title{\bf Inference in high-dimensional graphical models}
 \author{ 
%\thanks{          ... 
  \small \bf Jana Jankov\'{a} \and  \small \bf Sara van de Geer 
%\thanks{                   ...       }
       }
			\date{\small\it Seminar for Statistics\\ETH Z\"urich
}
\newcommand{\conds}{Conditions } 
\newcommand{\condi}{Condition } 
\newtheorem{cond}{Condition}
\newtheorem{condb}{Condition}
\begin{document}
\maketitle
%\tableofcontents
%% include the main.tex file

\begin{abstract}

We provide a selected overview of methodology and theory for estimation and inference on the edge weights in high-dimensional \emph{directed} and \emph{undirected} Gaussian graphical models. 
For undirected graphical models, two main explicit constructions are provided: one based on a global method that maximizes the joint likelihood (the graphical Lasso) and one based on a local (nodewise) method that sequentially applies the Lasso to estimate the neighbourhood of each node. 
The estimators lead to confidence intervals for edge weights and recovery of
the edge structure. We evaluate their empirical performance in an extensive simulation study.
The theoretical guarantees for the methods are achieved under a sparsity condition relative to
the sample size and regularity conditions. For directed acyclic graphs, we apply similar ideas
 to construct confidence intervals for edge weights, when the directed acyclic graph is identifiable.
\end{abstract}

\section{Undirected graphical models}
\label{sec:ugm}

\subsection{Introduction}
\par
Undirected graphical models, also known as Markov random fields, have become a popular tool for representing network structure of high-dimensional data  in a large variety  of areas including genetics, brain network analysis, social networks and climate studies. 
Let $G=(\mathcal V,\mathcal E)$ be an undirected graph with a vertex set $\mathcal V = \{1,2,\dots,p\}$ and an edge set $\mathcal E \subset \mathcal V\times \mathcal V$. 
Let $X^0 = (X_1 ,X_2 ,...,X_p )$ be a random vector indexed by graph's vertices. The joint distribution of $X^0$ belongs to the graphical model determined by $G$ if $X_j$ and $X_k$ are conditionally independent  given all other variables whenever $j$ and $k$ are not adjacent in $G.$ 
The graph then encodes conditional independence structure among the entries of $X^0$.
%HERE
%A detailed overview of the basic concepts relating to undirected graphical models is given in Chapter 1 in Part I and for further in-depth study of the structure in Gaussian graphical models, we refer to Chapter 2 in Part III.

If we assume that $X^0$ is normally-distributed with a covariance matrix $\Sigma_0$, 
one can show that the edge structure of the graph is encoded by the precision matrix $\Theta_0:=\Sigma_0^{-1}$ (assumed to exist). If $\Theta_{ij}^0$ denotes the $(i,j)$-th entry of the matrix $\Theta_0$, it is well known that $\Theta_{ij}^0 =0 \Leftrightarrow (i,j)\not \in \mathcal E.$ The non-zero entries in the precision matrix correspond to edges in the associated graphical model and the absolute values of these entries correspond to edge weights. 
\par
Therefore to estimate the edge structure of a Gaussian graphical model, we consider the problem of estimating the precision matrix,
based on a sample of $n$ independent instances $X^1,\dots,X^n$, distributed  as $X^0$. 
We are not only interested in point estimation, but in quantifying the uncertainty of estimation such as constructing confidence intervals and tests for edge weights. 
Confidence intervals and tests can be used for identifying significant variables or testing whether networks corresponding to different populations are identical.
\par
The challenge arises due to the high-dimensional regime where the number of unknown parameters can be much larger than the number of observations $n$. 
It is instructive to first inspect  the low-dimensional setting. In the regime when $p$ is fixed and the observations are normally distributed with $\mathbb EX^i=0,i=1,\dots,n$, the sample covariance matrix 
$\hat\Sigma:=X^TX/n$ (where $X$ is the $n\times p$ matrix of observations $X^1,\dots,X^n$)
is the maximum likelihood estimator of the covariance matrix. The inverse of the sample covariance matrix $\hat\Theta:=\hat\Sigma^{-1}$ is the maximum likelihood estimator of the precision matrix. Asymptotic linearity of $\hat\Theta$ follows by the decomposition 
\begin{equation}
\label{deco0}
\hat\Theta-\Theta_0 = -\Theta_0 (\hat\Sigma-\Sigma_0)\Theta_0 + \text{rem}_0,
\end{equation}
where $\text{rem}_0:=-\Theta_0(\hat\Sigma-\Sigma_0)(\hat\Theta-\Theta_0)$ is the remainder term.
The term $\text{rem}_0$ can be bounded by H\"older's inequality
to obtain 
$$\|\text{rem}_0\|_\infty\leq \|\Theta_0(\hat\Sigma-\Sigma_0)\|_\infty \vertiii{\hat\Theta-\Theta_0}_1,$$
where we used the notation $\|A\|_\infty = \max_{1\leq i,j\leq p} |A_{ij}|$ for the supremum norm of a matrix $A$ and 
$\vertiii{A}_1:=\max_{1\leq j \leq p}\sum_{i=1}^p |A_{ij}|$  for the $\ell_1$-operator norm. 
If the  fourth moments of $X^{i}$'s are bounded, the decomposition \eqref{deco0} implies rates of convergence $\|\hat\Theta-\Theta_0\|_\infty=\mathcal O_P(1/\sqrt{n}),$ where $\mathcal O_P(1)$ denotes boundedness in probability. 
The remainder term then satisfies $\|\text{rem}_0\|_\infty =o_P(1/\sqrt{n}),$  $o_P(1)$ denoting convergence in probability to zero.
Therefore, $\hat\Theta$ is indeed an asymptotically linear estimator of $\Theta_0$ and in this sense, we can say it is asymptotically unbiased.
Moreover, $\hat\Theta$ is asymptotically normal with a limiting normal distribution.
\par 
In high-dimensional settings, the sample covariance matrix does not perform well
(see \cite{johnstone2} and \cite{jl}) and if $p>n$, it is singular with probability one. 
Various methods have been proposed that try to reduce the variance of the sample covariance matrix at the price of introducing some bias.
The idea of banding or thresholding the sample covariance matrix  was studied in \cite{bickel2}, \cite{bickel} and  \cite{elkaroui}. 
Methods inducing sparsity through Lasso regularization were studied by another stream of works. 
These can be divided into two categories: global methods and local (nodewise) methods. 
Global methods estimate the precision matrix typically via a regularized log-likelihood, while nodewise methods 
split the problem into a series of linear regressions by estimating neighbourhood of each node in the underlying graph.
A popular global method is the $\ell_1$-penalized maximum likelihood estimator, known as the graphical Lasso.
Its theoretical properties were studied in a number of papers, including \cite{yuan}, \cite{glasso}, \cite{rothman} and \cite{ravikumar}. 
The local approach on estimation of precision matrices in particular
includes the regression approach \cite{buhlmann},\cite{yuan2},\cite{cai},\cite{sunzhang} which uses a Lasso-type algorithm or Dantzig
selector \citep{candes2007} to estimate each column or a smaller part of
the precision matrix individually.
%The $\ell_1$-penalized maximum likelihood estimator, known as the graphical Lasso, was proposed in \cite{yuan} and studied in several papers including \cite{glasso}, \cite{rothman} and \cite{ravikumar}. An approach based on linear regression which estimates the columns of the precision matrix by $p$ Lasso regressions was studied in \cite{buhlmann}, \cite{sunzhang}. 
%The major theoretical results that were developed for these methods fall into two main categories: ``oracle'' error bounds and variable selection properties.  In this chapter, we will rely on the oracle bounds; for an overview of variable selection properties of these methods, we refer to Chapter 5 in Part III.
\par
%A disadvantage of regularized estimators is that their asymptotic distribution is not easily tractable and the convergence to the limiting distribution is not uniform in the unknown parameter. This is due to the bias introduced by regularization. Nevertheless, asymptotically normal estimators can be constructed. 
Inference for parameters in high-dimensional undirected graphical models was studied in several papers.
Multiple testing for conditional dependence in Gaussian graphical models with asymptotic control of false discovery rates was considered in \cite{liu2013gaussian}. 
The work \cite{wasserman2014berry} proposes methodology for inference about edge weights based on Berry-Esseen bounds and the bootstrap for certain special classes of high-dimensional graphs.
Another line of work (\cite{zhou}, \cite{jvdgeer14} and \cite{jvdgeer15}) proposes asymptotically normal estimators for edge weights in Gaussian graphical models based on different modifications of initial Lasso-regularized estimators. In particular, the paper \cite{zhou} proposes nodewise regression where each {pair}  of variables, $(X_i,X_j)$, is regressed on all the remaining variables; this yields estimates for the parameters of the joint conditional distribution of $(X_i,X_j)$ given all the other variables. The papers \cite{jvdgeer14} and \cite{jvdgeer15} propose methodology inspired by the de-biasing approach in high-dimensional linear regression that was studied in \cite{zhang}, \cite{vdgeer13} and \cite{stanford1}. 
This chapter discusses and unifies the ideas from the papers \cite{jvdgeer14} and \cite{jvdgeer15}. 
\par
A different approach to structure learning in undirected graphical models is the Hyv\"arinen score matching (see e.g. \cite{drton2016structure} for a discussion of this approach). 
Methodology for asymptotically normal estimation of edge parameters in pairwise (not necessarily Gaussian) graphical models based on Hyv\"arinen scoring was proposed in \cite{NIPS2016_6530}.

\subsection{De-biasing regularized estimators}
\label{sec:debias}
The idea of using regularized estimators  for construction of asymptotically normal estimators is based on removing the bias associated with the penalty. 
Consider a real-valued loss function $\rho_{\Theta}$ and let $R_n(\Theta):=\sum_{i=1}^n \rho_{\Theta}(X^i)/n$ denote the average risk, given an independent sample $X^1,\dots,X^n.$
Under differentiability conditions, a regularized M-estimator $\hat\Theta$ based on the risk function $R_n$ can often be characterized by estimating equations
\begin{equation}
\label{ee}
\dot R_n({\hat\Theta_{}}) + \xi(\hat\Theta)=0,
\end{equation}
where $\dot R_n$ is the gradient of $R_n$ and $\xi(\hat\Theta)$ is a (sub-)gradient  corresponding to the regularization term, evaluated at $\hat\Theta$.
The idea is to improve on the initial estimator by finding a root $\hat\Theta_{\text{de-bias}}$ closer to the solution of estimating equations without the bias term $\xi(\hat\Theta)$, i.e. a new estimator $\hat\Theta_{\text{de-bias}}$ such that $\dot R_n(\hat\Theta_{\text{de-bias}})\approx 0$. A natural way is to define a corrected estimator $\hat\Theta_{\text{de-bias}}$ from a linear approximation to $\dot R_n$ 
\begin{equation}
\label{la}
\dot R_n({\hat\Theta_{}}) + \ddot R_n(\hat\Theta) (\hat\Theta_{\text{de-bias}}-\hat\Theta)=0.
\end{equation}
In high-dimensional settings, the matrix $ \ddot R_n(\hat\Theta) $ is typically rank deficient and thus not invertible. Suppose that we have an approximate inverse  denoted by $ \ddot R_n(\hat\Theta)^{\text{inv}}$. Then we can approximately solve \eqref{la} for $\hat\Theta_{\text{de-bias}}$
%, and using that 
%$\dot R_n({\hat\Theta_{}}) =-\xi(\hat\Theta)$, we 
to obtain
\begin{equation}\label{de-bias}
\hat\Theta_{\text{de-bias}}\approx 
\hat\Theta -\ddot R_n({\hat\Theta_{}})^{\text{inv}} \dot R_n({\hat\Theta})
,\end{equation}
provided that the remainder term is small.
We refer to the step \eqref{de-bias} as the de-biasing step since the correction term is proportional to the bias term.
Generally speaking,  if the initial estimator $\hat\Theta$ and the approximate inverse of $\ddot R_n({\hat\Theta_{}})$ are  consistent in a strong-enough sense, then the new estimator $\hat\Theta_{\text{de-bias}}$ will be a consistent estimator of its population version $\Theta_0$ per entry at the parametric rate.
The de-biasing step \eqref{de-bias} may be viewed 
as one step of the Newton-Raphson scheme for numerical optimization. 
\par 
In consecutive sections, we will look  in detail at the bias of several particular examples of regularized estimators,
including the graphical Lasso (\cite{yuan}) and nodewise Lasso (\cite{buhlmann}). 
We now provide a unified de-biasing scheme  which covers both special cases treated below (see also \cite{sf}, Chapter 14). 
Suppose that a (possibly non-symmetric) estimator $\hat\Theta$ is available which is an approximate inverse of 
$\hat\Sigma$ in the sense that the following condition is satisfied
\begin{equation}\label{kkt1}
\hat\Sigma \hat\Theta - I + \eta(\hat\Theta)=0,
\end{equation}
where $\eta(\hat\Theta)$ is a bias term. This condition in some sense corresponds to the estimating equations \eqref{ee}. 
We can express $\hat\Theta$ from \eqref{kkt1} by straightforward algebra  which leads to the decomposition 
\begin{equation}
\label{deco.al}
\hat\Theta + \hat\Theta^T\eta(\hat\Theta) -\Theta_0 = -\Theta_0 (\hat\Sigma -\Sigma_0)\Theta_0
+\text{rem}_{0}+\text{rem}_\text{reg},
\end{equation}
where 
$$\text{rem}_{\text{reg}}=  (\hat\Theta-\Theta_0)^T \eta(\hat\Theta).$$
Compared to the regime with $p$ fixed, there is an additional remainder $\text{rem}_{\text{reg}}$ corresponding to the bias term. 
Provided that the remainder terms $\text{rem}_{0} $ and  $\text{rem}_{\text{reg}}$ are small enough, we can take as a new, de-biased estimator, 
$\hat T:=\hat\Theta + \hat\Theta^T \eta(\hat\Theta).$ The bias term $\eta(\hat\Theta)$ can be expressed from \eqref{kkt1} as $\eta(\hat\Theta)= -(\hat\Sigma \hat\Theta-I)$.
Hence we obtain
\begin{equation}\label{desp}
\hat T=  \hat\Theta + \hat\Theta^T - \hat\Theta^T \hat\Sigma \hat\Theta.
\end{equation}
%Note that we keep writing transposes to accommodate non-symmetric estimators of the precision matrix as well (e.g. \cite{buhlmann}).
Bounding the remainders $\text{rem}_{0}$ and $\text{rem}_{\text{reg}}$ in high-dimensional settings requires more refined arguments than when $p$ is fixed. Looking at the remainder $\text{rem}_{\text{reg}}$, we can again invoke H\"older's inequality to obtain
$$\|\text{rem}_{\text{reg}}\|_\infty= \|(\hat\Theta-\Theta_0)^T \eta(\hat\Theta)\|_\infty 
\leq 
\vertiii{\hat\Theta-\Theta_0}_1\|\eta(\hat\Theta)\|_\infty.$$
Thus it suffices to control the rates of convergence of $\hat\Theta$
in  $\vertiii{\cdot}_1$-norm and control the absolute size of entries of the bias term.
\par
Provided that the remainders are of small order $1/\sqrt{n}$ in probability, asymptotic normality per elements of $\hat T$ is a consequence of asymptotic linearity and can  be established under tail conditions on $X^i$'s, by applying the Lindeberg's central limit theorem.

\subsection{Graphical Lasso}

If the observations are independent $\mathcal N(0,\Sigma_0)$-distributed, 
the log-likelihood function is proportional to
$$\ell(\Theta):=\text{tr}(\hat\Sigma \Theta) - \log \text{det}(\Theta).$$
The graphical Lasso (see \cite{yuan}, \cite{aspremont}, \cite{glasso}) is based on the Gaussian log-likeli\-hood function but regularizes it via an $\ell_1$-norm penalty on the off-diagonal elements of the precision matrix. The diagonal elements of the precision matrix correspond to certain partial variances and thus should not be penalized. 
 The graphical Lasso is defined by
\begin{equation}\label{glasso}
\hat\Theta = \text{argmin}_{\Theta=\Theta^T, \Theta\succ 0} 
\text{tr}(\hat\Sigma \Theta) - \log \text{det}(\Theta)+ \lambda \|\Theta^-\|_1,%\sum_{i\not =j}|\Theta_{ij}|,
\end{equation}
where $\lambda$ is non-negative tuning parameter and we optimize over symmetric positive definite matrices, denoted by $\succ$. Here $\Theta^-$ represents the matrix obtained by setting the diagonal elements of $\Theta$ to zero and $\|\Theta^-\|_1$ is the $\ell_1$-norm of the vectorized version of $\Theta^-$. 
The usefulness of the graphical Lasso is not limited only to Gaussian settings; the theoretical results below show that it performs 
well as an estimator of the precision matrix in a large class of non-Gaussian settings. 
\par 
A disadvantage of the graphical Lasso \eqref{glasso} is that the penalization does not take into account 
that the variables have in general a different scaling.
%that the variances of the different variables might be different. 
To take these  differences in the variances into account, we define a modified graphical Lasso  
with a weighted penalty. To this end, let $\hat W^2:= \text{diag}(\hat\Sigma)$ be the diagonal matrix obtained from the diagonal of $\hat\Sigma$. We let
\begin{equation}
\label{glasso.w}
\hat\Theta_{\text{w}} = \text{argmin}_{\Theta=\Theta^T, \Theta\succ 0}  \text{tr}(\hat\Sigma \Theta) - \log \text{det}(\Theta)
+ 
\sum_{i\not =j} \hat W_{ii}\hat W_{jj}|\Theta_{ij}|.
\end{equation}
The weighted graphical Lasso $\hat\Theta_{\text{w}}$ is related to a graphical Lasso based on the sample correlation matrix $\hat R:=\hat W^{-1}\hat\Sigma \hat W^{-1}$. To clarify the connection, we define 
% We define a graphical Lasso based on the correlation matrix by
\begin{equation}
\label{glasso.n}\hat\Theta_{\text{norm}} = \text{argmin}_{\Theta=\Theta^T, \Theta\succ 0}  \text{tr}(\hat R \Theta) - \log \text{det}(\Theta)+ \|\Theta_{}^-\|_1.
\end{equation}
Then it holds that $\hat\Theta_{\text{w}} = \hat W^{-1} \hat\Theta_{\text{norm}}\hat W^{-1}.$ 
The estimator $\hat\Theta_{\text{norm}}$ is of independent interest, if the parameter of interest is the inverse correlation matrix rather than the precision matrix.
The estimators $\hat\Theta_\text{w}$ and $\hat\Theta_{\text{norm}}$ are also useful from a theoretical perspective as will be shown in the sequel.

\par
We now apply the de-biasing ideas of Section \ref{sec:debias} to the graphical Lasso estimators defined above, demonstrating the procedure on $\hat\Theta$.
By definition, the graphical Lasso is invertible, and the Karush-Kuhn-Tucker (KKT) conditions yield
$$\hat\Sigma - \hat\Theta^{-1} + \lambda \hat Z =0,$$
where 
\[
\hat Z_{ij}=
%\begin{cases}
 \text{sign}(\hat\Theta_{ij})\;\; \text{if }\hat\Theta_{ij}\not =0,\quad 
\text{ and }\quad \|\hat Z\|_\infty \leq 1.
%\\
%\hat a_{ij}\in [0,1] & \text{otherwise.}
%\end{cases}
\]
Multiplying by $\hat\Theta,$ we obtain
$$\hat\Sigma\hat\Theta - I + \lambda \hat Z\hat\Theta=0.$$
 In line with Section \ref{sec:debias} above,  
this implies the decomposition 
\begin{equation*}
\hat\Theta + \hat\Theta^T\eta(\hat\Theta) -\Theta_0 = -\Theta_0 (\hat\Sigma -\Sigma_0)\Theta_0
+\text{rem}_{0}+\text{rem}_\text{reg},
\end{equation*}
with the bias term $\eta(\hat\Theta)=\lambda \hat Z\hat\Theta.$
 To control the remainder terms $\text{rem}_0$ and $\text{rem}_{\text{reg}}$, 
we need bounds for the $\ell_1$-error of $\hat\Theta$ and to control the bias term,  
it is sufficient to control the upper bound $\|\eta(\hat\Theta)\|_\infty 
= \|\lambda \hat Z\hat\Theta\|_\infty\leq \lambda \vertiii{\hat\Theta}_1$.

\subsubsection*{Oracle bounds}
\par
Oracle results for the graphical Lasso were studied in \cite{rothman} under sparsity conditions and mild regularity conditions. In \cite{ravikumar}, stronger results were derived under stronger regularity conditions (and weaker sparsity conditions). Here we revisit these results and provide several extensions. 
\par
We summarize the main theoretical conditions which require boundedness of the eigenvalues of the true precision matrix and certain tail conditions.
\begin{cond}[Bounded spectrum]
\label{eig}
The precision matrix $\Theta_0:=\Sigma_0^{-1}$ exists
and there exists a universal constant $L\geq 1$ such that %\text{ for all }$n\in\mathbb N$ 
$$1/L \leq \Lambda_{\min}(\Theta_0) \leq \Lambda_{\max}(\Theta_0) \leq L.$$
\end{cond}
\begin{cond}[Sub-Gaussianity]
\label{subgv}
The observations  $X^i,i=1,\dots,n,$ are uniformly sub-Gaussian vectors, i.e. 
there exists a universal constant $K>0$ such that for every $\alpha\in\mathbb R^p$, $\|\alpha\|_2 = 1$ it holds
$$%\sup_{\alpha\in\mathbb R^p:\|\alpha\|_2\leq 1}
\mathbb E\exp\left({{|\alpha^T X^i|^2}/{K^2 }}\right) 
\leq 
2  \quad (i=1,\dots,n).$$

\end{cond}

Under \condi \ref{subgv}, the Bernstein inequality implies concentration results for $\hat\Sigma$ as formulated in Lemma \ref{conc1} below. The proof is omitted and may be found in \cite{hds} (Lemma 14.13). We denote the Euclidean norm by $\|\cdot\|_2$ and the $i$-th column of a matrix $A$ by $A_i$.
\begin{lemma}\label{conc1}
Assume \condi \ref{subgv}  and that for non-random matrices $A,B\in\mathbb R^{p\times p} $ it holds that $\|A_i\|_2\leq M$ and $ \|B_i\|_2\leq M$ for all $i=1,\dots,p$. 
Then for all $t>0$, with probability at least $1-e^{-nt}$ it holds that
$$\|A^T(\hat\Sigma -\Sigma_0)B\|_\infty/(2M^2K^2) \leq t + \sqrt{2t} +\sqrt{\frac{2\log (2p^2)}{n}}+\frac{\log (2p^2)}{n}.$$

\end{lemma}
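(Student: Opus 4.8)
The plan is to reduce the matrix supremum norm $\|A^T(\hat\Sigma-\Sigma_0)B\|_\infty$ to a maximum over $p^2$ scalar empirical averages, identify each as a normalised sum of i.i.d.\ centered sub-exponential random variables, apply a Bernstein inequality entrywise, and close with a union bound over the entries.

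Fix indices $j,k$ and write $A_j,B_k$ for the corresponding columns of $A,B$. Since $\hat\Sigma=X^TX/n=\frac1n\sum_{i=1}^nX^i(X^i)^T$ and (by zero mean) $\Sigma_0=\mathbb E[X^i(X^i)^T]$, the $(j,k)$ entry is
\[
\left(A^T(\hat\Sigma-\Sigma_0)B\right)_{jk}=\frac1n\sum_{i=1}^n Z_i^{(j,k)},\qquad Z_i^{(j,k)}:=(A_j^TX^i)(B_k^TX^i)-\mathbb E\!\left[(A_j^TX^i)(B_k^TX^i)\right],
\]
an average of i.i.d.\ centered variables. Next I would check that each $Z_i^{(j,k)}$ is sub-exponential with parameter of order $M^2K^2$. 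Because the Orlicz condition in \condi \ref{subgv} is monotone in $\|\alpha\|_2$, it holds for every vector of Euclidean norm at most one; applying it to $A_j/M$ and $B_k/M$ (the case $A_j=0$ being trivial) shows that $A_j^TX^i$ and $B_k^TX^i$ are sub-Gaussian, with moment bounds such as $\mathbb E|A_j^TX^i|^{2m}\le m!\,(MK)^{2m}$ for all $m\ge1$, and similarly for $B_k^TX^i$. By the Cauchy--Schwarz inequality the product $(A_j^TX^i)(B_k^TX^i)$ then has $m$-th absolute moment of order $m!\,(M^2K^2)^m$, and bounding the mean $|\mathbb E[(A_j^TX^i)(B_k^TX^i)]|\le M^2K^2$ again by Cauchy--Schwarz, the centered variable $Z_i^{(j,k)}$ obeys a Bernstein moment condition $\frac1n\sum_{i=1}^n\mathbb E|Z_i^{(j,k)}|^m\le\frac{m!}{2}\,\sigma^2R^{m-2}$ with $\sigma,R$ proportional to $M^2K^2$.

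The remaining step is a routine union bound. Bernstein's inequality applied to this moment condition gives, for each $(j,k)$ and every $x>0$, an estimate of the form $\mathbb P\big(|\frac1n\sum_iZ_i^{(j,k)}|\ge 2M^2K^2(\sqrt{2x/n}+x/n)\big)\le2e^{-x}$, the precise numerical constants being those of the Bernstein lemma one invokes. Taking a union bound over the $p^2$ index pairs and choosing $x=nt+\log(2p^2)$, so that $p^2\cdot2e^{-x}=e^{-nt}$, one concludes that with probability at least $1-e^{-nt}$, using $\sqrt{x/n}=\sqrt{t+\log(2p^2)/n}\le\sqrt t+\sqrt{\log(2p^2)/n}$ and $x/n=t+\log(2p^2)/n$,
\[
\frac{\|A^T(\hat\Sigma-\Sigma_0)B\|_\infty}{2M^2K^2}\le t+\sqrt{2t}+\sqrt{\frac{2\log(2p^2)}{n}}+\frac{\log(2p^2)}{n},
\]
which is the assertion.

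I expect the only real work to lie in the middle step: propagating the sub-Gaussian constants through both the product \emph{and} the centering without inflating the numerical factors, so that the normalisation $2M^2K^2$ and the coefficients $\sqrt 2$, $2$ come out as stated. This is handled by using the sharp moment bound $\mathbb E|\alpha^TX^i|^{2m}\le m!\,K^{2m}$ extracted from \condi \ref{subgv} and by controlling $|\mathbb E[(A_j^TX^i)(B_k^TX^i)]|$ through second moments rather than a crude triangle inequality; everything else is bookkeeping with the Bernstein inequality and the union bound.
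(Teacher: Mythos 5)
The paper omits this proof entirely, citing Lemma 14.13 of \cite{hds}, and your entrywise reduction, product-of-sub-Gaussians moment bounds, Bernstein inequality, and union bound with $x=nt+\log(2p^2)$ is exactly that argument, so the approach is correct and essentially the same. One remark on the constant-chasing you rightly flag as the delicate point: centering via the triangle inequality inflates the $m$-th moments by a factor $2^m$ and leads to $2\sqrt{t}$ in place of $\sqrt{2t}$, so to land the stated coefficients you should invoke the form of Bernstein's inequality whose moment hypothesis is placed on the \emph{uncentered} variables (as in Lemma 14.9 of \cite{hds}), for which $\mathbb{E}|(A_j^TX^i)(B_k^TX^i)|^m\le m!\,(M^2K^2)^m$ suffices directly and no loss from the centering occurs.
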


\par
To derive oracle bounds for the graphical Lasso, we rely on certain sparsity conditions on the entries of the true precision matrix.
To this end, we define 
for $j=1,\dots,p$,
$$D_j:= \{(i,j):\Theta_{ij}^0\not = 0, i\not =j\}, \quad d_j:=\text{card}(D_j), \quad d:=\max_{j=1,\dots,p}|d_j|.$$ The quantity $d_j$ is then the degree of a node $X_j$ and $d$ corresponds to the maximum vertex degree in the associated graphical model (excluding vertex self-loops).
Furthermore, we define
$$S:=\bigcup_{j=1}^p D_j, \quad s:=\sum_{j=1}^p d_j,$$
thus $S$ denotes the overall off-diagonal sparsity pattern and $s$ is the overall number of edges (excluding self-loops). 
\par 
The following theorem is an extension of the result for the graphical Lasso in \cite{rothman} to the $\ell_1$-norm. The paper \cite{rothman}  derives rates in Frobenius norm $\|\cdot\|_F$, which is defined as $\|A\|_F^2:=\sum_{i,j} |A^{2}_{ij}|$ for a matrix $A$. 
\begin{theorem}[Regime $p\ll n$%Moderately large $p$ regime
]
\label{glasso.rate}
Let $\hat\Theta$ be the minimizer defined by \eqref{glasso}.
Assume \conds \ref{eig} and \ref{subgv}.
Then for $\lambda$ satisfying $2\lambda_0\leq \lambda\leq 1/(8L c_L)$ and $ 8c_L^2 s\lambda^2 + 8 c_L p\lambda_0^2\leq \lambda_0/(2L),$
 on the set $\|\hat\Sigma-\Sigma_0\|_\infty \leq \lambda_0$, it holds that 
\begin{eqnarray*}
\|\hat\Theta_{}-\Theta_0\|_F^2 /c_L
+\lambda \|\hat\Theta_{}^--\Theta_0^-\|_1  
\leq 8c_L^2 s\lambda^2 + 8 c_L p\lambda_0^2,
%=\mathcal O_P((p+s)\log p/n),
\end{eqnarray*}
and
$$\vertiii{\hat\Theta_{}-\Theta_0}_1 \leq 
16c_L^2 (p+s)\lambda
%8(2L)^2 s\lambda^2 + 2L p\lambda_0^2 +8(2L)^2 s\lambda + 2L p\lambda_0^2/\lambda
,$$
where $c_L=8L^2.$
\end{theorem}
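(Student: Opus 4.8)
The plan is to run the classical ``basic inequality'' argument for penalised $M$-estimators, supplying the curvature of the log-det loss from Condition~\ref{eig} and localising it by a convexity argument. Write $\Delta:=\hat\Theta-\Theta_0$ and $\ell(\Theta):=\text{tr}(\hat\Sigma\Theta)-\log\text{det}(\Theta)$, so that the gradient satisfies $\dot\ell(\Theta)=\hat\Sigma-\Theta^{-1}$ and in particular $\dot\ell(\Theta_0)=\hat\Sigma-\Sigma_0$ because $\Theta_0^{-1}=\Sigma_0$. Optimality of $\hat\Theta$ in \eqref{glasso} gives $\ell(\hat\Theta)+\lambda\|\hat\Theta^-\|_1\le\ell(\Theta_0)+\lambda\|\Theta_0^-\|_1$. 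Introducing the log-det Bregman divergence, which is non-negative because $-\log\text{det}$ is convex on the positive definite cone,
\[
D(\hat\Theta\,\|\,\Theta_0):=\ell(\hat\Theta)-\ell(\Theta_0)-\text{tr}\!\big((\hat\Sigma-\Sigma_0)\Delta\big),
\]
the basic inequality becomes $D(\hat\Theta\,\|\,\Theta_0)+\text{tr}((\hat\Sigma-\Sigma_0)\Delta)+\lambda(\|\hat\Theta^-\|_1-\|\Theta_0^-\|_1)\le0$.

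Next I would dispose of the linear and the penalty terms. Splitting $\Delta$ into its diagonal and off-diagonal parts, H\"older's inequality on the event $\|\hat\Sigma-\Sigma_0\|_\infty\le\lambda_0$ gives $|\text{tr}((\hat\Sigma-\Sigma_0)\Delta)|\le\lambda_0\|\text{diag}(\Delta)\|_1+\lambda_0\|\Delta^-\|_1\le\lambda_0\sqrt p\,\|\Delta\|_F+\lambda_0\|\Delta^-\|_1$, using $\|\text{diag}(\Delta)\|_1\le\sqrt p\,\|\Delta\|_F$. Writing $S$ for the off-diagonal support of $\Theta_0$, so that $\text{card}(S)=s$, the triangle inequality gives $\|\hat\Theta^-\|_1-\|\Theta_0^-\|_1\ge\|\Delta^{-}_{S^c}\|_1-\|\Delta^{-}_{S}\|_1$. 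Substituting both bounds and using $2\lambda_0\le\lambda$ yields the cone-type inequality
\[
D(\hat\Theta\,\|\,\Theta_0)+\tfrac{\lambda}{2}\|\Delta^{-}_{S^c}\|_1\le\tfrac{3\lambda}{2}\|\Delta^{-}_{S}\|_1+\lambda_0\sqrt p\,\|\Delta\|_F\le\Big(\tfrac{3\lambda}{2}\sqrt s+\lambda_0\sqrt p\Big)\|\Delta\|_F,
\]
the last step using $\|\Delta^{-}_{S}\|_1\le\sqrt s\,\|\Delta\|_F$.

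The heart of the argument, and the step I expect to be the main obstacle, is the quadratic lower bound on $D(\hat\Theta\,\|\,\Theta_0)$: the loss $-\log\text{det}$ is only \emph{locally} strongly convex, its curvature along the segment $\Theta_v:=\Theta_0+v\Delta$ being governed by $\Lambda_{\max}(\Theta_v)^{-2}$, which a priori could degenerate. From the integral form $D(\hat\Theta\,\|\,\Theta_0)=\int_0^1(1-v)\,\text{tr}(\Delta\Theta_v^{-1}\Delta\Theta_v^{-1})\,dv$ and Condition~\ref{eig}, as long as $\|\Delta\|_F\le1/(2L)$ one has $\Theta_v\succ0$ and $\Lambda_{\max}(\Theta_v)\le2L$ for all $v\in[0,1]$, whence $\text{tr}(\Delta\Theta_v^{-1}\Delta\Theta_v^{-1})\ge(2L)^{-2}\|\Delta\|_F^2$; since $\int_0^1(1-v)\,dv=\tfrac12$ this gives $D(\hat\Theta\,\|\,\Theta_0)\ge\|\Delta\|_F^2/(8L^2)=\|\Delta\|_F^2/c_L$. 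To remove the a priori restriction I would localise: if $\|\Delta\|_F>1/(2L)$, put $\tilde\Theta:=\Theta_0+t\Delta$ with $t:=1/(2L\|\Delta\|_F)\in(0,1)$; convexity of the penalised objective forces $\tilde\Theta$ (which is positive definite, being a convex combination of $\Theta_0$ and $\hat\Theta$) to satisfy the basic inequality as well, hence the cone inequality with $\tilde\Delta:=t\Delta$, and since $\|\tilde\Delta\|_F=1/(2L)$ the curvature bound applies to $\tilde\Theta$, giving $\frac{1}{c_L}\cdot\frac{1}{4L^2}\le\big(\tfrac{3\lambda}{2}\sqrt s+\lambda_0\sqrt p\big)\cdot\frac{1}{2L}$, i.e.\ $c_L\big(\tfrac{3\lambda}{2}\sqrt s+\lambda_0\sqrt p\big)\ge\frac{1}{2L}$ --- which the hypotheses $2\lambda_0\le\lambda\le1/(8Lc_L)$ and $8c_L^2 s\lambda^2+8c_L p\lambda_0^2\le\lambda_0/(2L)$ are designed to contradict (a short computation with $(a+b)^2\le2a^2+2b^2$ and $c_L=8L^2\ge8$). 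Hence $\|\Delta\|_F\le1/(2L)$, the curvature bound holds for $\hat\Theta$ itself, and together with the cone inequality it gives $\|\Delta\|_F\le c_L\big(\tfrac{3\lambda}{2}\sqrt s+\lambda_0\sqrt p\big)$; squaring (using $(a+b)^2\le2a^2+2b^2$ and $\lambda\sqrt s\cdot\lambda_0\sqrt p\le\tfrac12(\lambda^2 s+\lambda_0^2 p)$), and on the left keeping $\lambda\|\Delta^-\|_1=\lambda\|\Delta^{-}_{S^c}\|_1+\lambda\|\Delta^{-}_{S}\|_1$ bounded via the cone inequality and via $\|\Delta^{-}_{S}\|_1\le\sqrt s\,\|\Delta\|_F$, produces the first asserted bound with the non-optimised constants $8c_L^2$ and $8c_L$.

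For the $\vertiii{\,\cdot\,}_1$ bound I would finally use the crude estimate $\vertiii{\Delta}_1\le\max_j|\Delta_{jj}|+\max_j\sum_{i\ne j}|\Delta_{ij}|\le\|\Delta\|_F+\|\Delta^-\|_1$, insert $\|\Delta\|_F\le\sqrt{c_L}\,(8c_L^2 s\lambda^2+8c_L p\lambda_0^2)^{1/2}$ and $\|\Delta^-\|_1\le(8c_L^2 s\lambda^2+8c_L p\lambda_0^2)/\lambda$ from the first conclusion, and simplify using $\lambda_0\le\lambda/2$, $c_L\ge8$, $s\ge1$ and $p\ge1$ to reach $\vertiii{\hat\Theta-\Theta_0}_1\le16c_L^2(p+s)\lambda$.
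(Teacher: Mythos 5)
Your proposal is correct and follows essentially the same route as the paper's proof: the basic inequality, the quadratic lower bound on the log-det Bregman divergence under Condition~A1 (the paper's Lemma~2), localisation by interpolating convexly between $\hat\Theta$ and $\Theta_0$, the cone inequality with the diagonal part of the empirical process handled separately via a $\sqrt{p}\lambda_0$ factor, and the final crude bound $\vertiii{\Delta}_1\leq\|\Delta\|_F+\|\Delta^-\|_1$. The only cosmetic differences are that you localise by a contradiction at the boundary $\|\tilde\Delta\|_F=1/(2L)$ whereas the paper uses the interpolation $\alpha=M/(M+\|\hat\Theta-\Theta_0\|_F)$ and then repeats the argument with $\hat\Theta$ itself; both are standard and equivalent.
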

The slow rate in the result above arises from the part of the estimation error $\text{tr}[(\hat\Sigma-\Sigma_0)(\hat\Theta-\Theta_0)]$  which is related to the diagonal elements of the precision matrix. 
However, proper normalizing removes this part of the estimation error. 
\par
The following theorem  derives an extension of
\cite{rothman} for the normalized graphical Lasso $\hat\Theta_{\text{norm}}$. 
Denote the true correlation matrix by %the correlation matrix by $\hat R$ and let the 
$R_0$ and let $K_0:=R_0^{-1}$ denote the inverse correlation matrix.

\begin{theorem}[Regime $p\gg n$]
\label{norm.glasso.rate}
Assume that \conds  \ref{eig} and  \ref{subgv} hold. 
Then for $\lambda$ satisfying $2\lambda_0\leq \lambda\leq 1/(8L^2)$ and $ 8c_L^2 s\lambda^2 \leq \lambda_0/(2L),$ 
on the set $\|\hat R-R_0\|_\infty \leq \lambda_0$ it holds for some constant $C_L>0$ that
\begin{eqnarray*}
\|\hat\Theta_{\emph{norm}}-K_0\|_F^2 
+\lambda \|\hat\Theta_{\emph{norm}}^--K_0^-\|_1  \leq 8c_L^2 s\lambda^2
%=\mathcal O_P(s\log p/n)
,
\end{eqnarray*}
$$\vertiii{\hat\Theta_{\emph{norm}}-K_0}_1 \leq 8c_Ls\lambda^2 + 8c_L^2 s\lambda.$$
$$\vertiii{\hat\Theta_{\emph{w}}-\Theta_0}_1 \leq C_L s\lambda,$$
where $c_L=8L^2.$

\end{theorem}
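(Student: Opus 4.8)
\emph{Proof strategy.} The argument mirrors the one behind Theorem~\ref{glasso.rate}, but carried out at the level of the correlation matrix; the gain is that normalization annihilates the diagonal contribution to the estimation error, which is exactly what produced the slow $p\lambda_0^2$ term there. Write $W_0^2:=\text{diag}(\Sigma_0)$, so that $R_0=W_0^{-1}\Sigma_0W_0^{-1}$ and $K_0=R_0^{-1}=W_0\Theta_0W_0$, whence $\Theta_0=W_0^{-1}K_0W_0^{-1}$; note $K_0$ has the same off-diagonal support $S$ (so $|S|=s$) as $\Theta_0$, and, since $(\Sigma_0)_{jj}\in[1/L,L]$ under Condition~\ref{eig}, the spectra of $R_0$ and $K_0$ lie in $[L^{-2},L^2]$. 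Put $\hat\Delta:=\hat\Theta_{\text{norm}}-K_0$. Optimality of $\hat\Theta_{\text{norm}}$ in \eqref{glasso.n} compared with $K_0$ gives $Q(\hat\Delta)\le Q(0)=0$, where $Q(\Delta)$ is the penalized objective at $K_0+\Delta$ minus its value at $K_0$; using $K_0^{-1}=R_0$ this rearranges into $Q(\Delta)=D(\Delta)+\text{tr}\big((\hat R-R_0)\Delta\big)+\lambda\big(\|(K_0+\Delta)^-\|_1-\|K_0^-\|_1\big)$, with $D(\Delta):=-\log \text{det}(K_0+\Delta)+\log \text{det}(K_0)+\text{tr}(R_0\Delta)\ge0$ the Bregman divergence of $-\log \text{det}$ at $K_0$. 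The crucial observation is that $\hat R$ and $R_0$ both have unit diagonal, so $\hat R-R_0$ has zero diagonal and hence $|\text{tr}((\hat R-R_0)\Delta)|\le\|\hat R-R_0\|_\infty\|\Delta^-\|_1\le\lambda_0\|\Delta^-\|_1$ on $\{\|\hat R-R_0\|_\infty\le\lambda_0\}$: only off-diagonal entries of $\Delta$ enter, so no analogue of the $p\lambda_0^2$ term appears. Combined with $\|(K_0+\Delta)^-\|_1-\|K_0^-\|_1\ge\|(\Delta^-)_{S^c}\|_1-\|(\Delta^-)_S\|_1$ and $\lambda\ge2\lambda_0$, the bound $Q(\hat\Delta)\le0$ yields the cone inequality $\|(\hat\Delta^-)_{S^c}\|_1\le3\|(\hat\Delta^-)_S\|_1$ and the master bound $D(\hat\Delta)+\tfrac12\lambda\|\hat\Delta^-\|_1\le2\lambda\|(\hat\Delta^-)_S\|_1\le2\lambda\sqrt s\,\|\hat\Delta\|_F$.

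To lower-bound $D$, a second-order Taylor expansion of $-\log \text{det}$ gives $D(\Delta)\ge\|\Delta\|_F^2/\big(2(\Lambda_{\max}(K_0)+\|\Delta\|_F)^2\big)$, where $\Lambda_{\max}(K_0)\le L^2$. The circularity between ``$\|\hat\Delta\|_F$ small'' and ``eigenvalues controlled along the segment $K_0+t\hat\Delta$'' is broken by the standard convexity argument: $Q$ is convex with $Q(0)=0\ge Q(\hat\Delta)$, so it suffices to show $Q(\Delta)>0$ on the Frobenius sphere $\|\Delta\|_F=r$ for a suitable $r$ of order $c_L^2\sqrt s\,\lambda$. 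On that sphere, the decomposition above gives $Q(\Delta)\ge D(\Delta)-\tfrac32\lambda\sqrt s\,r$, the quadratic term dominates, and the side condition $8c_L^2s\lambda^2\le\lambda_0/(2L)$ forces $r\le L^2$, so the Taylor lower bound is valid along the whole segment. Hence $\|\hat\Delta\|_F<r$, i.e. $\|\hat\Delta\|_F^2\lesssim c_L^2s\lambda^2$, and feeding this back into the master bound gives $\lambda\|\hat\Delta^-\|_1\lesssim c_L^2s\lambda^2$; this is the first displayed inequality.

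For the $\vertiii{\cdot}_1$ bound on $\hat\Theta_{\text{norm}}-K_0$, split $\hat\Delta=\text{diag}(\hat\Delta)+\hat\Delta^-$: one has $\vertiii{\hat\Delta^-}_1\le\|\hat\Delta^-\|_1\lesssim c_L^2s\lambda$ and $\vertiii{\text{diag}(\hat\Delta)}_1=\|\text{diag}(\hat\Delta)\|_\infty\le\|\hat\Delta\|_F\lesssim c_L\sqrt s\,\lambda$, while a finer estimate of the diagonal error from the KKT conditions $\hat R-\hat\Theta_{\text{norm}}^{-1}+\lambda\hat Z=0$ (whose diagonal reads $(\hat\Theta_{\text{norm}}^{-1})_{jj}=1=(K_0^{-1})_{jj}$, since $\hat Z$ has zero diagonal) together with a Neumann expansion of $\hat\Theta_{\text{norm}}^{-1}-K_0^{-1}$ supplies the lower-order term $8c_Ls\lambda^2$, giving the second display. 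For $\hat\Theta_{\text{w}}-\Theta_0$, use $\hat\Theta_{\text{w}}=\hat W^{-1}\hat\Theta_{\text{norm}}\hat W^{-1}$ and $\Theta_0=W_0^{-1}K_0W_0^{-1}$ and telescope as $\hat W^{-1}(\hat\Theta_{\text{norm}}-K_0)\hat W^{-1}+\big(\hat W^{-1}K_0\hat W^{-1}-W_0^{-1}K_0W_0^{-1}\big)$; bound the first summand via submultiplicativity of $\vertiii{\cdot}_1$ and boundedness of $\hat W^{\pm1}$, and the second via $\|\hat W^{-1}-W_0^{-1}\|_\infty$ times $\vertiii{K_0}_1\le(d+1)L^2$, the control of $\hat W$ coming from concentration of the empirical variances $(\hat\Sigma)_{jj}$ about $(\Sigma_0)_{jj}$ (a point where one uses, beyond $\{\|\hat R-R_0\|_\infty\le\lambda_0\}$, the corresponding good event for $\hat\Sigma$). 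Summing yields $\vertiii{\hat\Theta_{\text{w}}-\Theta_0}_1\le C_Ls\lambda$.

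The main obstacle is the step in the second paragraph: making the Bregman lower bound on $D(\hat\Delta)$ effective requires a priori control of $\Lambda_{\max}(K_0+t\hat\Delta)$, which in turn requires the very bound on $\|\hat\Delta\|_F$ one is after — this is resolved by the convexity-on-a-sphere device, with the sparsity side condition $8c_L^2s\lambda^2\le\lambda_0/(2L)$ calibrated precisely so that the radius $r$ stays in the regime where the eigenvalue control and the quadratic domination both hold. The remaining bookkeeping, in particular the passage from $\hat\Theta_{\text{norm}}$ to $\hat\Theta_{\text{w}}$ through the random scaling $\hat W$, is routine by comparison.
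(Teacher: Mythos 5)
Your proof is correct and follows essentially the same route as the paper: the same basic inequality, the same key observation that $\hat R-R_0$ has zero diagonal (which is exactly what removes the $p\lambda_0^2$ term of Theorem~\ref{glasso.rate}), the same second-order Taylor/Bregman lower bound on the log-determinant divergence with eigenvalues of $K_0$ controlled via Condition~\ref{eig}, and the same diagonal/off-diagonal split plus $\hat W$-telescoping for the two $\vertiii{\cdot}_1$ bounds. The only point of divergence is the localization device --- you show the penalized objective is positive on a Frobenius sphere of radius $r\asymp c_L^2\sqrt{s}\,\lambda$, whereas the paper interpolates $\tilde\Theta=\alpha\hat\Theta_{\mathrm{norm}}+(1-\alpha)K_0$ with $\alpha=M/(M+\|\hat\Theta_{\mathrm{norm}}-K_0\|_F)$ --- but these are equivalent uses of convexity and lead to the same conclusion.
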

Using the normalized graphical Lasso leads to  faster  rates in Frobenius norm and $\ell_1$-norm as shown above. The rates for $\hat\Theta_\text{w}$ in $\vertiii{\cdot}_1$-norm can be then established immediately. 
To derive a high-proba\-bility bound for $\|\hat R -R_0\|_\infty$, we may apply Lemma \ref{conc1} together with H\"older's inequality
 to obtain $\|\hat R -R_0\|_\infty = \mathcal O_P(\sqrt{\log p/n})$. Hence, 
$\vertiii{\hat\Theta_{\text{w}}-\Theta_0}_1 =\mathcal O_P(s\sqrt{\log p/n}).$
\begin{remark}\normalfont
The above result requires a strong condition on the sparsity in $\Theta_0$, i.e. there can only be very few non-zero coefficients due to the restriction $ 8c_L^2 s\lambda^2 \leq \lambda_0/(2L).$ This condition guarantees that a margin condition is satisfied. 
An example of a graph satisfying the condition is a star graph with order $\sqrt{n}$ edges.
\end{remark}

\subsubsection*{Asymptotic normality}
Once oracle results in $\ell_1$-norm are available, we can easily obtain results on asymptotic normality of the de-biased estimator $2\hat\Theta-\hat\Theta\hat\Sigma\hat\Theta$ for the graphical Lasso and its weighted version.
We denote the asymptotic variance of the de-biased estimator by $\sigma_{ij}^2:=n\text{var}((\Theta_i^0)^T\hat\Sigma\Theta_j^0)$. The arrow $\rightsquigarrow $ denotes convergence in distribution and for a matrix $A$ we denote $(A)_{ij}$ its $(i,j)$-entry.
\begin{theorem}[Regime $p\ll n$%Moderately large $p$ regime
]
\label{glasso.rem}
Assume \conds \ref{eig}, \ref{subgv}, $\lambda\asymp \sqrt{\log p/n}$ and that  $(p+s)\sqrt{d}=o(\sqrt{n}/\log p)$. Then it holds
that
\begin{equation}\label{an}
2\hat\Theta - \hat\Theta  \hat\Sigma\hat\Theta -\Theta_0= 
-\Theta_0 (\hat\Sigma - \Sigma_0)\Theta_0 + \emph{rem},
\end{equation}
where 
$$\|\emph{rem}\|_\infty =\mathcal O_P\left(24 (8L^2)^2 L (p+s)\sqrt{d+1}\lambda^2\right) = o_P(1/\sqrt{n}).
%o_P((p+s)\sqrt{\log p/n} \vertiii{\hat\Theta}_1 \lambda \|\hat Z\|_\infty + (p+s)\sqrt{\log p/n}\sqrt{\log p/n})=
%\mathcal O_P((p+s)\sqrt{s}\log p/n)
$$
%\underline{Asymptotics:} for $\lambda\asymp \sqrt{\log p/n},$ we have $\|\emph{rem}\|_\infty = o_P(1/\sqrt{n})$ and 
Moreover, for $i,j=1,\dots,p,$
$$\sqrt{n}(2\hat\Theta_{} - \hat\Theta \hat\Sigma \hat\Theta-\Theta_{0})_{ij}/\sigma_{ij} \rightsquigarrow \mathcal N(0,1).$$
\end{theorem}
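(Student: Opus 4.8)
The plan is to specialize the general de-biasing identity \eqref{deco.al} to the graphical Lasso, control the two resulting remainder pieces $\text{rem}_0$ and $\text{rem}_{\text{reg}}$ in sup-norm by combining H\"older's inequality with the $\vertiii{\cdot}_1$ oracle bound of Theorem~\ref{glasso.rate} and the concentration bound of Lemma~\ref{conc1}, and finally to apply the Lindeberg central limit theorem to the leading linear term. First I would record the algebra: since \eqref{glasso} optimizes over symmetric matrices, $\hat\Theta=\hat\Theta^T$, and the KKT identity $\hat\Sigma\hat\Theta-I+\lambda\hat Z\hat\Theta=0$ obtained above shows $\eta(\hat\Theta)=\lambda\hat Z\hat\Theta=I-\hat\Sigma\hat\Theta$; substituting into \eqref{desp} identifies $\hat T=\hat\Theta+\hat\Theta^T\eta(\hat\Theta)$ with the estimator $2\hat\Theta-\hat\Theta\hat\Sigma\hat\Theta$ of the statement, so \eqref{deco.al} becomes precisely \eqref{an} with $\text{rem}=\text{rem}_0+\text{rem}_{\text{reg}}$, $\text{rem}_0=-\Theta_0(\hat\Sigma-\Sigma_0)(\hat\Theta-\Theta_0)$ and $\text{rem}_{\text{reg}}=(\hat\Theta-\Theta_0)^T\lambda\hat Z\hat\Theta$. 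I would then fix $\lambda_0\asymp\sqrt{\log p/n}$ and $\lambda=2\lambda_0$: by Lemma~\ref{conc1} with $A=B=I$ the event $\{\|\hat\Sigma-\Sigma_0\|_\infty\le\lambda_0\}$ has probability tending to one, and the sparsity hypothesis $(p+s)\sqrt d=o(\sqrt n/\log p)$ (which in particular forces $(p+s)=o(\sqrt{n/\log p})$) makes the scaling conditions of Theorem~\ref{glasso.rate} hold for $n$ large, so on that event $\vertiii{\hat\Theta-\Theta_0}_1\le 16c_L^2(p+s)\lambda$ with $c_L=8L^2$.

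Next I would bound the two remainders. For $\text{rem}_0$, H\"older's inequality gives $\|\text{rem}_0\|_\infty\le\|\Theta_0(\hat\Sigma-\Sigma_0)\|_\infty\,\vertiii{\hat\Theta-\Theta_0}_1$; since the columns of $\Theta_0$ have Euclidean norm at most $\Lambda_{\max}(\Theta_0)\le L$ by \condi \ref{eig}, Lemma~\ref{conc1} with $A=\Theta_0$, $B=I$, $M=L$ and $t\asymp\log p/n$ bounds the first factor by $\mathcal O_P(\sqrt{\log p/n})$, whence $\|\text{rem}_0\|_\infty=\mathcal O_P((p+s)\lambda^2)$. For $\text{rem}_{\text{reg}}$, H\"older together with $\|\hat Z\|_\infty\le1$ gives $\|\text{rem}_{\text{reg}}\|_\infty\le\vertiii{\hat\Theta-\Theta_0}_1\,\lambda\vertiii{\hat\Theta}_1$, and I would bound $\vertiii{\hat\Theta}_1\le\vertiii{\Theta_0}_1+\vertiii{\hat\Theta-\Theta_0}_1$, using Cauchy--Schwarz column by column and the fact that column $j$ of $\Theta_0$ has at most $d_j+1$ nonzero entries to get $\vertiii{\Theta_0}_1\le\max_j\sqrt{d_j+1}\,\|\Theta_j^0\|_2\le L\sqrt{d+1}$. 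This yields $\|\text{rem}_{\text{reg}}\|_\infty=\mathcal O_P((p+s)\sqrt{d+1}\,\lambda^2+(p+s)^2\lambda^3)$, and summing the two pieces, $\|\text{rem}\|_\infty=\mathcal O_P((p+s)\sqrt{d+1}\,\lambda^2)=\mathcal O_P((p+s)\sqrt{d+1}\log p/n)$, which is $o_P(1/\sqrt n)$ exactly by the sparsity assumption (the cross term $(p+s)^2\lambda^3$ being of strictly smaller order there); tracking the constants from Theorem~\ref{glasso.rate} gives the displayed prefactor $24(8L^2)^2L$.

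For the limit law I would set $Y^k:=\Theta_0X^k$, note $\mathbb E Y^k=0$ and $\text{cov}(Y^k)=\Theta_0\Sigma_0\Theta_0=\Theta_0$, so $(\Theta_i^0)^T(\hat\Sigma-\Sigma_0)\Theta_j^0=n^{-1}\sum_{k=1}^n(Y_i^kY_j^k-\mathbb EY_i^kY_j^k)$ and, by \eqref{an} and the remainder bound, $\sqrt n\,(2\hat\Theta-\hat\Theta\hat\Sigma\hat\Theta-\Theta_0)_{ij}=-n^{-1/2}\sum_{k=1}^n(Y_i^kY_j^k-\mathbb EY_i^kY_j^k)+o_P(1)$. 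The first term is a normalized sum of i.i.d.\ random variables with common variance $\sigma_{ij}^2=n\,\text{var}((\Theta_i^0)^T\hat\Sigma\Theta_j^0)$; under \condi \ref{subgv} the products $Y_i^kY_j^k$ are sub-exponential, so the Lindeberg condition holds and $n^{-1/2}\sum_k(Y_i^kY_j^k-\mathbb EY_i^kY_j^k)/\sigma_{ij}\rightsquigarrow\mathcal N(0,1)$, while \condi \ref{eig} keeps $\sigma_{ij}$ bounded away from $0$ and $\infty$ (for instance in the Gaussian case $\sigma_{ij}^2=\Theta_{ii}^0\Theta_{jj}^0+(\Theta_{ij}^0)^2\ge1/L^2$). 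Slutsky's lemma then delivers the stated convergence.

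The hard part will be the bound on $\text{rem}_{\text{reg}}$: obtaining the sharp $\sqrt{d+1}$ (rather than $\sqrt p$) dependence forces one to exploit the degree structure of $\Theta_0$ through $\vertiii{\Theta_0}_1\le L\sqrt{d+1}$ and to pair it with the global $\vertiii{\cdot}_1$ oracle rate of Theorem~\ref{glasso.rate}, and then one must check that $(p+s)\sqrt{d+1}\log p/n$ is swallowed by $o_P(1/\sqrt n)$ — the cancellation between the $\log p$ that appears here and the $\sqrt{\log p}$ hidden in $\lambda$ has to come out exactly, which is precisely why the sparsity hypothesis is stated with $\sqrt n/\log p$ and not $\sqrt{n/\log p}$. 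By contrast, the algebraic identity and the central limit theorem are routine once the oracle bounds are granted.
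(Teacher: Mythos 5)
Your proposal is correct and follows essentially the same route as the paper's proof: the same KKT-based decomposition into $\text{rem}_0+\text{rem}_{\text{reg}}$, the $\vertiii{\cdot}_1$ oracle bound of Theorem~\ref{glasso.rate} combined with Lemma~\ref{conc1}, the bound $\vertiii{\Theta_0}_1\le L\sqrt{d+1}$ via column-wise Cauchy--Schwarz, and Lindeberg's CLT for the linear term. The only (immaterial) deviation is that you bound $\|\Theta_0(\hat\Sigma-\Sigma_0)\|_\infty$ directly via Lemma~\ref{conc1} with $A=\Theta_0$, $B=I$, whereas the paper factors it as $\|\hat\Sigma-\Sigma_0\|_\infty\vertiii{\Theta_0}_1$; both give the stated order.
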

The result of Theorem \ref{glasso.rem} gives us tools to construct approximate confidence intervals and tests for individual entries of $\Theta_0.$ However, we need a consistent estimator of the asymptotic variance $\sigma_{ij}.$ 
For the Gaussian case, one may take
$\hat\sigma_{ij}^2 := \hat\Theta_{ii}\hat\Theta_{jj} + \hat\Theta_{ij}^2.$ 
We omit the proof of consistency of $\hat\sigma_{ij}$ and point the reader to \cite{jvdgeer15}, where other distributions are treated as well.
Moreover, Theorem \ref{glasso.rem} implies parametric rates of convergence for estimation of individual entries and a rate of order $\sqrt{\log p/n} $ for the error in supremum norm.
Theorem \ref{glasso.rem} requires a stronger sparsity condition that the corresponding oracle-type inequality in Theorem 
\ref{glasso.rate}. This is to be expected as will be argued in Section \ref{sec:con}. 

\par
%The necessity to assume the condition $p\ll n$ may be avoided by u
Using the weighted graphical Lasso, % as will be shown below.
%If we use the graphical Lasso with weighted penalty, $\hat\Theta_{\text{w}},$ 
the results of Theorem \ref{glasso.rem} can be established under weaker conditions as shown in the following theorem.
\begin{theorem}[Regime $p\gg n$]
\label{denorm.glasso.rem}
Assume \conds \ref{eig}, \ref{subgv}  and $s^{}\sqrt{d}=o(\sqrt{n}/\log p).$ Then for $\lambda\asymp \sqrt{\log p/n},$ 
the asymptotic linearity \eqref{an} holds with $\hat\Theta_{\emph{w}}$,  
where 
$$\|\emph{rem}\|_\infty =\mathcal O_P\left( 12 (8L^2)^2s^{}\sqrt{d+1}\lambda^2\right)=o_P(1/\sqrt{n}).$$
%\underline{Asymptotics:} for $\lambda\asymp \sqrt{\log p/n},$ we have $\|\emph{rem}\|_\infty = o_P(1/\sqrt{n})$ and 
Moreover, for $i,j=1,\dots,p$,
$\sqrt{n}(2\hat\Theta_{\emph{w}}+ \hat\Theta_{\emph{w}} \hat\Sigma \hat\Theta_{\emph{w}}-\Theta_{0})_{ij}/\sigma_{ij} \rightsquigarrow \mathcal N(0,1).$

\end{theorem}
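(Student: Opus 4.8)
The plan is to repeat the de-biasing argument that underlies Theorem~\ref{glasso.rem}, but to feed in the sharper oracle bound for the \emph{weighted} graphical Lasso supplied by Theorem~\ref{norm.glasso.rate} in place of the $\vertiii{\cdot}_1$-bound for the plain graphical Lasso of Theorem~\ref{glasso.rate}. The difference between the two oracle bounds, $\mathcal O_P(s\lambda)$ versus $\mathcal O_P((p+s)\lambda)$, is precisely what replaces the sparsity requirement $(p+s)\sqrt d=o(\sqrt n/\log p)$ by $s\sqrt d=o(\sqrt n/\log p)$.

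First I would collect the probabilistic inputs. Writing $W_0:=\mathrm{diag}(\Sigma_0)^{1/2}$, Condition~\ref{eig} gives $(W_0)_{ii}^2=(\Sigma_0)_{ii}\in[1/L,L]$. With $\lambda\asymp\lambda_0\asymp\sqrt{\log p/n}$, Lemma~\ref{conc1} (applied once with $A=B=I$, and once with $A=\Theta_0$, whose columns have Euclidean norm at most $\Lambda_{\max}(\Theta_0)\le L$, and $B=I$) gives that with probability tending to one $\|\hat\Sigma-\Sigma_0\|_\infty\le\lambda_0$, $\|\hat W^2-W_0^2\|_\infty\le\lambda_0$, $\|\Theta_0(\hat\Sigma-\Sigma_0)\|_\infty=\mathcal O_P(\lambda)$ and $\|\hat R-R_0\|_\infty\le\lambda_0$; in particular $\max_i\hat W_{ii}^2=\mathcal O_P(1)$ and $\max_i\hat W_{ii}^{-2}=\mathcal O_P(1)$. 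On this event Theorem~\ref{norm.glasso.rate} applies (its hypothesis $8c_L^2 s\lambda^2\le\lambda_0/(2L)$ is implied by $s\sqrt d=o(\sqrt n/\log p)$, which already forces $s\lambda=o(1)$), and together with the identity $\hat\Theta_{\mathrm w}=\hat W^{-1}\hat\Theta_{\mathrm{norm}}\hat W^{-1}$ it yields $\vertiii{\hat\Theta_{\mathrm w}-\Theta_0}_1=\mathcal O_P(s\lambda)$; since $\vertiii{\cdot}_1$ is the maximum column $\ell_1$-norm, this also gives $\max_j\|\hat\Theta_{\mathrm w,j}-\Theta_j^0\|_1=\mathcal O_P(s\lambda)=o_P(1)$.

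Next I would set up the decomposition. The KKT conditions for \eqref{glasso.w} read $\hat\Sigma-\hat\Theta_{\mathrm w}^{-1}+\lambda\hat W\hat Z\hat W=0$, where $\hat Z$ has zero diagonal and off-diagonal entries bounded by $1$; multiplying by $\hat\Theta_{\mathrm w}$ puts us in the setting of Section~\ref{sec:debias} with bias term $\eta(\hat\Theta_{\mathrm w})=\lambda\hat W\hat Z\hat W\hat\Theta_{\mathrm w}$, so by \eqref{deco.al}--\eqref{desp} and symmetry of $\hat\Theta_{\mathrm w}$ the de-biased estimator is $\hat T=2\hat\Theta_{\mathrm w}-\hat\Theta_{\mathrm w}\hat\Sigma\hat\Theta_{\mathrm w}$ and satisfies $\hat T-\Theta_0=-\Theta_0(\hat\Sigma-\Sigma_0)\Theta_0+\mathrm{rem}_0+\mathrm{rem}_{\mathrm{reg}}$ with $\mathrm{rem}_0=-\Theta_0(\hat\Sigma-\Sigma_0)(\hat\Theta_{\mathrm w}-\Theta_0)$ and $\mathrm{rem}_{\mathrm{reg}}=(\hat\Theta_{\mathrm w}-\Theta_0)^T\eta(\hat\Theta_{\mathrm w})$. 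I would bound both entrywise. For the $(i,j)$ entry of $\mathrm{rem}_0$, Hölder gives $|(\mathrm{rem}_0)_{ij}|\le\|\Theta_0(\hat\Sigma-\Sigma_0)\|_\infty\,\|\hat\Theta_{\mathrm w,j}-\Theta_j^0\|_1=\mathcal O_P(\lambda)\cdot\mathcal O_P(s\lambda)=\mathcal O_P(s\lambda^2)$, which is of smaller order than the target. For $\mathrm{rem}_{\mathrm{reg}}$, column-by-column Hölder gives $|(\mathrm{rem}_{\mathrm{reg}})_{ij}|\le\|\hat\Theta_{\mathrm w,i}-\Theta_i^0\|_1\cdot\lambda\,(\max_k\hat W_{kk}^2)\,\|\hat\Theta_{\mathrm w,j}\|_1$. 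The only nonroutine point is controlling $\|\hat\Theta_{\mathrm w,j}\|_1$: the crude bound $\|\Theta_j^0\|_1\le(d_j+1)\max_i|\Theta_{ij}^0|$ would only give $s(d+1)\lambda^2$, hence the worse condition $s\,d=o(\sqrt n/\log p)$; instead one uses that $\Theta_j^0$ has at most $d_j+1$ nonzero entries together with $\|\Theta_j^0\|_2\le\Lambda_{\max}(\Theta_0)\le L$, so $\|\Theta_j^0\|_1\le\sqrt{d_j+1}\,\|\Theta_j^0\|_2\le L\sqrt{d+1}$ and therefore $\|\hat\Theta_{\mathrm w,j}\|_1\le L\sqrt{d+1}+\mathcal O_P(s\lambda)=\mathcal O_P(\sqrt{d+1})$. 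Combining the pieces yields $\|\mathrm{rem}_{\mathrm{reg}}\|_\infty=\mathcal O_P(s\sqrt{d+1}\lambda^2)$, and carrying the explicit constants through Theorem~\ref{norm.glasso.rate} reproduces the stated $12(8L^2)^2 s\sqrt{d+1}\lambda^2$; since $\lambda\asymp\sqrt{\log p/n}$, the hypothesis $s\sqrt d=o(\sqrt n/\log p)$ is exactly what makes this $o_P(1/\sqrt n)$, establishing \eqref{an} for $\hat\Theta_{\mathrm w}$.

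For the central limit theorem I would observe that, by symmetry of $\Theta_0$, the leading term of \eqref{an} at the $(i,j)$ entry equals $-\sqrt n\,(\Theta_i^0)^T(\hat\Sigma-\Sigma_0)\Theta_j^0=-n^{-1/2}\sum_{k=1}^n\{(\Theta_i^0)^TX^k(X^k)^T\Theta_j^0-\mathbb E[(\Theta_i^0)^TX^k(X^k)^T\Theta_j^0]\}$, an average of i.i.d.\ centered random variables with variance $\sigma_{ij}^2$ that are sub-exponential under Condition~\ref{subgv}, so the Lindeberg central limit theorem gives $-\sqrt n\,(\Theta_0(\hat\Sigma-\Sigma_0)\Theta_0)_{ij}/\sigma_{ij}\rightsquigarrow\mathcal N(0,1)$; since $\sigma_{ij}$ is bounded away from $0$ under Condition~\ref{eig} (e.g.\ $\sigma_{ij}^2=\Theta_{ii}^0\Theta_{jj}^0+(\Theta_{ij}^0)^2\ge 1/L^2$ in the Gaussian case), dividing \eqref{an} by $\sigma_{ij}/\sqrt n$ and applying Slutsky's lemma with the $o_P(1/\sqrt n)$ remainder gives the stated asymptotic normality. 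The main obstacle is the sharp $\sqrt{d+1}$ bound on $\|\hat\Theta_{\mathrm w,j}\|_1$ in the $\mathrm{rem}_{\mathrm{reg}}$ estimate; the remaining bookkeeping---transferring the oracle bound from $\hat\Theta_{\mathrm{norm}}$ to $\hat\Theta_{\mathrm w}$ and controlling $\hat W$ via Lemma~\ref{conc1}---is routine.
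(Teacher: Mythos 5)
Your proposal is correct and follows essentially the same route as the paper: the paper likewise feeds the $\vertiii{\hat\Theta_{\mathrm w}-\Theta_0}_1=\mathcal O_P(s\lambda)$ bound from Theorem~\ref{norm.glasso.rate} into the H\"older bounds for $\mathrm{rem}_0$ and $\mathrm{rem}_{\mathrm{reg}}$, using exactly your key estimate $\vertiii{\Theta_0}_1\le\sqrt{d+1}\,\Lambda_{\max}(\Theta_0)$ to control $\vertiii{\hat\Theta_{\mathrm w}}_1$, and concludes by the Lindeberg CLT. Your treatment is in fact somewhat more explicit than the paper's two-line proof (which just points to the analogous arguments for Theorem~\ref{glasso.rem} and Proposition~\ref{norm.glasso.rem}), e.g.\ in tracking the extra $\hat W$ factors in the bias term.
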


\par
If the parameter of interest is instead  the inverse correlation matrix, we formulate a partial result below.  

\begin{proposition}[Regime $p\gg n$]
\label{norm.glasso.rem}
Assume \conds \ref{eig}, \ref{subgv}, $\lambda\asymp \sqrt{\log p/n}$ and that $s^{}\sqrt{d}=o(\sqrt{n}/\log p).$ Then it holds 
$$2\hat\Theta_{\emph{norm}} - \hat\Theta_{\emph{norm}}  \hat R \hat\Theta_{\emph{norm}} -K_0
= 
-K_0 (\hat R - R_0)K_0 + \emph{rem},
$$
$$\|\emph{rem}\|_\infty =\mathcal O_P\left(12 (8L^2)^2 Ls^{}\sqrt{d+1}\lambda^2\right)=o_P(1/\sqrt{n}).$$
%Moreover, by the $\delta$-method applied to the function $g(\Theta):= \Theta_{ij}/\sqrt{\Theta_{ii}\Theta_{jj}}$, it follows that
%$$\sqrt{n}(2\hat\Theta_{\emph{norm}} + \hat\Theta_{\emph{norm}}  \hat\Gamma \hat\Theta_{\emph{norm}} -K_0)_{ij}
%/\rho_{ij}
%\rightsquigarrow \mathcal N(0,1
%).$$
%%\dot h(\Sigma_0)^T \emph{cov}(Z) \dot h(\Sigma_0)
%%where $Z=(X^iX^j, (X^i)^2, (X^j)^2).$
%%And we have $\dot h(\Sigma_0)^T \emph{cov}(Z) \dot h(\Sigma_0) = 
%%\left(1-\left(\Gamma^0_{ij}\right)^2\right)^2,$ where $\Gamma^0_{ij}:=\frac{\Sigma^0_{ij}}{\sqrt{\Sigma^0_{ii}\Sigma^0_{jj}}}.$
\end{proposition}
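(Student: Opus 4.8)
The plan is to specialise the general de-biasing scheme of Section~\ref{sec:debias} to the normalized graphical Lasso, letting $\hat R$, $R_0$ and $K_0$ play the roles of $\hat\Sigma$, $\Sigma_0$ and $\Theta_0$, and then to bound the two remainder terms with the oracle bound of Theorem~\ref{norm.glasso.rate} and the concentration bound of Lemma~\ref{conc1}.

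First I would record the KKT conditions for \eqref{glasso.n}, namely $\hat R-\hat\Theta_{\text{norm}}^{-1}+\lambda\hat Z=0$ with $\hat Z_{ij}=\text{sign}((\hat\Theta_{\text{norm}})_{ij})$ on the off-diagonal support of $\hat\Theta_{\text{norm}}$, $\hat Z_{ii}=0$ and $\|\hat Z\|_\infty\leq 1$. Multiplying on the right by $\hat\Theta_{\text{norm}}$ gives $\hat R\hat\Theta_{\text{norm}}-I+\eta=0$ with bias term $\eta:=\lambda\hat Z\hat\Theta_{\text{norm}}=I-\hat R\hat\Theta_{\text{norm}}$, which is exactly \eqref{kkt1} in the present substitution. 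Consequently the algebra leading to \eqref{deco.al}, together with the symmetry of $\hat\Theta_{\text{norm}}$ (so that $\hat\Theta_{\text{norm}}+\hat\Theta_{\text{norm}}^T\eta=2\hat\Theta_{\text{norm}}-\hat\Theta_{\text{norm}}\hat R\hat\Theta_{\text{norm}}$), gives
\begin{equation*}
2\hat\Theta_{\text{norm}}-\hat\Theta_{\text{norm}}\hat R\hat\Theta_{\text{norm}}-K_0=-K_0(\hat R-R_0)K_0+\text{rem}_0+\text{rem}_{\text{reg}},
\end{equation*}
with $\text{rem}_0=-K_0(\hat R-R_0)(\hat\Theta_{\text{norm}}-K_0)$ and $\text{rem}_{\text{reg}}=(\hat\Theta_{\text{norm}}-K_0)^T\eta$. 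Thus $\text{rem}=\text{rem}_0+\text{rem}_{\text{reg}}$, and it remains to bound $\|\text{rem}_0\|_\infty$ and $\|\text{rem}_{\text{reg}}\|_\infty$ in probability.

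The key structural input is that $K_0=R_0^{-1}$ has the same off-diagonal zero pattern as $\Theta_0$, so each column of $K_0$ has at most $d+1$ nonzero entries; since Condition~\ref{eig} also bounds the spectrum of $K_0$ by a constant depending only on $L$, Cauchy--Schwarz gives $\vertiii{K_0}_1\leq\sqrt{d+1}\,\Lambda_{\max}(K_0)=\mathcal O(\sqrt{d+1})$. Taking $\lambda_0\asymp\sqrt{\log p/n}$ with a large enough constant, Lemma~\ref{conc1} and H\"older's inequality make the event $\{\|\hat R-R_0\|_\infty\leq\lambda_0\}$ have probability tending to one; on it, $s\sqrt d=o(\sqrt n/\log p)$ forces $s\lambda=o(1)$, so the hypotheses of Theorem~\ref{norm.glasso.rate} hold for $n$ large and yield $\vertiii{\hat\Theta_{\text{norm}}-K_0}_1=\mathcal O_P(s\lambda)$. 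Then H\"older gives $\|\text{rem}_{\text{reg}}\|_\infty\leq\vertiii{\hat\Theta_{\text{norm}}-K_0}_1\|\eta\|_\infty$, where $\|\eta\|_\infty=\|\lambda\hat Z\hat\Theta_{\text{norm}}\|_\infty\leq\lambda\vertiii{\hat\Theta_{\text{norm}}}_1\leq\lambda(\vertiii{K_0}_1+\vertiii{\hat\Theta_{\text{norm}}-K_0}_1)=\mathcal O_P(\lambda\sqrt{d+1})$, the last step because $s\lambda=o(1)$; hence $\|\text{rem}_{\text{reg}}\|_\infty=\mathcal O_P(s\sqrt{d+1}\lambda^2)$. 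Similarly $\|\text{rem}_0\|_\infty\leq\vertiii{K_0}_1\,\|\hat R-R_0\|_\infty\,\vertiii{\hat\Theta_{\text{norm}}-K_0}_1=\mathcal O_P(s\sqrt{d+1}\lambda^2)$, using the bound $\|\hat R-R_0\|_\infty=\mathcal O_P(\sqrt{\log p/n})=\mathcal O_P(\lambda)$ noted after Theorem~\ref{norm.glasso.rate}. Adding the two estimates and tracking the explicit constants from Theorem~\ref{norm.glasso.rate} ($c_L=8L^2$) and Condition~\ref{eig} gives $\|\text{rem}\|_\infty=\mathcal O_P(12(8L^2)^2 L s\sqrt{d+1}\,\lambda^2)$; since $\lambda^2\asymp\log p/n$ and $s\sqrt d=o(\sqrt n/\log p)$, this is $o_P(1/\sqrt n)$.

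The step I expect to be the main obstacle is the sharp $\sqrt{d+1}$ (rather than $\sqrt p$) control of $\|\eta\|_\infty$ and $\|\text{rem}_0\|_\infty$. A crude bound $\vertiii{K_0}_1\leq\sqrt p\,\Lambda_{\max}(K_0)$ would leave a remainder of order $s\sqrt p\lambda^2$, hence requiring the much stronger sparsity $s\sqrt p=o(\sqrt n/\log p)$; exploiting that the underlying graph has maximum degree $d$, so that $K_0$ has at most $d+1$ nonzero entries per column, is exactly what makes the stated weaker assumption suffice. Everything else is the bookkeeping of feeding the $\vertiii{\cdot}_1$ oracle rate of Theorem~\ref{norm.glasso.rate} and the concentration rate of Lemma~\ref{conc1} into H\"older's inequality. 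Note that, as befits the announced partial result, no $\mathcal N(0,1)$ limit is claimed here; that would follow from the displayed asymptotic linearity and a Lindeberg argument as in Theorem~\ref{denorm.glasso.rem}, given a consistent variance estimator.
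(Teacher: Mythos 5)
Your proposal is correct and follows essentially the same route as the paper: the paper's proof likewise splits $\emph{rem}$ into the two Hölder-bounded terms $\vertiii{\hat\Theta_{\text{norm}}-K_0}_1\|\hat R K_0-I\|_\infty$ and $\vertiii{\hat\Theta_{\text{norm}}-K_0}_1\lambda\|\hat Z\|_\infty\vertiii{\hat\Theta_{\text{norm}}}_1$, feeds in the $\ell_1$-rate $C_L s\lambda$ from Theorem~\ref{norm.glasso.rate}, and uses exactly the $\vertiii{K_0}_1\leq\sqrt{d+1}\,\Lambda_{\max}$ bound you identify as the key step. The only difference is cosmetic (which factor the transpose sits on), so there is nothing to add.
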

To claim asymptotic normality of $K_0 (\hat R - R_0)K_0$ per entry would require extensions of central limit theorems to high-dimensional settings (see \cite{cltvch}) and an extension of the $\delta$-method. We do not study these extensions in the present work. To give a glimpse, in the regime when $p$ is fixed, by the central limit theorem it follows  that 
 $\sqrt{n}(\hat\Sigma-\Sigma_0)\rightsquigarrow \mathcal N_{p^2}(0,C)$, where $C$ is the asymptotic covariance matrix.
Then we may apply the $\delta$-method with 
$h_{ij}(\Sigma):=(K_i^0)^T \text{diag}(\Sigma)^{-1/2}\Sigma \text{diag}(\Sigma)^{-1/2}K_j^0$ to obtain 
 $\sqrt{n}(h_{ij}(\hat\Sigma)-h_{ij}(\Sigma_0)) \rightsquigarrow \mathcal N (0, \dot h(\Sigma_0)^T C \dot h(\Sigma_0)).$
%To avoid technical digressions, we do not elaborate on this.
\par

Finally, we show that the sparsity conditions in the above results may be further relaxed under a stronger regularity condition on the true precision matrix. 
 We provide here a simplified version of the result in \cite{jvdgeer14} which assumes an irrepresentability condition on the true precision matrix. 
Let $\kappa_{\Sigma_0}$ be the $\ell_\infty$-operator norm of the true covariance matrix $\Sigma_0$, i.e. 
$\kappa_{\Sigma_0} = \vertiii{\Sigma_0}_1$. % = \max_i \sum_{j=1}^p |\Sigma^0_{ij}|.$
Let $ H_0$ be the Hessian of the expected  Gaussian log-likelihood evaluated at $\Theta_0$, i.e. $H_0=\Sigma_0\otimes \Sigma_0.$ 
When calculating the Hessian matrix, we treat the precision matrix as non-symmetric; this will allow us to accommodate non-symmetric estimators as well.
For any two subsets $T$ and $T'$ of $\mathcal V\times \mathcal V$, we use $H^0_{TT'}$ to denote the $|T|\times|T'|$ matrix with rows 
and columns of $H_0$ indexed by $T$ and $T'$ respectively.
Define 
$\kappa_{H_0}=\vertiii{(H^0_{SS})^{-1}}_1.$
\begin{cond}%{\text{(A4)}}
(Irrepresentability condition)\label{ir}
There exists $\alpha\in (0,1]$ such that
%\begin{equation}
$\max_{e \in S^c}\|H^0_{eS}(H^0_{SS})^{-1}\|_1 \leq 1-\alpha,$ where $S^c $ is the complement of $S.$
%\end{equation}
\end{cond}
Condition \ref{ir} is an analogy of the irrepresentable condition for variable selection in linear regression (see \cite{vdgeer09}). 
If we define the zero-mean edge random variables as
$Y_{(i,j)}:= X_i X_j - \mathbb E(X_iX_j),$
then the matrix $H_0$ corresponds to covariances of the edge variables, in particular
$H^0_{(i,j),(k,l)} + H^0_{(j,i),(k,l)} = \text{cov}(Y_{(i,j)},Y_{(k,l)})$.
Condition \ref{ir} means that we require that no edge variable $Y_{(j,k)}$, which is not included in the edge set $S$, is highly correlated with variables in the edge set (see \cite{ravikumar}). The parameter $\alpha$ then is a measure of this correlation with the correlation growing when $\alpha\rightarrow 0$. Some examples of matrices satisfying the irrepresentable condition may be found in \cite{jvdgeer15}.

\begin{theorem}[Regime $p\gg n$%Large $p$ regime
]
\label{glasso.rem.irr}
Assume that \conds \ref{eig}, \ref{subgv} and \ref{ir} are satisfied,  $d=o(\sqrt{n}/\log p),$ $\kappa_{\Sigma_0}=\mathcal O(1)$ 
and $\kappa_{H_0}=\mathcal O(1).$ Then for $\lambda\asymp \sqrt{\log p/n}$,
the asymptotic linearity \eqref{an} holds with $\hat\Theta_{}$,  
%it holds 
%$$2\hat\Theta + \hat\Theta  \hat\Sigma \hat\Theta -\Sigma_0= 
%-\Theta_0 (\hat\Sigma - \Sigma_0)\Theta_0 + \emph{rem},
%$$
where 
%$|\emph{rem}_{ij}| = \mathcal O_P(1/\sqrt{n})$ and 
$\|\emph{rem}_{}\|_\infty = \mathcal O_P(d\log p/n)= o_P(1/\sqrt{n}).$
Moreover,
$$\sqrt{n}(2\hat\Theta_{} - \hat\Theta \hat\Sigma \hat\Theta-\Theta_{0})_{ij}/\sigma_{ij} \rightsquigarrow \mathcal N(0,1).$$

\end{theorem}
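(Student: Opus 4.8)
The plan is to exploit the decomposition \eqref{deco.al} specialized to the graphical Lasso, where the bias term is $\eta(\hat\Theta) = \lambda \hat Z \hat\Theta$ and the de-biased estimator $\hat T = 2\hat\Theta - \hat\Theta\hat\Sigma\hat\Theta$ satisfies
\[
\hat T - \Theta_0 = -\Theta_0(\hat\Sigma - \Sigma_0)\Theta_0 + \mathrm{rem}_0 + \mathrm{rem}_{\mathrm{reg}},
\]
with $\mathrm{rem}_0 = -\Theta_0(\hat\Sigma-\Sigma_0)(\hat\Theta-\Theta_0)$ and $\mathrm{rem}_{\mathrm{reg}} = (\hat\Theta-\Theta_0)^T\lambda\hat Z\hat\Theta$. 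The whole task is to show $\|\mathrm{rem}_0\|_\infty = o_P(1/\sqrt n)$ and $\|\mathrm{rem}_{\mathrm{reg}}\|_\infty = o_P(1/\sqrt n)$ under the irrepresentability regime, and then invoke Lindeberg's CLT entrywise as in Section~\ref{sec:debias}. The key point is that Condition~\ref{ir} (together with $\kappa_{\Sigma_0}, \kappa_{H_0} = \mathcal O(1)$) upgrades the oracle bounds of Theorems~\ref{glasso.rate}--\ref{norm.glasso.rate}: instead of merely controlling $\vertiii{\hat\Theta-\Theta_0}_1 \lesssim (p+s)\lambda$, irrepresentability yields variable-selection-type control, in particular $\hat\Theta$ has its support contained in $S$ with high probability, so that each row has at most $d+1$ nonzero entries, giving the sharper bounds $\|\hat\Theta-\Theta_0\|_\infty = \mathcal O_P(\lambda)$ and $\vertiii{\hat\Theta-\Theta_0}_1 = \mathcal O_P(d\lambda)$, with $\lambda \asymp \sqrt{\log p/n}$.

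First I would record the refined oracle inequalities that follow from \cite{jvdgeer14} under Conditions~\ref{eig}, \ref{subgv}, \ref{ir} and $\kappa_{\Sigma_0},\kappa_{H_0} = \mathcal O(1)$: with probability tending to one, $\mathrm{supp}(\hat\Theta^-)\subseteq S$, $\|\hat\Theta-\Theta_0\|_\infty \lesssim \lambda$, and hence $\vertiii{\hat\Theta-\Theta_0}_1 \lesssim (d+1)\lambda$ (summing $d+1$ entries each of size $\mathcal O(\lambda)$ per column). Second, for $\mathrm{rem}_{\mathrm{reg}}$ apply H\"older as in Section~\ref{sec:debias}:
\[
\|\mathrm{rem}_{\mathrm{reg}}\|_\infty \le \vertiii{\hat\Theta-\Theta_0}_1 \,\|\lambda\hat Z\hat\Theta\|_\infty \le \vertiii{\hat\Theta-\Theta_0}_1 \,\lambda\vertiii{\hat\Theta}_1 \lesssim (d+1)\lambda \cdot \lambda (d + \mathcal O(1)) \asymp d\log p/n,
\]
using $\vertiii{\hat\Theta}_1 \le \vertiii{\hat\Theta-\Theta_0}_1 + \vertiii{\Theta_0}_1 = \mathcal O(d)$ once $\kappa_{\Sigma_0}=\vertiii{\Sigma_0}_1 = \mathcal O(1)$ and Condition~\ref{eig} bound $\vertiii{\Theta_0}_1$ (this is where the row-sparsity of $\hat\Theta$ enters). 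Third, for $\mathrm{rem}_0$ I would not use the crude $\|\Theta_0(\hat\Sigma-\Sigma_0)\|_\infty \vertiii{\hat\Theta-\Theta_0}_1$ bound directly — that would cost an extra $\sqrt{\log p/n}$ and is too lossy. Instead, since $\hat\Theta-\Theta_0$ has columns supported on $S_j \cup\{j\}$ with $\ell_2$-norm $\mathcal O(\sqrt{d+1}\,\lambda)$, I would write $\mathrm{rem}_0 = -\Theta_0(\hat\Sigma-\Sigma_0)(\hat\Theta-\Theta_0)$ and bound its $(i,j)$ entry as $|(\hat\Theta-\Theta_0)_j$ restricted to its support$|$ times the corresponding restricted rows of $\Theta_0(\hat\Sigma-\Sigma_0)$, applying Lemma~\ref{conc1} with $A = \Theta_0$ (columns bounded under Condition~\ref{eig}) and $B$ a sparse selection matrix with columns of norm $1$, giving $\|[\Theta_0(\hat\Sigma-\Sigma_0)]_{\cdot,S_j}\|_\infty = \mathcal O_P(\sqrt{\log p/n})$ uniformly, and then Cauchy--Schwarz over the $\le d+1$ coordinates. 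This yields $\|\mathrm{rem}_0\|_\infty = \mathcal O_P(\sqrt{d+1}\,\lambda \cdot \sqrt{d+1}\cdot \sqrt{\log p/n}) = \mathcal O_P(d\log p/n)$, matching $\mathrm{rem}_{\mathrm{reg}}$. Finally, $d\log p/n = o(1/\sqrt n)$ is exactly the hypothesis $d = o(\sqrt n/\log p)$, so both remainders are $o_P(1/\sqrt n)$; asymptotic normality of $\sqrt n(\hat T-\Theta_0)_{ij}/\sigma_{ij}$ then follows from the linear term $-\sqrt n(\Theta_0(\hat\Sigma-\Sigma_0)\Theta_0)_{ij} = -\sqrt n\,((\Theta_i^0)^T(\hat\Sigma-\Sigma_0)\Theta_j^0)$ being a normalized sum of i.i.d.\ mean-zero terms with variance $\sigma_{ij}^2/n$, to which Lindeberg's CLT applies under the sub-Gaussian tails of Condition~\ref{subgv}.

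The main obstacle is the sharp control of $\mathrm{rem}_0$: the naive H\"older split loses a factor and would force $d\sqrt{\log p/n}\cdot\sqrt{\log p/n} = d\log p/n$ only if one already knows $\vertiii{\hat\Theta-\Theta_0}_1 = \mathcal O(d\lambda)$ — so everything hinges on first establishing, via the irrepresentability condition, that $\hat\Theta$ exactly recovers (a superset of) the support $S$, which is the delicate primal-dual witness argument of \cite{ravikumar}/\cite{jvdgeer14}. Once that structural fact is in hand, the remaining steps are routine applications of Lemma~\ref{conc1}, H\"older/Cauchy--Schwarz, and the CLT; the sparsity bookkeeping (tracking factors of $d+1$ versus $p+s$) is what distinguishes this theorem from Theorem~\ref{glasso.rem} and is where the relaxed sparsity requirement $d = o(\sqrt n/\log p)$ comes from.
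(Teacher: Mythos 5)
The paper itself does not prove Theorem \ref{glasso.rem.irr}; it defers entirely to \cite{jvdgeer14}, so your proposal has to stand on its own. Its skeleton is right: the primal--dual witness argument under Condition \ref{ir} with $\kappa_{\Sigma_0},\kappa_{H_0}=\mathcal O(1)$ gives $\mathrm{supp}(\hat\Theta^-)\subseteq S$ and $\|\hat\Theta-\Theta_0\|_\infty=\mathcal O_P(\lambda)$, hence $\vertiii{\hat\Theta-\Theta_0}_1=\mathcal O_P(d\lambda)$ and $\|\hat\Theta_j-\Theta_j^0\|_2=\mathcal O_P(\sqrt{d}\,\lambda)$, and these are exactly the inputs needed. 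Your treatment of $\mathrm{rem}_0$ is correct (in fact slightly over-engineered: once $\|\Theta_0(\hat\Sigma-\Sigma_0)\|_\infty=\mathcal O_P(\lambda)$ is obtained from Lemma \ref{conc1} with the deterministic matrix $A=\Theta_0$, the plain H\"older bound $\|\Theta_0(\hat\Sigma-\Sigma_0)\|_\infty\vertiii{\hat\Theta-\Theta_0}_1$ already gives $\mathcal O_P(d\log p/n)$; nothing extra is lost there).

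The genuine gap is in $\mathrm{rem}_{\mathrm{reg}}$. The displayed chain $\vertiii{\hat\Theta-\Theta_0}_1\,\lambda\,\vertiii{\hat\Theta}_1\lesssim (d+1)\lambda\cdot\lambda(d+\mathcal O(1))\asymp d\log p/n$ is arithmetically false: as written it is of order $d^2\lambda^2=d^2\log p/n$, and even with the correct bound $\vertiii{\Theta_0}_1\le\sqrt{d+1}\,\Lambda_{\max}(\Theta_0)\le\sqrt{d+1}\,L$ (note that $\kappa_{\Sigma_0}=\vertiii{\Sigma_0}_1=\mathcal O(1)$ bounds $\Sigma_0$, not $\Theta_0$, so $\vertiii{\hat\Theta}_1=\mathcal O_P(\sqrt{d})$ is the best you get) the route through $\vertiii{\hat\Theta-\Theta_0}_1\|\eta(\hat\Theta)\|_\infty$ yields only $\mathcal O_P(d^{3/2}\log p/n)$. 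Under the assumed sparsity $d=o(\sqrt{n}/\log p)$ this is \emph{not} $o_P(1/\sqrt{n})$ unless $d$ is bounded, so the theorem does not follow from your bound. The fix is to stop treating $\eta(\hat\Theta)$ as a black box with $\|\eta(\hat\Theta)\|_\infty\le\lambda\vertiii{\hat\Theta}_1$ and instead substitute $\eta(\hat\Theta)=-(\hat\Sigma\hat\Theta-I)=-(\hat\Sigma-\Sigma_0)\Theta_0-\hat\Sigma(\hat\Theta-\Theta_0)$, so that with $\Delta:=\hat\Theta-\Theta_0$ one has $\mathrm{rem}_{\mathrm{reg}}=-\Delta^T(\hat\Sigma-\Sigma_0)\Theta_0-\Delta^T(\hat\Sigma-\Sigma_0)\Delta-\Delta^T\Sigma_0\Delta$; the first term is $\mathcal O_P\bigl(\vertiii{\Delta}_1\|(\hat\Sigma-\Sigma_0)\Theta_0\|_\infty\bigr)=\mathcal O_P(d\lambda\cdot\lambda)$ by Lemma \ref{conc1}, the second is $\mathcal O_P(\vertiii{\Delta}_1^2\lambda)=\mathcal O_P(d^2\lambda^3)=o_P(d\lambda^2)$ since $d\lambda\to0$, and the third is $\mathcal O_P\bigl(\Lambda_{\max}(\Sigma_0)\max_j\|\Delta_j\|_2^2\bigr)=\mathcal O_P(d\lambda^2)$ — i.e.\ exactly the same column-support/$\ell_2$ device you already deploy for $\mathrm{rem}_0$, applied to the term where it is actually indispensable.
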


The proof of Theorem \ref{glasso.rem.irr} may be found in \cite{jvdgeer14}.
We remark that under the irrepresentability condition, one can show that 
$|\hat\Theta_{ij}-\Theta^0_{ij}|=\mathcal O_P(1/\sqrt{n})$ (see \cite{ravikumar}). 
%i.e. the graphical Lasso is a consistent estimator per entry at the parametric rate. 
This means that one could use the graphical Lasso itself to construct confidence intervals of asymptotically optimal (parametric) size. However, this holds under the strong irrepresentability condition which is often violated in practice.
\par
Comparing the results obtained for the (weighted) graphical Lasso,
we see that the strongest result was attained under the irrepresentable condition and under the sparsity condition $d=o(\sqrt{n}/\log p)$. An analogous result has not yet been obtained for the graphical Lasso without the irrepresentable condition (under the same sparsity condition).
However, without the irrepresentable condition, we showed the same results for the weighted graphical Lasso under the sparsity condition $s^{}\sqrt{d}=o(\sqrt{n}/\log p)$. 
In the next section, we will consider a procedure based on pseudo-likelihood, for which we can derive identical results under weaker conditions, namely under the sparsity condition $d=o(\sqrt{n}/\log p)$ and under the \conds \ref{eig} and \ref{subgv}.

\subsection{Nodewise square-root Lasso}

\label{sec:node}
An alternative approach to estimate the precision matrix is based on linear regression.
The idea of nodewise Lasso is to estimate each column of the precision matrix by doing a projection of every column of the design matrix on all the remaining columns. While this is a pseudo-likelihood method, the decoupling into linear regressions gains more flexibility in estimating the individual scaling levels compared to the graphical Lasso which aims to estimate all the parameters simultaneously. 
Moreover, by splitting the problem up into a series of linear regressions, the computational burden is reduced compared to the graphical Lasso.
\par
In low-dimensional settings, regressing each column of the design matrix on all the other columns would simply recover the inverse of the sample covariance matrix $\hat\Sigma.$ However, due to the high-dimensionality of our setting, the matrix $\hat\Sigma$ is not invertible and we can only do approximate projections. If we assume sparsity in the precision matrix (and thus also in the partial correlations), this idea can be effectively carried out using the square-root Lasso (\cite{sqrtlasso}). \par
The theoretical motivation can be understood in greater detail from the population version of the method. For each $j=1,\dots,p,$ we define  
the vector of partial correlations $ \gamma_j^0=\{\gamma^0_{j,k},k\not = j\}$ as follows
\begin{equation}\label{true}
\gamma_j^0 := \text{argmin}_{\gamma\in\mathbb R^{p-1}} \mathbb E\|X_j - {} X_{-j}\gamma\|_2^2/n,
\end{equation}
and we denote the noise level by $\tau_j^2 = \mathbb E\|X_j - {} X_{-j}\gamma_j^0\|_2^2/n.$
Then one may  show that the $j$-th column of $\Theta_0$ can be recovered from the partial correlations $\gamma_j^0$ and the noise level $\tau_j$ using the following identity:
%\begin{equation}\label{idnr}
$\Theta^0_j = (-\gamma_{j,1},\dots, -\gamma_{j,j-1},1, -\gamma_{j,j+1} ,\dots,$ $-\gamma_{j,p})^T/\tau_j^2$.

  Hence, the idea is to define for each $j=1,\dots,p$ the estimators of the partial correlations, $\hat\gamma_j =\{\hat\gamma_{j,k},k=1,\dots,p,j\not = k\} \in \mathbb R^{p-1}$ using, for instance, the square-root Lasso with weighted penalty,

\begin{equation}\label{NR2}
\hat\gamma_j := \text{argmin}_{\gamma\in\mathbb R^{p-1}}
 \|X_j - {} X_{-j}\gamma\|_2/n+2\lambda \|\hat W_{-j}\gamma\|_1,
\end{equation}
where by $A_{-j}$ we denote the matrix $A$ without its $j$-th column.
We further define 
estimators of the noise level 
$$\hat\tau_j^2 := \|X_j - {} X_{-j}\hat\gamma_j\|_2^2/n,\quad \tilde\tau_j^2 :=\hat\tau_j^2+
\lambda\hat\tau_j\|\hat\gamma_j\|_1
%+ \lambda_j \|\hat\gamma_j\|_1
,$$
for $j=1,\dots,p$.
Finally, we define %the $j$-th column of 
the nodewise square-root Lasso estimator 
\begin{equation}
\label{nodew}
\hat\Theta_{\text{}} :=
%\left( 
%\begin{array}{cccc}
%1/\hat\tau_1^2 & 0 & %\hat\gamma_{1,2}/\hat\tau_1^2 &
 %\dots & 0 \\
%0 & 1/\hat\tau_2^2 & \dots &0 \\ 
%\vdots &  \vdots & \ddots & \vdots \\
%0 &  \dots & 0 & 1/\hat\tau_p^2
%\end{array}
%\right)
\left( 
\begin{array}{cccc}
\;\;\;\;\;\;\;1/\tilde\tau_1^2 & -\tilde\gamma_{1,2}/\tilde\tau_1^2 & %\tilde\gamma_{1,2}/\tilde\tau_1^2 &
 \dots & -\tilde\gamma_{1,p}/\tilde\tau_1^2 \\
-\tilde\gamma_{2,1}/\tilde\tau_2^2 & \;\;\;\;\;\;\;1/\tilde\tau_2^2 & \dots &- \tilde\gamma_{2,p}/\tilde\tau_2^2 \\ 
\vdots &  \vdots & \ddots & \vdots \\
-\tilde\gamma_{p,1}/\tilde\tau_p^2 &  \dots & -\tilde\gamma_{p,p-1}/\tilde\tau_p^2 & \;\;\;\;\;\;1/\tilde\tau_p^2
\end{array}
\right)
\end{equation}
An equivalent way of formulating the definitions above is 
%(taking derivative w.r.t. $\sigma$)
\begin{equation}\label{NRalt}
(\hat\gamma_j ,\hat\tau_j)= \text{argmin}_{\gamma\in\mathbb R^{p-1},\tau\in\mathbb R}
 \|X_j - {} X_{-j}\gamma\|_2^2/n/(2\tau) + \tau/2+2\lambda \|\hat W_{-j}\gamma\|_1.
\end{equation}
%This equivalent formulation shows in what sense the estimator of the noise level, $\hat\tau_j$, is optimal. 
%Thanks to the weighted penalty, the estimator is invariant under scaling of the variables. To see this, take $\hat R=\hat W^{-1} \hat\Sigma \hat W^{-1}$ in place of $\hat\Sigma$, and define  the normalized version
%\begin{equation}\label{NRnorm}
%(\hat\gamma_{j,\text{norm}} ,\hat\tau_{j,\text{norm}}):= \text{arg}\min_{\gamma\in\mathbb R^{p-1},\tau\in\mathbb R}
 %\Gamma^T \hat R \Gamma/n/(2\tau) + \tau/2+2\lambda_j \|\gamma\|_1,
 %%\|(X_j - {} X_{-j}\hat W\gamma\|_2^2/n/(2\tau) + \tau/2+2\lambda_j \|\gamma\|_1.
%\end{equation}
%where $\Gamma=(-\gamma_1,\dots,-\gamma_j,1,-\gamma_{j+1},\dots,-\gamma_p),$
%and  define $\hat\Theta_{\text{norm}}$ analogously as in \eqref{nodew}.
%Then it holds that $ \hat\Theta_{\text{}}=\hat W ^{-1}\hat\Theta_{\text{norm}}\hat W ^{-1}$. 
Alternative versions of the above estimator were considered in the literature. 
%The book \cite{sf} defines the nodewise square-root Lasso with $\tilde\tau_j^2:=\hat\tau_j^2 + \lambda_j \|\hat\gamma_j\|_1$ in place of $\hat\tau_j.$ 
One may use the Lasso (\cite{lasso}) instead of the square-root Lasso (as in \cite{jvdgeer15}) or the Dantzig selector (see \cite{vdgeer14}).
Furthermore, one may define the nodewise square-root Lasso with $\hat\tau_j$ in place of $\tilde\tau_j.$ 
\par
The properties of the column estimator $\hat \Theta_j$ were studied in several papers (following \cite{buhlmann}) and it has been shown to enjoy oracle properties under the \conds \ref{eig}, \ref{subgv} and 
under the sparsity condition $d=\mathcal O(n/\log p)$ (see \cite{vdgeer13}, where a similar version was considered).
\par
In line with Section \ref{sec:debias}, we consider a de-biased version of the nodewise square-root Lasso estimator. 
The KKT conditions for the optimization problem (\ref{NR2}) give
\begin{equation}\label{kkt}%\tag*{\text{(KKT)}$_j$}
-{\hat\tau_j} X_{-j}^T(X_j-{} X_{-j}\hat\gamma_j)/n + \lambda \hat \kappa_j = 0,
\end{equation}
for $j=1,\dots,p$, where $\hat\kappa_j$ is an element of the sub-differential of the function $\gamma_j \mapsto \|\gamma_j\|_1$ at $\hat\gamma_j,$ i.e.
for $k\in \{1,\dots,p\}\setminus \{j\}$,
\[ 
\hat\kappa_{j,k} = 
%\begin{cases}
 \text{sign}(\hat\gamma_{j,k}) \;\; \text{if } \hat\gamma_{j,k}\not = 0,\quad
\text{ and }\quad \|\hat\kappa_j\|_\infty\leq 1.
 %\hat a_{j,k} \in [-1,1] & \text{otherwise}.
%\end{cases}
\]
If we define $\hat Z_j$ to be a $p\times 1$ vector 
$$\hat {Z}_j:= ( \hat \kappa_{j,1},\dots,\hat \kappa_{j,j-1},0, \hat \kappa_{j,j+1},\dots,\hat \kappa_{j,p} ),$$
%If $\hat\Theta_j$ is obtained with the nodewise regression, then it satisfies
then the KKT conditions may be equivalently stated as follows
%\begin{equation}\label{mkkt}
$$\hat{\Sigma} \hat{\Theta}_j -  e_j - \lambda\frac{\hat\tau_j}{\tilde\tau_j^2 }\hat{ Z}_j=0.$$
\noindent
\noindent
Let $\hat Z$ be a matrix with columns $\hat Z_j$ for $j=1,\dots,p$, 
%$\Lambda$ is a diagonal matrix with elements $(\lambda_1,\dots,\lambda_p)$ 
 $\hat\tau$ be a diagonal matrix with elements $\text{}(\hat\tau_1,\dots,\hat\tau_p)$ and similarly $\tilde\tau:=\text{diag}(\tilde\tau_1,\dots,\tilde\tau_p)$. 
As in Section \ref{sec:debias}, this yields the decomposition \eqref{deco.al} with a bias term 
$\eta(\hat\Theta):=\hat Z\Lambda \hat \tau \tilde\tau^{-2}.$ The bias term can then be controlled as  
$$\|\eta(\hat\Theta)\|_\infty\leq \lambda\|\hat\tau\|_\infty\|\tilde\tau^{-2}\|_\infty \|\hat Z\|_\infty \leq \lambda\max_{1 \leq j\leq p} \hat\tau_j/\tilde\tau_j^2.$$

\begin{theorem}[Regime $p\gg n$] 
\label{node.rem} 
Suppose that \conds \ref{eig}, \ref{subgv} are satisfied and $d=o(\sqrt{n}/\log p)$.
Let $\hat\Theta_{{\emph{node}}}$ be the estimator defined in \eqref{nodew} and let $\lambda\asymp\sqrt{\log p/n}$.
Then it holds
\begin{equation*}%\label{main}
\hat \Theta_{\emph{}}+\hat \Theta_{\emph{}}^T -\hat \Theta_{\emph{}}^T\hat\Sigma \hat \Theta_{\emph{}}-\Theta_{0} = 
- \Theta_{0}(\hat\Sigma -\Sigma_0)\Theta_{0} + {\emph{rem}},
\end{equation*}
where %there exists a constant $C>0$ such that
$\|\emph{rem}\|_\infty =\mathcal O_P( d\lambda^2)=o_P(1/\sqrt{n})$. 
%\\
%\underline{Asymptotics:} for $\lambda\asymp\sqrt{\log p/n}$, we have $\|\emph{rem}\|_\infty=o_P(1/\sqrt{n}).$
Moreover,  
$$\sqrt{n}(\hat\Theta_{}+\hat\Theta_{}^T - \hat\Theta^T \hat\Sigma \hat\Theta-\Theta_{0})_{ij}/\sigma_{ij} \rightsquigarrow \mathcal N(0,1).$$
\end{theorem}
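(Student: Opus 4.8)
The plan is to specialize the general de-biasing decomposition of Section~\ref{sec:debias}. The KKT conditions \eqref{kkt} of the nodewise square-root Lasso have already been recast above in the form \eqref{kkt1}, $\hat\Sigma\hat\Theta - I + \eta(\hat\Theta)=0$ with bias term $\eta(\hat\Theta)=\hat Z\Lambda\hat\tau\tilde\tau^{-2}$, so the purely algebraic identity \eqref{deco.al} applies verbatim, and since $\eta(\hat\Theta)=I-\hat\Sigma\hat\Theta$ gives $\hat\Theta+\hat\Theta^T\eta(\hat\Theta)=\hat\Theta+\hat\Theta^T-\hat\Theta^T\hat\Sigma\hat\Theta$, it yields
\[
\hat\Theta+\hat\Theta^T-\hat\Theta^T\hat\Sigma\hat\Theta-\Theta_0 = -\Theta_0(\hat\Sigma-\Sigma_0)\Theta_0+\text{rem}_0+\text{rem}_{\text{reg}},
\]
with $\text{rem}_0=-\Theta_0(\hat\Sigma-\Sigma_0)(\hat\Theta-\Theta_0)$ and $\text{rem}_{\text{reg}}=(\hat\Theta-\Theta_0)^T\eta(\hat\Theta)$. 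So I would set $\text{rem}=\text{rem}_0+\text{rem}_{\text{reg}}$, and the whole proof reduces to two sup-norm bounds plus a central limit theorem for the leading term.

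The first ingredient I would establish is the $\ell_1$-operator-norm oracle bound $\vertiii{\hat\Theta-\Theta_0}_1=\mathcal O_P(d\lambda)$. For each node $j$, the regression of $X_j$ on $X_{-j}$ enjoys the standard square-root Lasso oracle inequalities under Conditions~\ref{eig} and~\ref{subgv}: the compatibility / restricted-eigenvalue constant is bounded below because $\Lambda_{\min}(\Theta_0)\ge 1/L$, so (already for $d=o(n/\log p)$, well within the hypothesis) one gets, uniformly in $j$, $\|\hat\gamma_j-\gamma_j^0\|_1=\mathcal O_P(d_j\lambda)$, $|\hat\tau_j^2-\tau_j^2|=\mathcal O_P(\sqrt d\,\lambda)$, and $\hat\tau_j\asymp\tau_j\asymp 1$ (using $\tau_j^2=1/\Theta_{jj}^0\in[1/L,L]$); the $p$-fold union bound is absorbed by the choice $\lambda\asymp\sqrt{\log p/n}$. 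Writing $\hat\Theta_j-\Theta_j^0=(\hat b_j-b_j^0)/\tilde\tau_j^2+b_j^0(\tilde\tau_j^{-2}-\tau_j^{-2})$, where $b_j^0$ collects $1$ and $-\gamma_j^0$ with $\|b_j^0\|_1=\mathcal O(\sqrt d)$ and $\hat b_j$ likewise collects $1$ and $-\hat\gamma_j$, and using $\tilde\tau_j^2\ge\hat\tau_j^2$ to control $|\tilde\tau_j^{-2}-\tau_j^{-2}|$, both pieces are $\mathcal O_P(d\lambda)$, whence $\|\hat\Theta_j-\Theta_j^0\|_1=\mathcal O_P(d\lambda)$ uniformly and the claim follows. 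These square-root Lasso facts — and especially assembling them uniformly over the $p$ regressions while keeping $\hat\tau_j,\tilde\tau_j$ bounded away from zero — are the part I expect to be the main obstacle; they are exactly what is imported from the literature cited around \eqref{NR2}, and the rest is Hölder's inequality.

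Given that, I would bound the remainders directly. The bias term satisfies $\|\eta(\hat\Theta)\|_\infty\le\lambda\max_j\hat\tau_j/\tilde\tau_j^2\le\lambda/\min_j\hat\tau_j=\mathcal O_P(\lambda)$, so by Hölder $\|\text{rem}_{\text{reg}}\|_\infty\le\vertiii{\hat\Theta-\Theta_0}_1\|\eta(\hat\Theta)\|_\infty=\mathcal O_P(d\lambda^2)$. For $\text{rem}_0$, Hölder for matrix products gives $\|\text{rem}_0\|_\infty\le\|\Theta_0(\hat\Sigma-\Sigma_0)\|_\infty\vertiii{\hat\Theta-\Theta_0}_1$, and applying Lemma~\ref{conc1} with $A=\Theta_0$ (whose columns have Euclidean norm at most $\Lambda_{\max}(\Theta_0)\le L$) and $B=I$ shows $\|\Theta_0(\hat\Sigma-\Sigma_0)\|_\infty=\mathcal O_P(\sqrt{\log p/n})=\mathcal O_P(\lambda)$, so $\|\text{rem}_0\|_\infty=\mathcal O_P(d\lambda^2)$ as well. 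Combining, $\|\text{rem}\|_\infty=\mathcal O_P(d\lambda^2)=\mathcal O_P(d\log p/n)$, which is $o_P(1/\sqrt n)$ precisely because $d=o(\sqrt n/\log p)$.

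Finally, for the distributional statement I would expand the leading term entrywise: since $(\Theta_i^0)^T\Sigma_0\Theta_j^0=\Theta_{ij}^0$, the $(i,j)$ entry of $-\Theta_0(\hat\Sigma-\Sigma_0)\Theta_0$ equals $-\tfrac1n\sum_{m=1}^n\big[(\Theta_i^0)^TX^m(X^m)^T\Theta_j^0-\Theta_{ij}^0\big]$, an average of i.i.d.\ mean-zero variables whose variance is $\sigma_{ij}^2=n\,\text{var}((\Theta_i^0)^T\hat\Sigma\Theta_j^0)$ — finite because Condition~\ref{subgv} gives finite fourth moments, and bounded away from zero (in the Gaussian case $\sigma_{ij}^2=\Theta_{ii}^0\Theta_{jj}^0+(\Theta_{ij}^0)^2\ge 1/L^2$). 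Lindeberg's central limit theorem then gives $\sqrt n(-\Theta_0(\hat\Sigma-\Sigma_0)\Theta_0)_{ij}/\sigma_{ij}\rightsquigarrow\mathcal N(0,1)$, and since the remainder contributes $\sqrt n\,\text{rem}_{ij}/\sigma_{ij}=o_P(1)$, Slutsky's lemma delivers $\sqrt n(\hat\Theta+\hat\Theta^T-\hat\Theta^T\hat\Sigma\hat\Theta-\Theta_0)_{ij}/\sigma_{ij}\rightsquigarrow\mathcal N(0,1)$.
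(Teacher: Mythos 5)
Your proposal is correct and follows essentially the same route as the paper: the paper's own proof simply defers to Theorem 1 of \cite{jvdgeer15}, adding only the weighted square-root-Lasso oracle inequality and the consistency of $\hat\tau_j$, and the steps you spell out --- the decomposition \eqref{deco.al}, the $\mathcal O_P(d\lambda)$ bound on $\vertiii{\hat\Theta-\Theta_0}_1$ assembled from the nodewise oracle inequalities and the control of $\tilde\tau_j^{-2}-\tau_j^{-2}$, the two H\"older bounds on $\text{rem}_0$ and $\text{rem}_{\text{reg}}$, and Lindeberg plus Slutsky --- are exactly what is being imported there. I see no gaps.
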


When the parameter of interest is the inverse correlation matrix, we can use the normalized version of the nodewise square-root Lasso and we obtain an analogous result.
\begin{proposition}[Regime $p\gg n$] 
\label{node.rem.norm} 
Suppose that \conds \ref{eig}, \ref{subgv} are satisfied, let $\lambda\asymp\sqrt{\log p/n}$ and $d=o(\sqrt{n}/\log p)$.
%Let $\hat\Theta_{\emph{norm}}$ be the estimator defined in \eqref{nodew.norm}. 
Then
\begin{equation*}%\label{main}
\hat \Theta_{\emph{norm}}+\hat \Theta_{\emph{norm}}^T -\hat \Theta_{\emph{norm}}^T\hat R \hat \Theta_{\emph{norm}}-\Theta_{0} = 
- K_{0}(\hat R -R_0)K_{0} + {\emph{rem}},
\end{equation*}
where %there exists a constant $C>0$ such that
$\|\emph{rem}\|_\infty =o_P(1/\sqrt{n}).$
%Then  it holds that
%$$\sqrt{n}(\hat\Theta_{\emph{norm}}+\hat\Theta_{\emph{norm}} + \hat\Theta_{\emph{norm}}^T \hat R \hat\Theta_{\emph{norm}}
%-\Theta_{0})_{ij}/\rho_{ij} \rightsquigarrow \mathcal N(0,1).$$
\end{proposition}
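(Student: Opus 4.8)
The plan is to follow the proof of Theorem~\ref{node.rem} almost verbatim, replacing the sample covariance $\hat\Sigma$, the population covariance $\Sigma_0$ and the precision matrix $\Theta_0$ by the sample correlation $\hat R$, the population correlation $R_0$ and the inverse correlation matrix $K_0=R_0^{-1}$ (so that the target in the expansion is $K_0$, as in Proposition~\ref{norm.glasso.rem}). First I would write out the KKT conditions for the normalized nodewise square-root Lasso: column by column they have the form of \eqref{kkt} with $\hat R$ in place of $\hat\Sigma$, and assembling them over $j$ yields, in the notation of \eqref{kkt1}, $\hat R\,\hat\Theta_{\text{norm}} - I + \eta(\hat\Theta_{\text{norm}}) = 0$ with a bias term $\eta(\hat\Theta_{\text{norm}})$ of the same form ($\propto \lambda\,\hat Z\,\hat\tau\,\tilde\tau^{-2}$) as in Section~\ref{sec:node}. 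Feeding this into the generic de-biasing identity \eqref{deco.al} gives
\[
\hat\Theta_{\text{norm}} + \hat\Theta_{\text{norm}}^T - \hat\Theta_{\text{norm}}^T\hat R\,\hat\Theta_{\text{norm}} - K_0 = -K_0(\hat R - R_0)K_0 + \text{rem}_0 + \text{rem}_{\text{reg}},
\]
with $\text{rem}_0 = -K_0(\hat R - R_0)(\hat\Theta_{\text{norm}} - K_0)$ and $\text{rem}_{\text{reg}} = (\hat\Theta_{\text{norm}} - K_0)^T\eta(\hat\Theta_{\text{norm}})$. Everything then reduces to showing $\|\text{rem}_0\|_\infty + \|\text{rem}_{\text{reg}}\|_\infty = o_P(1/\sqrt n)$.

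For this I would assemble three ingredients. (i) \emph{Concentration of $\hat R$.} Condition~\ref{eig} forces the diagonal of $\Sigma_0$ to lie in $[1/L,L]$, so the population weights $W_0$ (with $W_0^2=\text{diag}(\Sigma_0)$) and $W_0^{-1}$ are bounded and $K_0 = W_0\Theta_0 W_0$ again has spectrum bounded away from $0$ and $\infty$; in particular the columns of $K_0$ are bounded in $\|\cdot\|_2$. Lemma~\ref{conc1} (with $A=B=I$ and $t\asymp\log p/n$) gives $\|\hat\Sigma-\Sigma_0\|_\infty=\mathcal O_P(\sqrt{\log p/n})$, hence $\|\hat W-W_0\|_\infty$ and $\|\hat W^{-1}-W_0^{-1}\|_\infty$ are of the same order; decomposing $\hat R - R_0 = \hat W^{-1}(\hat\Sigma-\Sigma_0)\hat W^{-1} + (\hat W^{-1}-W_0^{-1})\Sigma_0\hat W^{-1} + W_0^{-1}\Sigma_0(\hat W^{-1}-W_0^{-1})$ and using Lemma~\ref{conc1} with bounded non-random $A,B$ together with H\"older's inequality yields $\|\hat R - R_0\|_\infty = \mathcal O_P(\sqrt{\log p/n})$ and $\|K_0(\hat R - R_0)\|_\infty = \mathcal O_P(\sqrt{\log p/n})$. (ii) \emph{Oracle bounds.} The column estimators are weighted square-root Lasso fits, so under \conds \ref{eig} and \ref{subgv} the standard square-root Lasso oracle inequalities (\cite{sqrtlasso}, \cite{vdgeer13}) give, for each $j$, an $\ell_1$-error of order $d_j\sqrt{\log p/n}$, together with consistency of $\hat\tau_j,\tilde\tau_j$ and the fact that they are bounded away from $0$ with high probability; taking the column-wise maximum gives $\vertiii{\hat\Theta_{\text{norm}}-K_0}_1 = \mathcal O_P(d\sqrt{\log p/n})$. (iii) \emph{Size of the bias.} As in Section~\ref{sec:node}, $\|\eta(\hat\Theta_{\text{norm}})\|_\infty \le \lambda\max_{1\le j\le p}\hat\tau_j/\tilde\tau_j^2 = \mathcal O_P(\lambda)$.

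Combining these through H\"older's inequality, as in the general scheme of Section~\ref{sec:debias},
\[
\|\text{rem}_{\text{reg}}\|_\infty \le \vertiii{\hat\Theta_{\text{norm}}-K_0}_1\,\|\eta(\hat\Theta_{\text{norm}})\|_\infty = \mathcal O_P(d\lambda^2), \qquad \|\text{rem}_0\|_\infty \le \|K_0(\hat R - R_0)\|_\infty\,\vertiii{\hat\Theta_{\text{norm}}-K_0}_1 = \mathcal O_P(d\lambda^2).
\]
With $\lambda\asymp\sqrt{\log p/n}$ this is $\mathcal O_P(d\log p/n)$, which is $o_P(1/\sqrt n)$ precisely when $d=o(\sqrt n/\log p)$, giving the claimed expansion.

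\textbf{Main obstacle.} The delicate step is (ii): controlling the per-column $\ell_1$-errors \emph{uniformly} in $j$ while the regression design uses the random sample weights $\hat W$ and the square-root Lasso self-normalizes through the data-driven $\hat\tau_j$. One must show that the compatibility / restricted-eigenvalue constants of the relevant submatrices of $\hat R$ are bounded below uniformly over $j$ with high probability --- this is exactly where Condition~\ref{eig} and the concentration of $\hat R$ around $R_0$ enter --- and that replacing $W_0$ by $\hat W$ in the penalty does not spoil them. If the direct estimates become unwieldy, a cleaner route is to exploit scale-equivariance: the weighted square-root Lasso \eqref{NR2} applied to $(X_j,X_{-j})$ coincides with an unweighted square-root Lasso on standardized columns, so $\hat\Theta_{\text{norm}}$ is obtained from the unnormalized node estimator of Theorem~\ref{node.rem} essentially by conjugation with $\hat W$; one can then transform the expansion already proved there and absorb the extra terms generated by $\hat W-W_0$ (each $\mathcal O_P(\sqrt{\log p/n})$ in $\|\cdot\|_\infty$, multiplied by a $\vertiii{\cdot}_1$-controlled factor) into the remainder, which stays $\mathcal O_P(d\lambda^2)$.
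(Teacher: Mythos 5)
Your proposal is correct and follows essentially the same route as the paper: the paper proves Proposition~\ref{node.rem.norm} implicitly by the same argument as Theorem~\ref{node.rem} (KKT conditions, the generic decomposition \eqref{deco.al}, weighted square-root Lasso oracle inequalities in $\ell_1$ per column, and H\"older's inequality on the two remainders), with $\hat R$, $R_0$, $K_0$ in place of $\hat\Sigma$, $\Sigma_0$, $\Theta_0$. Your additional observations --- that the target in the expansion should be $K_0$ and that the concentration of $\hat R$ follows from that of $\hat\Sigma$ and $\hat W$ --- are consistent with how the paper handles the normalized graphical Lasso in Proposition~\ref{norm.glasso.rem}.
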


\subsection{Computational view}
\label{sec:comput.view}

%The graphical Lasso and the nodewise square-root Lasso are convex programs which can be efficiently solved by coordinate descent methods.
For the nodewise square-root Lasso, we need to solve $p$ square-root Lasso regressions, which can be efficiently handled using interior-point methods with polynomial computational time or first-order methods (see \cite{sqrtlasso}). 
Alternatively to nodewise square-root Lasso, the nodewise Lasso studied in \cite{jvdgeer15} may be used, which requires selection of a tuning parameter for each of the $p$ regressions. This can be achieved e.g. by cross-validation and can be implemented efficiently using parallel methods (\cite{lars}).
 % (\cite{lars}).
 The graphical Lasso presents a more computationally challenging problem; we refer the reader to e.g. \cite{glasso.comp}.
The computation of the de-biased estimator itself only involves simple matrix addition and multiplication.
%For selection of the tuning parameters, potential useful approaches are based on stability under resampling (\cite{meinshausen2010stability}, \cite{shah2013variable}). 

\subsection{Simulation results}

We consider a setting with $n$ independent observations generated from $\mathcal N_p(0,\Theta_0^{-1}),$ where the precision matrix
$\Theta_0$ follows one of the three models:
\begin{enumerate}
\item
Model 1: $\Theta_0$ has two blocks of equal size and each block is a five-diagonal matrix with elements $(1,0.5,0.4) $ and $(2,1,0.6),$
respectively.
\item
Model 2: $\Theta_0$ is a sparse precision matrix  generated using the R package GGMselect (using the function simulateGraph() with parameter $0.07$). The matrix was converted to a correlation matrix with the function cov2cor().
\item
Model 3: $\Theta^0_{ij} =0.5^{|i-j|}$, $i,j=1,\dots,p.$
\end{enumerate}
We consider 6 different methods: the de-biased estimator based on the
\begin{enumerate}[(1)]
\itemsep-0.1em 
\item
  graphical Lasso (glasso)
\item
weighted graphical Lasso  (glasso-weigh), 
\item
nodewise square-root Lasso (node-sqrt) as defined above,
\item
 nodewise square-root Lasso with alternative $\tilde\tau$ as in  \cite{sunzhang} (node-sqrt-tau) 
\item
 nodewise Lasso as in \cite{jvdgeer15} (node)
\end{enumerate}
and we also consider
\begin{enumerate}[(1)]
\setcounter{enumi}{5}
\item
the maximum likelihood estimator (MLE).
\end{enumerate}
Furthermore, as a benchmark we report the oracle estimator (oracle) which applies maximum likelihood using the knowledge of true zeros in the precision matrix. We also report the target coverage and the efficient asymptotic variance of asymptotically regular estimators (see \cite{ae}) (perfect).

\begin{figure}[h]
\centering
\caption[Visualization of graphical models used in simulations.]{Visualization of graphical models used in simulations. Models 1,2 and 3 from left to right. For Model 3, we only plot edges with a weight greater than $0.1$.}
\vskip 0.3cm
\includegraphics[width=0.27\textwidth]{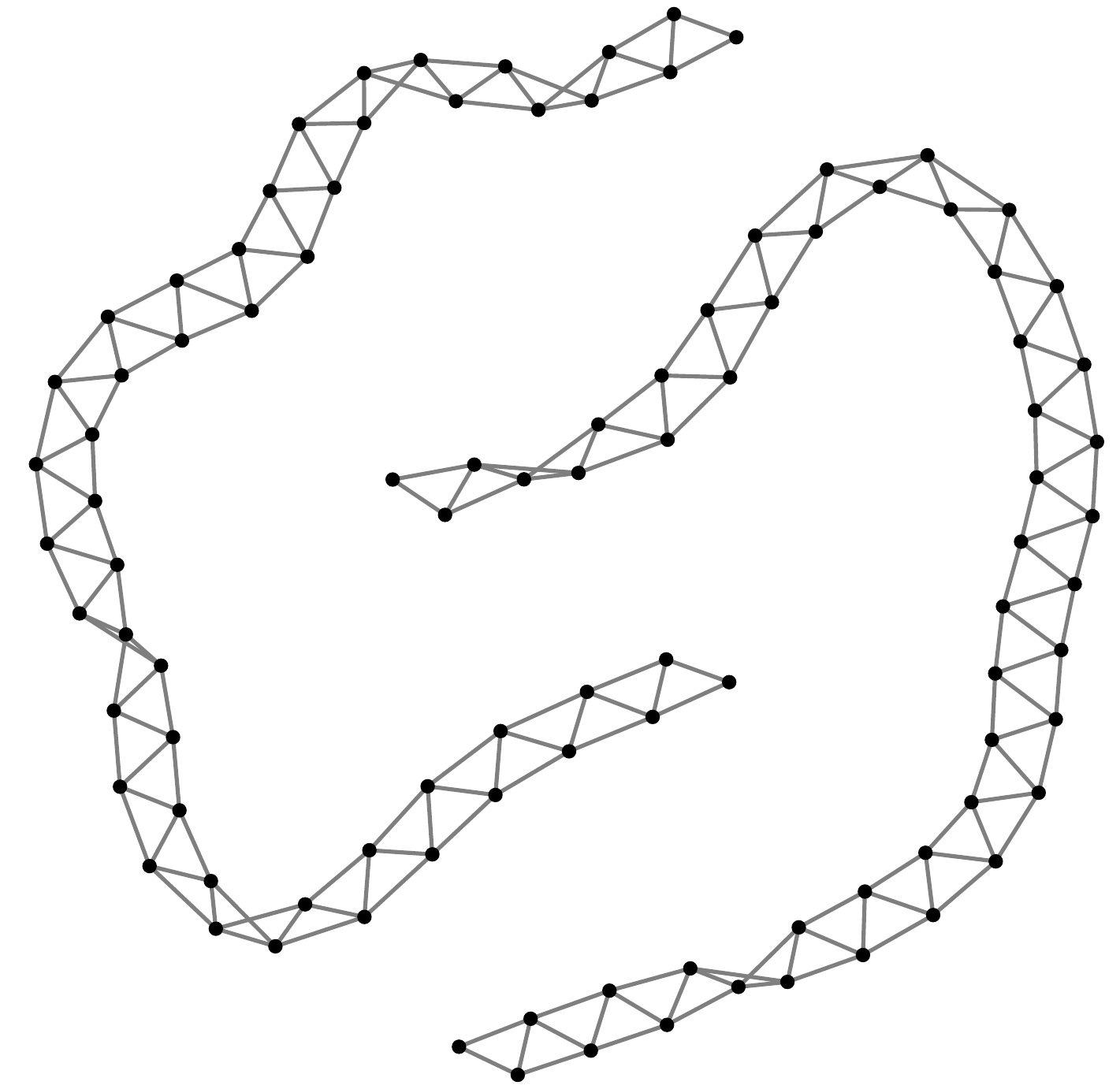}
\hskip 0.5cm
\includegraphics[width=0.27\textwidth]{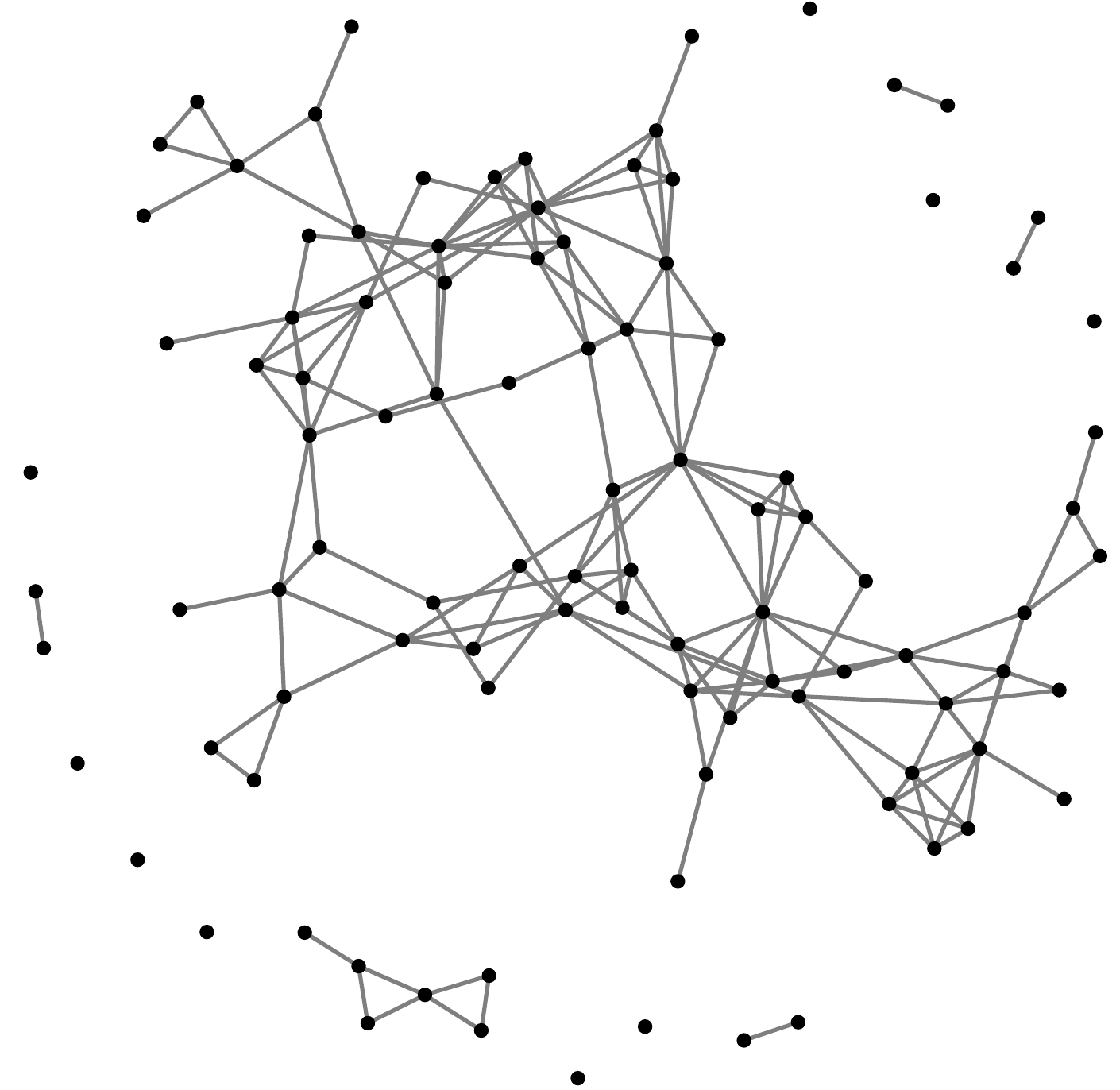}
\hskip 0.5cm
\includegraphics[width=0.27\textwidth]{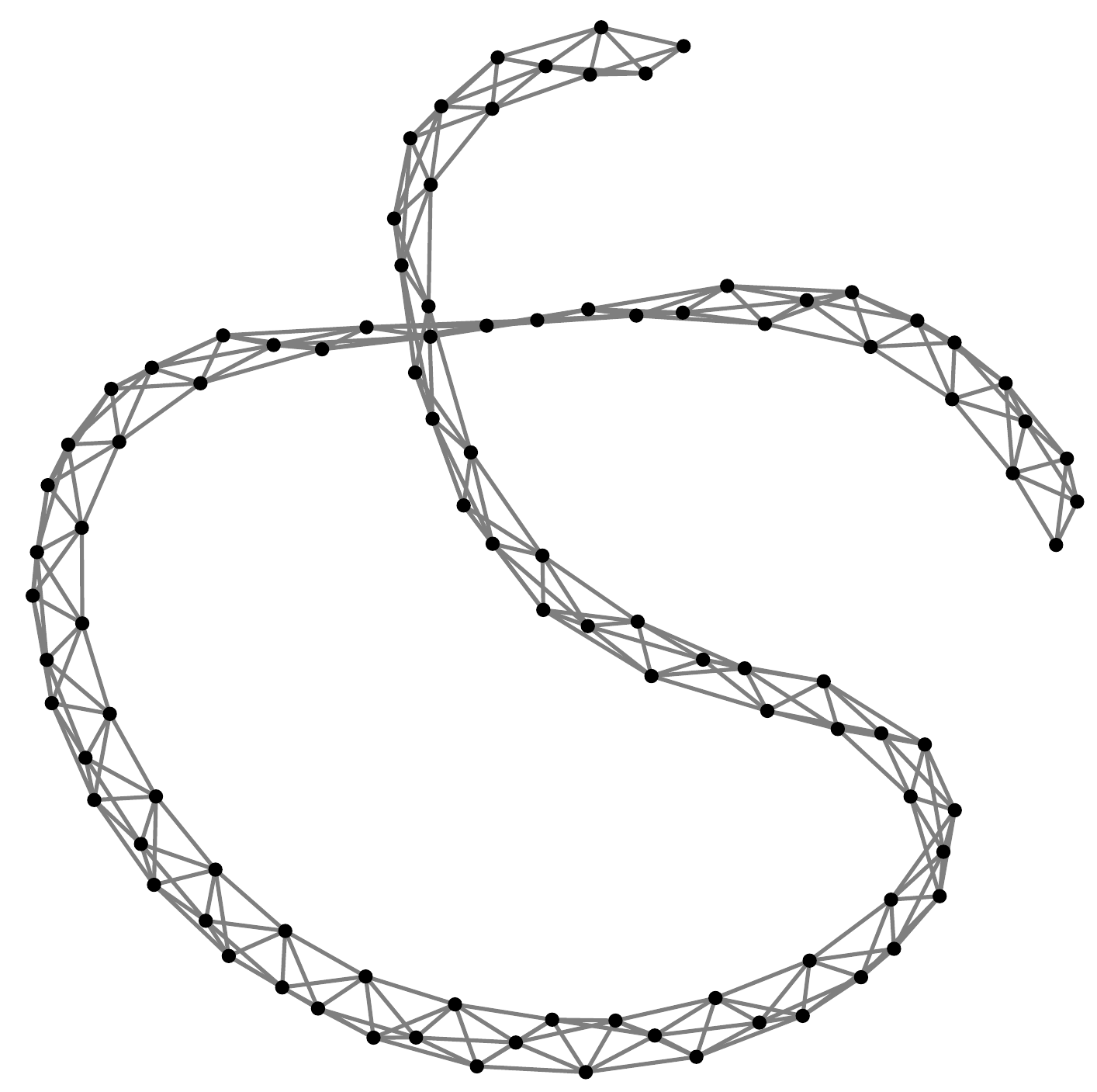}

\end{figure}

For the graphical Lasso (1) and the weighted graphical Lasso (2) we choose the tuning parameter by maximizing the likelihood
on a validation data set (a new data of the size $n$). 
% $\sqrt{\log p/n}$ in all simulations.
For methods (3), (4) and (5), the universal choice $\sqrt{\log p/n}$ is used.
% We used the universal choice also for the nodewise Lasso method, although other methods such as cross-validation might be preferable in practice.
\par
We display results on confidence intervals for nominal coverage $95\%$ and for normally distributed observations in Tables \ref{table:m1},
\ref{table:m2}, \ref{table:m3} and \ref{table:m4}. For other nominal coverages, we obtain similar performance (these results are not reported). For other than Gaussian distributions, we refer the reader to the simulation results in \cite{jvdgeer15}.
Firstly, the results of the simulations suggest that the de-biased estimators perform significantly better than
the maximum likelihood estimator even though $p<n$ and secondly, the nodewise methods seem to outperform the graphical lasso methods in our settings.

%2 block, n=200,p=100, $0.5\sqrt{\log p/n}$
% latex table generated in R 3.2.1 by xtable 1.8-0 package
% Fri Aug 19 21:14:05 2016
\begin{table}
\caption[Average coverages and lengths of confidence intervals over the active set $S_0:=S\cup \{1,\dots,p\}$ and its complement $S_0^c$, over $100$ realizations.]{%\footnotesize
Average coverages and lengths of confidence intervals over the active set $S_0:=S\cup \{1,\dots,p\}$ and its complement $S_0^c$, over $100$ realizations. The average value of the tuning parameters is reported in the last column. The benchmark ``estimators'' are labeled by a star.
The significance level is $0.05.$
%%* - oracle knows the zeros and performs (unpenalized) maximum likelihood
%%\\
%%** - the target asymptotic coverage and the efficient asymptotic variance (see \cite{ae})
}
%\scriptsize
\centering
\begin{tabular}{rlccccc}
\multicolumn{7}{c}{Model 1: Block 1: $(1,0.5,0.4)$, Block 2: $(2,1,0.6)$}  \\[0.1cm]
\multicolumn{7}{c}{$p=100,n=200$}  \\[0.1cm]
  \hline
	&     & \multicolumn{2}{c}{Coverage}& \multicolumn{2}{c}{Length}& Average $\lambda$\\ 
 & Method & $S_0$ & $S_0^c$ & $S_0$ & $S_0^c$ &\\ 
  \hline
	  %7 & target** & 95.00 & 95.00 & 0.48 & 0.40 \\ 
  %6 & oracle* & 94.95 & 95.00 & 0.49 & 0.41 \\ 
  1 & glasso & 77.19 & 98.07 & 0.36 & 0.32 &  0.088\\ 
  2 & glasso-weigh & 35.02 & 98.65 & 0.31 & 0.27 & 0.088\\ 
  3 & node-sqrt & 89.92 & 94.02 & 0.48 & 0.42 &0.152\\ 
  4 & node-sqrt-tau & 83.48 & 97.40 & 0.38 & 0.33 &0.152\\ 
  5 & node & 90.58 & 96.77 & 0.41 & 0.35 & 0.152\\ 
  6 & MLE & 20.92 & 84.27 & 0.97 & 0.81 &-\\\hline 
  7 & oracle* & 94.95 & - & 0.49 & 0.40 &- \\ 
  8 & perfect* & 95.00 & 95.00 & 0.48 & 0.40&- \\ 
   %\hline
\end{tabular}
%\caption{\footnotesize
%average coverage and length over active and non-active set over $100$ realizations.\\
%* - oracle knows the zeros and performs (unpenalized) maximum likelihood
%\\
%** - target is the target coverage and the efficient asymptotic variance (see \cite{ae})
%}
%\end{table}
%\begin{table}[ht]
\vskip 0.6cm
%\scriptsize
\centering
\begin{tabular}{rlccccc}
%\multicolumn{6}{c}{Block 1: $(1,0.5,0.4)$, Block 2: $(2,1,0.6)$}  \\[0.1cm]
\multicolumn{7}{c}{$p=100,n=400$}  \\[0.1cm]
  \hline
	&     & \multicolumn{2}{c}{Coverage}& \multicolumn{2}{c}{Length} & Average $\lambda$\\ 
 & Method & $S_0$ & $S_0^c$ & $S_0$ & $S_0^c$ &\\ 
  \hline
	  %7 & target** & 95.00 & 95.00 & 0.48 & 0.40 \\ 
  %6 & oracle* & 94.95 & 95.00 & 0.49 & 0.41 \\ 
  1 & glasso & 84.28 & 97.53  & 0.27 & 0.23 & 0.049\\
  2 & glasso-weigh & 46.22 & 98.41 & 0.24 & 0.20 & 0.049\\
  3 & node-sqrt & 91.57 & 94.40 & 0.34 & 0.29 & 0.107\\
  4 & node-sqrt-tau & 87.11 & 97.13 & 0.28 & 0.24 & 0.107\\
  5 & node & 91.48 & 96.40 & 0.30 & 0.25 & 0.107\\
  6 & MLE & 41.42 & 91.29 & 0.41 & 0.38 & -\\\hline
  7 & oracle* & 94.87 & - & 0.34 & 0.29 & -\\
  8 & perfect* & 95.00 & 95.00 & 0.34 & 0.29 &-\\
   %\hline
\end{tabular}
\label{table:m1}
\end{table}

%HERE
\iffalse
% latex table generated in R 3.2.1 by xtable 1.8-0 package
% Sun Aug 21 12:52:24 2016
\begin{table}
\caption{ Average coverages and lengths of confidence intervals over $100$ realizations for selected individual entries of $\Theta_0$.
}
\begin{center}
\small
\centering
\begin{tabular}{rlllllllllllll}
  %\hline
	\\[-0.15cm]
	&Model 1& \multicolumn{6}{c}{Block 1} & \multicolumn{6}{c}{Block 2}\\
\cmidrule(lr){1-2}	\cmidrule(lr){3-8}\cmidrule(lr){9-14}
 &  & \multicolumn{2}{c}{$\Theta_{11}$} & \multicolumn{2}{c}{$\Theta_{12}$} & \multicolumn{2}{c}{$\Theta_{13}$}& \multicolumn{2}{c}{$\Theta_{50,50}$} & \multicolumn{2}{c}{$\Theta_{50,51}$} & \multicolumn{2}{c}{$\Theta_{50,52}$} \\
	\cmidrule(lr){3-4}\cmidrule(lr){5-6}\cmidrule(lr){7-8}\cmidrule(lr){9-10} \cmidrule(lr){11-12} \cmidrule(lr){13-14} 
 & Method & cov & len & cov & len & cov & len & cov & len & cov & len & cov & len \\ 
  \hline
1 & glasso & 81 & 0.26 & 92 & 0.20 & 88 & 0.19 & 80 & 0.26 & 99 & 0.25 & 99 & 0.24 \\ 
  2 & glasso-weigh & 92 & 0.24 & 65 & 0.17 & 55 & 0.16 & 85 & 0.23 & 99 & 0.24 & 99 & 0.23 \\ 
  3 & node-sqrt & 95 & 0.29 & 93 & 0.22 & 82 & 0.21 & 92 & 0.29 & 95 & 0.29 & 95 & 0.28 \\ 
  4 & node-sqrt-tau & 96 & 0.26 & 84 & 0.20 & 69 & 0.19 & 86 & 0.26 & 96 & 0.26 & 96 & 0.25 \\ 
  5 & node & 90 & 0.27 & 94 & 0.21 & 85 & 0.20 & 86 & 0.27 & 99 & 0.27 & 97 & 0.26 \\ \hline
  6 & oracle* & 95 & 0.28 & 96 & 0.22 & 92 & 0.21 & 90 & 0.28 & 98 & 0.28 & 96 & 0.28 \\ 
  7 & perfect* & 95 & 0.28 & 95 & 0.22 & 95 & 0.21 & 95 & 0.28 & 95 & 0.28 & 95 & 0.28 \\ 
   %\hline
\end{tabular}
\end{center}

%\caption{\footnotesize
%Average coverages and lengths over $100$ realizations for selected individual entries of $\Theta_0$.\\
%}
\label{table:m2}
\end{table}
\fi

\begin{table}
%\scriptsize
\centering
\caption[Average coverages and lengths of confidence intervals over the active set $S_0:=S\cup \{1,\dots,p\}$ and its complement $S_0^c$, over $100$ realizations. ]{
Average coverages and lengths of confidence intervals over the active set $S_0:=S\cup \{1,\dots,p\}$ and its complement $S_0^c$, over $100$ realizations. The average value of the tuning parameters is reported in the last column. The benchmark ``estimators'' are labeled by a star. The significance level is $0.05.$
%See the caption of Table \ref{table:m1}.
%average coverage and length over active and non-active set over $100$ realizations.\\
%* - oracle knows the zeros and performs (unpenalized) maximum likelihood
%\\
%** - target is the target coverage and the efficient asymptotic variance (see \cite{ae})
}
\begin{tabular}{rlccccc}
\multicolumn{7}{c}{Model 2}  \\[0.1cm]
\multicolumn{7}{c}{$p=100,n=400$}  \\[0.1cm]
  \hline
	&     & \multicolumn{2}{c}{Coverage}& \multicolumn{2}{c}{Length} & Average $\lambda$\\ 
 & Method & $S_0$&$S_0^c$  &$S_0$&$S_0^c$ \\ 
  \hline
  1 & glasso & 64.17 & 98.65 & 0.16 & 0.15& 0.067\\ 
  2 & glasso-weigh & 16.80 & 98.56 & 0.05 & 0.05 & 0.067\\ 
  3 & node-sqrt & 87.23 & 94.43 & 0.24 & 0.21 & 0.107\\ 
  4 & node-sqrt-tau & 89.81 & 97.23 & 0.20 & 0.18&0.107\\ 
  5 & node &  38.19 & 99.07 & 0.10 & 0.10&0.107\\ %& 63.42 & 97.68 & 0.17 & 0.16
	6 & MLE & 50.98 & 91.22 & 0.30 & 0.26& -\\ 
	\hline
  7 & oracle* & 98.51 & - & 0.23 & 0.20 &-\\ 
  8 & perfect* & 95.00 & 95.00 & 0.22& 0.20&-\\ 
   %\hline
\end{tabular}
\label{table:m3}
\end{table}

\begin{table}
%\scriptsize
\centering
\caption[Average coverage and length of confidence intervals  over all  the entries and an average value of the tuning parameter $\lambda$.
]{
Average coverage and length of confidence intervals  over all  the entries and an average value of the tuning parameter $\lambda$.
%average coverage and length over active and non-active set over $100$ realizations.\\
%* - oracle knows the zeros and performs (unpenalized) maximum likelihood
%\\
%** - target is the target coverage and the efficient asymptotic variance (see \cite{ae})
The significance level is $0.05.$
}
\begin{tabular}{rlccc}
\multicolumn{5}{c}{Model 3: $\Theta_{ij} = 0.5^{i-j}$}  \\[0.1cm]
\multicolumn{5}{c}{$p=100,n=200$}  \\[0.1cm]
  \hline
	&   Method  & \multicolumn{1}{c}{Coverage}& \multicolumn{1}{c}{Length} & Average $\lambda$\\ 
 %& Method &&&   \\ 
  \hline
  1 & glasso & 90.43  & 0.19 & 0.138\\ 
  2 & glasso-weigh &75.81  & 0.33 & 0.138\\ 
  3 & node-sqrt & 93.36  & 0.28 & 0.152\\ 
  4 & node-sqrt-tau & 92.91  & 0.22 & 0.152\\ 
  5 & node & 89.88  & 0.20 & 0.152\\ 
	6 & MLE & 80.41  & 0.56 & -\\ 
	\hline
  %7i & oracle* & - & - & - & - \\ 
  7 & perfect* & 95.00 & 0.28 &- \\ 
   %\hline
\end{tabular}
\label{table:m4}
\end{table}

%g& 64.17 & 98.65 & 0.16 & 0.15
%nsqrt & 87.23 & 94.43 & 0.24 & 0.21

%%%%%%%%%%%%%%%%%%%%
%%%%%%%%%%%%%%%%5
%%%%%%%%%%%%%%%%5
%%%%%%%%%%%%%%%5

\subsection{Discussion}
\label{sec:con}
We have shown several constructions of asymptotically linear estimators of the precision matrix (and inverse correlation matrix) based on regularized estimators, 
which immediately lead to inference in Gaussian graphical models.
Efficient algorithms are available for both methods as discussed in Section \ref{sec:comput.view}.
The constructed estimators %are asymptotically normal per entry, 
achieve entrywise estimation at the parametric rate and a rate of convergence of order $\sqrt{\log p/n}$ in supremum norm. 
%These obtained rates are minimax optimal under the given sparsity conditions (see \cite{zhou}).
%the efficient asymptotic variance from the parametric setting. For a detailed analysis of semi-parametric efficiency bounds in Gaussian graphical models, we refer to \cite{ae}. 
%We have shown several constructions of asymptotically linear estimators of the precision matrix (and inverse correlation matrix) based on regularized estimators. 
%By de-biasing the initial estimator we lose the sparsity of the estimator, however, we obtain entrywise estimation at the parametric rate. In supremum norm, we obtain a rate of convergence of order $\sqrt{\log p/n}$. These obtained rates are minimax optimal under the given sparsity conditions (see \cite{zhou}). 
%Moreover, the de-sparsified estimator can be used for construction of confidence intervals for entries of the precision matrix. One can test hypothesis about all the entries of the precision matrix by using a multiple-testing adjustment procedure (e.g. Bonferroni-Holm).
\par
To provide a brief comparison of the two methods analyzed above, 
both theoretical and computational results seem in favor of the de-sparsified nodewise Lasso. 
Theoretical results for nodewise Lasso in the regime $p\gg n$ only need the mild conditions \conds \ref{eig}, \ref{subgv} and $d=o(\sqrt{n}/\log p)$ and are uniform over the considered model. Moreover, the de-sparsified nodewise Lasso may be thresholded again to yield 
recovery of the set $S$ with no false positives, and under a beta-min type condition, exact recovery of the set $S$, asymptotically, with high probability.
The graphical Lasso requires that we impose the strong irrepresentability condition in the high-dimensional regime.
%Competing results for the graphical Lasso were obtained for the weighted estimator under Conditions \ref{eig}, \ref{subgv} and $s^{}\sqrt{d}=o(\sqrt{n}/\log p)$. 
However, the graphical Lasso might be preferred on the grounds that it does not decouple the likelihood. 
Moreover, the graphical Lasso estimator is always strictly positive definite and thus yields an estimator of the covariance matrix as well.
The invertibility of the nodewise Lasso has not yet been explored.
\par
We remark that the sparsity condition $d=o(\sqrt{n}/\log p)$ implied by our analysis is stronger than the condition needed for oracle inequalities and recovery, namely $d=o(n/\log p)$. However, one can show that this sparsity  condition is essentially necessary for asymptotically normal estimation. This follows by inspection of the minimax rates (see \cite{zhou}).

%For testing hypothesis about a set of edges, the usual multiple testing corrections may be used although in practical applications, these might turn out to be too conservative. More efficient methods for multiple testing in this setting are yet to be developed. 

 %%%%%%%%%%%55
%%%%%%%%%%%%%%%%%%%%%%%
%%%%%%%%%%%%%%%%%%%%%%55
%%%%%%%%%%%%%%%%%%%%%55
%%%%%%%%%%%%%%%%%%%%
%%%%%%%%%%%%%%%%%%%%

\section{Directed acyclic graphs}
\label{sec:dags}
In this section, we use the de-biasing ideas to construct confidence intervals for edge weights in directed acyclic graphs (abbreviated as DAGs).
A directed acyclic graph is a directed graph (we distinguish between edges $(j,k)$ and $(k,j)$) 
without directed cycles. %In contrast to undirected graphical models, they yield a (partial) ordering among the variables.
%For an overview of fundamental concepts on DAGs, 
%we refer to Chapter 1 in Part IV. 
We consider the Gaussian DAG model, where the DAG represents the probability distribution of a random vector  $(X_1,\dots,X_p)$ with a Gaussian distribution $ \mathcal N(0,\Sigma_0)$, where $\Sigma_0\in\mathbb R^{p\times p}$ is an unknown covariance matrix.
A Gaussian DAG may be 
represented by 
%consider a directed acyclic graph $D_0$ whose $p$ nodes correspond to random variables $X_1,\dots,X_p$ and we assume 
the linear structural equations model
$$X_j = \sum_{k\;\in \;{\text{pa}}(j)}\beta_{k,j}^0 X_k + \epsilon_j,\;\;\;\;\; j=1,\dots,p,$$
where $\epsilon_1,\dots,\epsilon_p$ are independent and $\epsilon_j\sim \mathcal N(0,(\omega^0_j)^2)$. 
The set ${\text{pa}}(j)$ is called the set of parents of a node $j$ and it contains all nodes $k\in\{1,\dots,p\}$ such that there exists a directed edge $k\rightarrow j$. % (this correponds to $\beta_{k,j}^0\not = 0$). % is  and we denote it by $\predz$. 
%As a consequence of the definition of the structural equations model, the acyclicity of the graph and independence of $\epsilon_j$'s,  
%$\epsilon_j$ is independent of $\{X_k, k\in {\text{pa}}(j)\}.$  
%If $\beta_{k,j}\not =0$, this means there exists a directed edge $k\rightarrow j.$
%Note  that $\predz={\text{pa}}_{D_0}(j)$ depends on the true DAG $D_0.$ 
\par
Our aim is to construct confidence intervals for edge weights $\beta_{k,j}^0$. However, without further conditions, 
the DAG and the $\beta_{k,j}^0$'s may not be identifiable from the structural equations model. 
To ensure identifiability, we assume that the error variances are equal: $\omega_j^0 = \omega_0$ for all $j=1,\dots,p.$ We remark that one might equivalently assume that the error variances are all known up to a multiplicative constant. 
In this setting, the DAG is identifiable as shown in \cite{jonas}. 
Our strategy is to use a two-step procedure: in the first step we use the estimator proposed in \cite{dag.sara} to estimate the ordering of the variables and in the second step, we use a de-biased version of nodewise regression to construct the confidence intervals.
% and we show how confidence intervals for edge weights may be constructed.   
\par
Given an $n\times p$ matrix $X=[X_1,\dots,X_p]$, with rows being $n$ independent observations from the structural equations model, one may rewrite the above model in a matrix form
$$X=XB_0+E,$$
where $B_0:=(\beta^0_{k,j})$ is a $p\times p $ matrix with $\beta^0_{j,j}=0$ for all $j$, and $E$ is an $n\times p$ matrix of noise vectors $E:=(\epsilon_1,\dots,\epsilon_p)$ with columns $\epsilon_j$ independent of $X_k$
whenever $\beta^0_{k,j}\not = 0.$ The rows of $E$ are independent $\mathcal N(0,\omega_0^2 I)$-distributed random vectors.
The model then implies that $X$ has covariance matrix
$$\Sigma_0=\omega_0^2((I-B_0)^{-1})^T (I-B_0)^{-1}.$$
%Thus the DAG may be  parametrized the pair $(B_0,\omega_0)$, which corresponds to $\Sigma_0.$
We define the precision matrix (assumed to exist) by $\Theta_0:= \Sigma_0^{-1}.$ Notice that
$$\Theta_0 = \frac{1}{\omega_0^2} (I-B_0) (I-B_0)^T.$$
 We further consider the class of precision matrices corresponding to DAGs.
That is, we let
$$\Theta:= \Theta(B,\omega) = \frac{1}{\omega^2}(I-B)(I-B)^T,$$
where $(B,\omega)$ is such that there exists a DAG representing the distribution $\mathcal N(0,\Sigma)$ with
$\Sigma = \omega^2((I-B)^{-1})^T (I-B)^{-1}$. This means that $\omega>0$ and $B$ can be written as a lower-diagonal matrix, up to permutation of rows.
Further we let $s_B$ denote the number of nonzero entries in $B$, which corresponds to the number of edges in the DAG.
Moreover, we denote by $\mathcal B$ the set of all edge weights $B$ of DAGs with parameters $(B,\omega)$ which have at most $\alpha n/\log p$
incoming edges (parents) at each node, where $\alpha>0$ is given.

\subsection{Maximum likelihood estimator with $\ell_0$-penalization}
In the first step, we use an $\ell_0$-penalized maximum likelihood estimator to estimate the DAG.
Let $\hat\Sigma = X^T X/n$
be the Gram matrix based on the design matrix $X.$
The minus log-likelihood is proportional to $\ell(\Theta) = \text{trace}(\Theta \hat\Sigma) - \log \text{det}(\Theta).$
Consider the penalized maximum likelihood estimator proposed in \cite{dag.sara},
\begin{eqnarray}\nonumber
(\hat B,\hat \omega) &:=& \text{argmin}_{B,\omega}\{\;
\ell(B,\omega) + \lambda^2 s_B: \Theta = \Theta (B,\omega),\text{ for some DAG}
\\\label{l0}
&& \;\; \;\;\;\;\;\;\;\;\;\;\;\;\;\;\;\;\;\text{with parameters } (B,\omega) \text{ where } B\in \mathcal B
\},
\end{eqnarray}
where $\lambda\geq 0$ is a tuning parameter.
The estimator is denoted  by $\hat\Theta = \Theta(\hat B, \hat \omega)$ and it has $\hat s:= s_{\hat B} $ edges.
Calculating the $\ell_0$-penalized maximum likelihood estimator over the class of DAGs is a computationally intensive task,
especially because it involves a search through a class of DAGs under a non-convex constraint of acyclicity of the graph and due to the $\ell_0$-penalty. 
For large scale problems, greedy algorithms may be used, see e.g. \cite{chickering, hauser}.
The reason for using the $\ell_0$-penalty instead of $\ell_1$-penalization in the definition of \eqref{l0} was discussed in \cite{dag.sara}.
The $\ell_1$-penalty leads to an objective function which is not constant
over equivalent DAGs encoding the same distribution. The $\ell_0$-penalization leads to invariant scores over equivalent DAGs.
The theoretical properties of $\hat\Theta$ were studied in \cite{dag.sara} under the conditions summarized below. We remark that the paper \cite{dag.sara} primarily studies the estimator \eqref{l0} with unequal variances and shows that the estimator converges to some member of the Markov equivalence class (cf. \cite{pearl2000}) of a DAG with a minimal number of edges, under certain conditions.
\par
To make their result precise, we define some further notions. For any vector $\beta\in\mathbb R^p$, let $\|X\beta\|:=(\beta^T\Sigma_0\beta)^{1/2}.$ By an ordering of variables we mean any permutation of the set $\{1,\dots,p\}.$ 
For any ordering of the variables, $\pi$, we let $\tilde B(\pi)$ be the matrix obtained by doing a Gram-Schmidt orthogonalization 
of the columns of $X$ in the ordering given by $\pi$, with respect to the norm $\|\cdot\|.$
Moreover, let $\tilde\Omega_0 (\pi) = (I-\tilde B_0(\pi))^T\Sigma_0(I-\tilde B_0(\pi))=
%Let $\tilde\Omega_0(\pi) =
\text{diag}((\tilde\omega_1^0(\pi))^2, \dots, (\tilde\omega_p^0(\pi))^2)$.
We restate the conditions assumed in \cite{dag.sara}.
\begin{condb}
%\begin{condition}
\label{itm:eig}%\label{eig}
There exists a universal constant $L\geq 1$ such that 
$$1/L \leq \Lambda_{\min}(\Sigma_0) \leq \Lambda_{\max}(\Sigma_0) \leq L.$$
%\end{condition}
\end{condb}
\begin{condb}
%\begin{condition}
\label{itm:omega}
 There exists a constant $\eta_\omega>0$ such that for all $\pi$ such that $\tilde\Omega_0(\pi)\not = \omega_0^2 I$ it holds
$$\frac{1}{p}\sum_{i=1}^p (|\tilde\omega_j^0(\pi)|^2 - \omega_0^2)^2 > 1/\eta_\omega. $$
%\end{condition}
\end{condb}
\begin{condb}
%\begin{condition}
\label{itm:ld}
There exists a sufficiently small constant $\alpha_*$ such that
$p\leq \alpha_* n/\log p.$
%\end{condition}
\end{condb}
Condition \ref{itm:omega} is an ``omega-min'' condition:
%The $\tilde \Omega_0(\pi)$ is the matrix of error variances when one uses the permutation $\pi$ and then  the Gram Schmidt orthogonalisation. 
it imposes that if one uses the wrong permutation then the error variances are far enough from being equal. 
\par
Under the above conditions, the $\ell_0$-penalized maximum likelihood estimator with high-probabi\-li\-ty correctly estimates the 
ordering of the variables as shown in \cite{dag.sara}.
Let $\pi_0$ be an ordering  of the variables such that a Gram-Schmidt orthogonalization of the columns of $X$ in the order given by $\pi_0$ with respect to the norm $\|\cdot\|$ yields $B_0.$
Denote the ordering of variables estimated by the $\ell_0$-penalized maximum likelihood estimator by $\hat \pi.$
Then the result in \cite{dag.sara} states that under \conds \ref{itm:eig}, \ref{itm:omega} and \ref{itm:ld}, with high-probability it holds that $\hat \pi = \pi_0.$

\subsection{Inference for edge weights}%De-biased nodewise Lasso}
Given that we have recovered the true ordering $\pi_0$, the problem reduces to estimation of regression coefficients in 
a nodewise regression model, where each variable is a function of a known set of its ``predecessors''.
Therefore to construct asymptotically normal estimators for the $\beta_{k,j}^0$'s, we may use a nodewise regression approach.

An estimated ordering $\hat \pi$  yields estimates $\predh$ of the predecessor sets for each node $j=1,\dots,p.$
If we have recovered the true ordering, that is $\hat\pi = \pi_0$, the estimated predecessor sets  $\predh$ are equal to the true predecessor sets, which are supersets of the parent sets $\text{pa}(j)$ for each $j=1,\dots,p$.
Consequently, 
%we base our construction of asymptotically normal estimators for the $\beta_{k,j}^0$'s on a nodewise regression approach. 
given the predecessor sets, we may obtain a new estimator for the edge weights
%for each $j\in \{1 ,\dots,p\}$ we 
by regressing the $j$-th variable $X_j$ on all its predecessors. 
The predecessor sets $\predh$ might be as large as $p-1$, therefore it is necessary to use regularization.
%We first apply the nodewise Lasso estimator and s
Then we use the de-biasing technique in a similar spirit as in Section \ref{sec:debias}.
We remark that in the initial step, one may use any estimator which guarantees exact recovery of the ordering $\pi_0$.

\par
For any non-empty subset $T\subseteq \{1,\dots,p\},$ we denote by $X_{T}$ the $n\times |T|$ matrix formed by taking the columns $X_k$ of $X$ such that $k\in T.$
We define the nodewise regression estimator as proposed in \cite{jvdgeer15} (altenatively, one may use the nodewise square-root Lasso studied in Section \ref{sec:node})
\vskip 0.0cm
\begin{equation}\label{beta.dag}
\hat\beta_{{j}} = \operatornamewithlimits{argmin}\limits_{\beta\in\mathbb R^{|\predh|}}\|X_j - X_{\predh}\beta\|_2^2 / n + 2\lambda_j \|\beta\|_1.
\end{equation}
\vskip 0.2cm
\noindent
The Karush-Kuhn-Tucker conditions for the above optimization problem give
$$-X_{\predh}^T(X_j - X_{\predh}\hat\beta_{{j}})/n+\lambda_j\hat Z_j=0,$$
where the entries of $\hat Z_j$ satisfy $\hat Z_{k,j} = \text{sign}(\hat\beta_{k,j})$ if $\hat\beta_{k,j} \not =0$,  and $\|\hat Z_j\|_\infty \leq 1$ ($\hat\beta_{k,j}$ denotes the $k$-th entry of $\hat\beta_j$).
Similarly as in the case of undirected graphical models, we can define a de-biased estimator. 
The Hessian matrix of the risk function in \eqref{beta.dag} is given by
\vskip 0.1cm
$$\hat\Sigma_{\predh} := X_{\predh}^TX_{\predh}/n.$$
\vskip 0.1cm
%we can rewrite the KKT conditions into the form 
%$$-X_{\predh}^T \epsilon_j/n + \hat\Sigma_{\predh}(\hat\beta_{\predh}-\beta_{\predh}^0) + \lambda_j\hat Z_j=0.$$
%Similarly as proposed in Section \ref{sec:debias}, we can define the de-biased estimator. 
To find a surrogate inverse for $\hat\Sigma_{\predh}$, we construct
 $\hat\Theta_{\predh}$ %which is a surrogate inverse of $\hat\Sigma_{\predh}$ 
 using the nodewise Lasso with tuning parameters $\lambda_{k,j}$ for $k\in{\predh}$. 
Using $\hat\Theta_{\predh}$, we define the de-biased estimator
\vskip 0.1cm
\begin{equation}\label{b}
\hat b_{{j}}: =
% \hat\beta_{k,j} + \lambda_j \hat\Theta_{\predh,k}^T \hat Z =
\hat\beta_{{j}} + \hat\Theta_{\predh}^T X_{\predh}^T(X_j - X_{\predh}
\hat\beta_{{j}})/n.
\end{equation}
\vskip 0.1cm
Theorem \ref{dag} below shows that the entries of the de-biased estimator are asymptotically normal.
To formulate the result, we define $\Theta^0_{\predz}$ to be the matrix obtained by taking the rows and columns of $\Theta_0$ contained in 
the true predecessor set $\predz$. Denote the $k$-th column of $\Theta^0_{\predz}$ by $\Theta^0_{\predz,k}$.
%\begin{eqnarray*}
%\sigma_{k,j}^2 
%&=&
%\omega_0^2 (\Theta^0_{\predz,k})^T \mathbb E X_{\predz}^1 (X_{\predz}^1)^T \Theta^0_{\predz,k}
%\\
%&=& 
%\omega_0^2 (\Theta^0_{\predz,k})^T \Sigma^0_{\predz} \Theta^0_k 
%\\
%&=&
 %\omega_0^2 \Theta^0_{\predz, kk} 
%,
%\end{eqnarray*}
%(where we used that  $\epsilon_j$ is independent
%of $\{X_k, k\in \predz\}$).
%Consider $(\Theta_i^0, \beta_j^0)^T X_1 \sim \mathcal N_2(0,S)$, where 
%\begin{eqnarray*}
%S_{11} &=& \text{var}(\Theta_i^T X_1) =(\Theta_i^0)^T \Sigma_0 \Theta_i^0 = \Theta_{ii}^0,\\
%S_{12} &=& S_{21} = \text{cov}(\Theta^0_i X_1, \beta_j^0 X_1) = (\Theta_i^0)^T\Sigma_0 \beta_{j}^0 = 
%e_i^T\beta_j = \beta^0_{ij},\\
%S_{22} &=& \text{var}(\beta_i^T X_1) = \beta_j^0 \Sigma_0\beta_j^0.
%\end{eqnarray*}
%Then by classical relations on multivariate normal vectors, we have that
%$$\sigma^2_{ij}:= \text{var}((\Theta_i^0)^T X_1X_1^T \beta_j^0)=\Theta_{ii}^0 (\beta_j^0)^T \Sigma_0 \beta_j^0 + \beta_{ij}^2.$$
%
To provide asymptotically normal estimators for the parameters $\beta_{\predz}^0 =(\beta^0_{k,j}:k\in\predz)$, we need to impose a sparsity condition on the sizes of the parent sets, which will be denoted by $d_j =|\text{pa}(j)|$. % and let $\tilde s_j := \max_{k\in \predz}\|\Theta_{\predz,k}^0\|_0$.
\begin{theorem}[Regime $p\leq n$]\label{dag}
Let $\hat B$ be the  estimator defined by \eqref{l0} with $\lambda\asymp \sqrt{\log p /n}$ and denote the  predecessor sets estimated based on $\hat B$ by $ \hat{{\text{p}}}(j)$ for $ j=1,\dots,p.$
Let $\hat b_{{j}}$ be defined in \eqref{b}  with sufficiently large tuning parameters $\lambda_j\asymp \lambda_{k,j}\asymp\sqrt{\log p /n}$, uniformly in $j,k$, where $k\in\predh$.
Assume \conds \ref{itm:eig}, \ref{itm:omega} and \ref{itm:ld} are satisfied with  $1/(|\alpha_*| + |\eta_{\omega}|)=\mathcal O(1)$ and assume that $d_j =o(\sqrt{n}/\log p)$.
Then it holds
\begin{equation}
 \hat b_{{j}} -  \beta_{\predz}^0 \;=\; (\Theta_{\predz}^0)^T X_{\predz}^T \epsilon_j/n\; + \;\emph{rem},
\end{equation}
where
$$\|\emph{rem} \|_\infty 
%(\hat\Theta_k-\Theta^0_k)^T X_{\predh}^T \epsilon_j/n - (\hat\Theta_k^T\hat\Sigma_{\predh} - e_k)(\hat\beta_{j}-\beta_j^0)
 =o_P(1/\sqrt{n}).$$
Furthermore, for every $k\in \predz,$
$$\sqrt{n}(\hat b_{k,j} - \beta^0_{k,j})/\sigma_{k,j} \rightsquigarrow \mathcal N(0,1 ),$$
where the asymptotic variance of the de-sparsified estimator is given by 
$$\sigma_{k,j}^2:=
 n\emph{var}((\Theta^0_{\predz,k})^TX_{\predz}^T \epsilon_{j})
= \omega_0^2 (  \Theta^0_{\predz}   )_{kk}.$$
\end{theorem}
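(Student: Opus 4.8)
The plan is to pass, on a high-probability event, to a de-biased Lasso computation in an ordinary linear regression. By the recovery result of \cite{dag.sara} quoted above, under \conds \ref{itm:eig}--\ref{itm:ld} with $1/(|\alpha_*|+|\eta_\omega|)=\mathcal O(1)$ and $\lambda\asymp\sqrt{\log p/n}$, the event $\mathcal A:=\{\hat\pi=\pi_0\}$ satisfies $\mathbb P(\mathcal A)\to 1$, and on $\mathcal A$ the estimated predecessor sets are exactly the true ones, $\predh=\predz$ for all $j$. Since an event of vanishing probability does not affect a limit in distribution, it suffices to establish the expansion and the central limit theorem on $\mathcal A$. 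There we are exactly in the de-biasing setup of Section~\ref{sec:debias} applied to the linear model $X_j=X_{\predz}\beta_{\predz}^0+\epsilon_j$: $\beta_{\predz}^0$ really is the population regression coefficient of $X_j$ on $X_{\predz}$, because every predecessor of $j$ is a non-descendant and hence $\mathbb E[X_{\predz}\epsilon_j]=0$; moreover $\beta_{\predz}^0$ has only $d_j=|\mathrm{pa}(j)|$ nonzero entries, $\epsilon_j$ is independent of $X_{\predz}$, and $\mathrm{var}(\epsilon_j)=\omega_0^2$ by the equal-variance assumption.

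First I would substitute $X_j-X_{\predz}\hat\beta_j=X_{\predz}(\beta_{\predz}^0-\hat\beta_j)+\epsilon_j$ into the definition \eqref{b} of $\hat b_j$; on $\mathcal A$, writing $\hat\Sigma_{\predz}=X_{\predz}^TX_{\predz}/n$, this gives the identity
$$\hat b_j-\beta_{\predz}^0=\big(I-\hat\Theta_{\predz}^T\hat\Sigma_{\predz}\big)\big(\hat\beta_j-\beta_{\predz}^0\big)+\hat\Theta_{\predz}^TX_{\predz}^T\epsilon_j/n,$$
which I would split further as $(\Theta_{\predz}^0)^TX_{\predz}^T\epsilon_j/n$ (the asserted linear term) plus $\mathrm{rem}:=\big(I-\hat\Theta_{\predz}^T\hat\Sigma_{\predz}\big)\big(\hat\beta_j-\beta_{\predz}^0\big)+\big(\hat\Theta_{\predz}-\Theta_{\predz}^0\big)^TX_{\predz}^T\epsilon_j/n$, where $\Theta_{\predz}^0$ is the precision matrix of the sub-vector $X_{\predz}$, i.e.\ the population limit of the surrogate inverse $\hat\Theta_{\predz}$.

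The bound $\|\mathrm{rem}\|_\infty=o_P(1/\sqrt n)$ comes from two applications of H\"older's inequality, as in Section~\ref{sec:debias}. For the bias piece, $\|(I-\hat\Theta_{\predz}^T\hat\Sigma_{\predz})(\hat\beta_j-\beta_{\predz}^0)\|_\infty\le\|I-\hat\Theta_{\predz}^T\hat\Sigma_{\predz}\|_\infty\,\|\hat\beta_j-\beta_{\predz}^0\|_1$: the first factor is $\mathcal O(\max_k\lambda_{k,j})=\mathcal O(\sqrt{\log p/n})$ by the KKT conditions of the nodewise Lasso building $\hat\Theta_{\predz}$, and $\|\hat\beta_j-\beta_{\predz}^0\|_1=\mathcal O_P(d_j\sqrt{\log p/n})$ is the $\ell_1$ oracle bound for the Lasso \eqref{beta.dag}. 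For the noise piece, $\|(\hat\Theta_{\predz}-\Theta_{\predz}^0)^TX_{\predz}^T\epsilon_j/n\|_\infty\le\vertiii{\hat\Theta_{\predz}-\Theta_{\predz}^0}_1\,\|X_{\predz}^T\epsilon_j/n\|_\infty$, with $\vertiii{\hat\Theta_{\predz}-\Theta_{\predz}^0}_1=\mathcal O_P(d_j\sqrt{\log p/n})$ (the $\ell_1$-operator-norm oracle bound for the nodewise Lasso on $X_{\predz}$) and $\|X_{\predz}^T\epsilon_j/n\|_\infty=\mathcal O_P(\sqrt{\log p/n})$ by a sub-Gaussian maximal inequality. \condi \ref{itm:eig} bounds the spectrum of the principal submatrix $\Sigma_{\predz}^0$, which together with \condi \ref{itm:ld} and Gaussianity lets the empirical sub-Gram matrix inherit a compatibility / restricted-eigenvalue condition, so all these oracle bounds are available. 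Multiplying, each piece is $\mathcal O_P(d_j\log p/n)$, which is $o_P(1/\sqrt n)$ precisely under $d_j=o(\sqrt n/\log p)$.

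Finally, on $\mathcal A$, $\sqrt n\,(\hat b_{k,j}-\beta_{k,j}^0)=n^{-1/2}\sum_{i=1}^n(\Theta_{\predz,k}^0)^TX_{\predz}^{(i)}\epsilon_j^{(i)}+o_P(1)$, a normalized sum of i.i.d.\ mean-zero terms (mean zero since $\epsilon_j\perp X_{\predz}$) with variance $\sigma_{k,j}^2=\omega_0^2(\Theta_{\predz,k}^0)^T\Sigma_{\predz}^0\Theta_{\predz,k}^0$, bounded away from $0$ and $\infty$ by \condi \ref{itm:eig}; Lindeberg's central limit theorem and Slutsky's lemma then give $\sqrt n(\hat b_{k,j}-\beta_{k,j}^0)/\sigma_{k,j}\rightsquigarrow\mathcal N(0,1)$, and the closed form $\sigma_{k,j}^2=\omega_0^2(\Theta_{\predz}^0)_{kk}$ follows from $\Theta_{\predz}^0\Sigma_{\predz}^0\Theta_{\predz}^0=\Theta_{\predz}^0$, since $X_{\predz}$ is itself a Gaussian DAG with error variance $\omega_0^2$ and $\Theta_{\predz}^0$ inverts $\Sigma_{\predz}^0$. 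The main obstacle is the coupling of the two stages: because $\hat\pi$ is built from the same sample, the regressor sets $\predh$ are random, and the clean way around this is the exact-recovery statement $\mathbb P(\hat\pi=\pi_0)\to 1$, which lets us condition on $\mathcal A$ and treat the second stage as a de-biased Lasso with deterministic regressor sets; after that the real work is the simultaneous oracle control of $\hat\beta_j$ and $\hat\Theta_{\predz}$, which is exactly where $d_j=o(\sqrt n/\log p)$ --- stronger than the $d_j=\mathcal O(n/\log p)$ needed for estimation and ordering recovery alone --- is used.
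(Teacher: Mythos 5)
Your proposal is correct and follows essentially the same route as the paper's proof: reduce to the event $\{\hat\pi=\pi_0\}$ via the exact-recovery result of \cite{dag.sara}, use the identical decomposition $\hat b_j-\beta^0_{\predz}=(I-\hat\Theta_{\predz}^T\hat\Sigma_{\predz})(\hat\beta_j-\beta^0_{\predz})+\hat\Theta_{\predz}^TX_{\predz}^T\epsilon_j/n$ (the paper writes it entrywise), bound the two remainder pieces by H\"older's inequality with the same three oracle bounds, and conclude by Lindeberg's CLT. Your explicit identification of $\Theta^0_{\predz}$ as the inverse of the marginal covariance of $X_{\predz}$ (rather than a literal submatrix of $\Theta_0$) is the reading under which the stated variance formula $\sigma_{k,j}^2=\omega_0^2(\Theta^0_{\predz})_{kk}$ actually holds, so no gap there.
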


\noindent
The result of Theorem \ref{dag} can be used to construct confidence intervals for the edge weights $\beta_{k,j}^0$. 
To estimate the asymptotic variance, we may define 
$\hat\omega_j^2:=\|X_j - X_{\widehat{\text{p}}(j)}\hat\beta_{j}\|_2^2/n$ and $\hat\sigma^2_{k,j}: = \hat\omega_j^2 
(\hat \Theta_{\predh})_{kk} .$ The consistency of this estimator may be easily checked. 
%\begin{lemma}\label{var}
%Suppose that conditions of Theorem \ref{dag} are satisfied and that $\omega_0=\mathcal O(1)$ and $1/\omega_0=\mathcal O(1)$.
%Then 
%$|\hat\sigma_{k,j}^2-\sigma_{k,j}^2| = \mathcal O_P(\sqrt{d}\lambda).$
%\end{lemma}

%\begin{cor}
%Suppose that conditions of Lemma \ref{var} are satisfied.
%Let 
%$$I_\alpha := [\hat b_{k,j} \pm \Phi^{-1}(1-\alpha/2)\hat\sigma_{k,j}/\sqrt{n}],$$
%then
%$$\lim_{n\rightarrow \infty}\mathbb P(\beta_{j,k}^0 \in I_\alpha) \geq 1-\alpha.$$
%\end{cor}

\section{Conclusion}
We have provided a unified approach to construct asymptotically linear and normal estimators of low-dimensional parameters of the precision matrix based on regularized estimators. These estimators allow us to construct confidence intervals for edge weights in high-dimensional Gaussian graphical models and, under an identifiability condition, for edge weights in the high-dimensional Gaussian DAG model.
%By de-biasing the initial estimator we lose the sparsity of the estimator, however, we obtain entrywise estimation at the parametric rate.
%Moreover, the de-biased estimator can be used for construction of confidence intervals for entries of the precision matrix. One can test hypothesis about all the entries of the precision matrix by using a multiple-testing adjustment procedure (e.g. Bonferroni-Holm).
\par 
For Gaussian graphical models, we provided two explicit simple constructions: one based on a global method using the graphical Lasso
and the second based on a local method using nodewise Lasso regressions. Efficient computational methods are available for both methods as discussed in Section \ref{sec:comput.view}.
The constructed estimators are asymptotically normal per entry, achieving the efficient asymptotic variance from the parametric setting. For a detailed analysis of semi-parametric efficiency bounds in Gaussian graphical models, we refer to \cite{ae}. 
For testing hypothesis about a set of edges, the usual multiple testing corrections may be used although in practical applications, these might turn out to be too conservative. More efficient methods for multiple testing in this setting are yet to be developed. 
While throughout the presented results we have imposed ``exact'' sparsity constraints on the underlying parameters, we remark that the results might as well be extended to models which are only approximately sparse (see e.g. \cite{hds}).
\par
Our main interest lied in developing methodology for graphical models representing continuous random vectors. However, many applications involve \emph{discrete}  graphical models, where random variables $X_j$ at each vertex $j\in  \mathcal V$ take values in a discrete space.
A popular family of distributions for the binary case where $X_j\in \{-1,1\}$  is the Ising model. 
This model finds applications in statistical physics, neuroscience or modeling of social networks.
The Ising model can be efficiently estimated via a nodewise method: the individual neighbourhoods can be estimated with $\ell_1$-penalized logistic regression as proposed in \cite{ravikumar2010}. 
Logistic regression falls into the framework of generalized linear models for which the de-biasing methodology was proposed in \cite{vdgeer13}. Consequently, one may compute the neighbourhood estimator via $\ell_1$-penalized logistic regression and then compute the de-biased estimator along the lines of \cite{vdgeer13}.

\par
For directed acyclic graphs, we showed that confidence intervals for edge weights may be constructed for the Gaussian DAG when it is identifiable and $p\leq \alpha_* n/\log p$. To this end, we require that the error variances in the structural equations model are equal, or known up to a multiplicative constant. 
If the variance are not equal, the model may not be identifiable and work on inference in this setting is yet to be developed.

%\appendix
\section{Proofs}
\subsection{Proofs for undirected graphical models}

\begin{lemma}\label{taylor}
Assume that $1/L\leq \Lambda_{\min}(\Theta_0) \leq \Lambda_{\max}(\Theta_0)\leq L$ for some constant $L\geq 1.$
Let $\mathcal E (\Delta) := \emph{tr}[\Delta\Sigma_0] 
- [\log \emph{det}(\Delta+\Theta_0) - \log \emph{det}(\Theta_0)]$.
Then for all $\Delta$ such that $\|\Delta\|_F \leq 1/(2L)$, $\mathcal E(\Delta)$ is well defined and 
\begin{equation}
\label{deto}
\mathcal E(\Delta) \geq \frac{1}{2({L} + 1/(2L))^2}\|\Delta\|_F^2.
\end{equation}

\end{lemma}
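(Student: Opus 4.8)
The plan is to recognise $\mathcal E(\Delta)$ as the exact second-order Taylor remainder of the smooth convex function $f(\Theta):=\text{tr}(\Theta\Sigma_0)-\log\det\Theta$ at the point $\Theta_0$. Since $\nabla f(\Theta)=\Sigma_0-\Theta^{-1}$ and $\Theta_0^{-1}=\Sigma_0$, the gradient vanishes at $\Theta_0$, so $\mathcal E(\Delta)=f(\Theta_0+\Delta)-f(\Theta_0)$ with no linear contribution — it is the Bregman divergence of $f$ at $\Theta_0$ and in particular nonnegative once the argument stays positive definite.

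First I would settle well-definedness. For (symmetric) $\Delta$ with $\|\Delta\|_F\leq 1/(2L)$, Weyl's inequality gives $\Lambda_{\min}(\Theta_0+t\Delta)\geq \Lambda_{\min}(\Theta_0)-\|\Delta\|_F\geq 1/L-1/(2L)=1/(2L)>0$ for every $t\in[0,1]$, using that the largest absolute eigenvalue of a symmetric matrix is at most its Frobenius norm. Hence the whole segment $\{\Theta_0+t\Delta:t\in[0,1]\}$ lies in the positive-definite cone; in particular $\log\det(\Theta_0+\Delta)$ is well defined, so $\mathcal E(\Delta)$ is well defined, and Taylor's theorem with integral remainder applies along this segment.

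Next I would write the remainder explicitly,
$$\mathcal E(\Delta)=\int_0^1(1-t)\,\text{tr}\big((\Theta_0+t\Delta)^{-1}\Delta(\Theta_0+t\Delta)^{-1}\Delta\big)\,dt,$$
using the standard fact that the Hessian of $-\log\det$ at $A$ acts as $\Delta\mapsto \text{tr}(A^{-1}\Delta A^{-1}\Delta)$, while the linear term $\text{tr}(\Theta\Sigma_0)$ contributes nothing to the second derivative. For symmetric $\Delta$ the integrand equals $\|(\Theta_0+t\Delta)^{-1/2}\Delta(\Theta_0+t\Delta)^{-1/2}\|_F^2$, and I would bound it below via the elementary inequality $\|MAM\|_F\geq \Lambda_{\min}(M)^2\|A\|_F$ for symmetric positive-definite $M$ (apply $\|MB\|_F\geq \Lambda_{\min}(M)\|B\|_F$ twice, transposing in between), taking $M=(\Theta_0+t\Delta)^{-1/2}$ so that $\Lambda_{\min}(M)^2=\Lambda_{\max}(\Theta_0+t\Delta)^{-1}$.

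Finally, Weyl's inequality again gives $\Lambda_{\max}(\Theta_0+t\Delta)\leq \Lambda_{\max}(\Theta_0)+\|\Delta\|_F\leq L+1/(2L)$ uniformly in $t\in[0,1]$, so the integrand is at least $\|\Delta\|_F^2/(L+1/(2L))^2$; since $\int_0^1(1-t)\,dt=1/2$, this yields precisely $\mathcal E(\Delta)\geq \|\Delta\|_F^2/(2(L+1/(2L))^2)$, which is \eqref{deto}. I do not expect a genuine obstacle here: the only points requiring a little care are the curvature identity for $-\log\det$ and the two-sided eigenvalue perturbation bounds for $\Theta_0+t\Delta$ along the segment, both of which are routine.
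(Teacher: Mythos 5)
Your argument is correct and is essentially the paper's own proof: the same Taylor expansion with integral remainder for $-\log\det$ around $\Theta_0$ (where the linear term cancels against $\mathrm{tr}(\Delta\Sigma_0)$ since $\Theta_0^{-1}=\Sigma_0$), the same eigenvalue perturbation bounds $\Lambda_{\min}(\Theta_0+t\Delta)\geq 1/(2L)$ and $\Lambda_{\max}(\Theta_0+t\Delta)\leq L+1/(2L)$ along the segment, and the same final constant. The only cosmetic difference is that you lower-bound the Hessian quadratic form via $\|MAM\|_F\geq \Lambda_{\min}(M)^2\|A\|_F$ in trace form, whereas the paper works with the Kronecker-product representation $(\Theta_0+v\Delta)^{-1}\otimes(\Theta_0+v\Delta)^{-1}$ acting on $\mathrm{vec}(\Delta)$; these are equivalent.
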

\begin{proof}[Proof of Lemma \ref{taylor}]
First we show that $\mathcal E(\Delta)$ is well defined for all $\Delta$ such that $\|\Delta\|_F \leq 1/(2L).$
To this end, we need to check that $\Lambda_{\min}(\Theta_0 + \Delta)\geq c_1$ for some $c_1>0.$ Denote the spectral norm of a matrix $M$ by $\|M\|:=\sqrt{\Lambda_{\max}(MM^T)}.$
We have
$$\Lambda_{\min}(\Theta_0 + \Delta) = \min_{\|x\|_2=1}x^T(\Theta_0 + \Delta) x
\geq 
%\Lambda_{\min}(\Theta_0)x^Tx - \sqrt{\Lambda_{\max}(\Delta^2)}{x^Tx},
%\min_{\|x\|=1}(\Lambda_{\min}(\Theta_0) - \|\Delta\|_F){x^Tx},
\Lambda_{\min}(\Theta_0) - \|\Delta\|_F
%\geq 1/L-\epsilon
\geq 1/(2L),
$$
where we used that $|x^T\Delta x|\leq \|\Delta\|x^Tx$ and that $\|\Delta\|\leq \|\Delta\|_F$.
\\
%Denote $\Delta:=\Theta-\Theta_0.$
A second order Taylor expansion with remainder in integral form yields
\begin{eqnarray*}
&&\log \text{det}(\Delta+\Theta_0) - \log \text{det}(\Theta_0) 
\\
&&=
\text{tr}(\Delta\Sigma_0) - \text{vec}(\Delta) ^T 
\left(\int_0^1 (1-v)(\Theta_0 + v \Delta)^{-1} \otimes (\Theta_0 + v \Delta)^{-1} dv\right)
\text{vec}(\Delta).
\end{eqnarray*}
Then for all $\Delta$ such that $\|\Delta\|_F\leq 1/(2L),$ it holds
$$\mathcal E(\Delta) = \text{vec}(\Delta) ^T 
\left(\int_0^1 (1-v)(\Theta_0 + v \Delta)^{-1} \otimes (\Theta_0 + v \Delta)^{-1} dv\right)
\text{vec}(\Delta),$$
where $\otimes $ denotes the Kronecker product and the remainder in the Taylor expansion is in the integral form.
Using the fact that the eigenvalues of Kronecker product of
symmetric matrices is the product of eigenvalues of the factors, it follows for all 
$\Delta$ such that $\|\Delta\|_F\leq 1/(2L)$ % (for some $\epsilon>0$ specified later)
\begin{eqnarray*}
&&\Lambda_{\min}\left(\int_0^1 (1-v)(\Theta_0 + v \Delta)^{-1} \otimes (\Theta_0 + v \Delta)^{-1} dv\right)
\\
&\geq & 
\int_0^1  (1-v) \Lambda_{\min}^2((\Theta_0 + v \Delta)^{-1}) dv\\
&\geq  &
\frac{1}{2} \min_{0\leq v \leq 1}\Lambda_{\min}^2((\Theta_0 + v \Delta)^{-1})\\
&\geq &
\frac{1}{2} \min_{\Delta:\|\Delta\|_F\leq 1/(2L)}\Lambda_{\min}^2((\Theta_0 + \Delta)^{-1})
.
\end{eqnarray*}
Next we obtain
%Taking $\epsilon\leq 1/(2L)$ we obtain  
%
%
\begin{eqnarray*}
 \Lambda_{\min}^2((\Theta_0 + \Delta)^{-1}) = \Lambda_{\max}^{-2}(\Theta_0 + \Delta)
\geq 
(\|\Theta_0\| + \|\Delta\|)^{-2} \geq \frac{1}{({L} + 1/(2L))^2}>0,
\end{eqnarray*}
where we used $\|\Delta\|\leq \|\Delta\|_F\leq 1/(2L)$. 
Finally this yields that
$\mathcal E(\Delta)\geq \frac{1}{2({L} + 1/(2L))^2} \|\Delta\|_F^2$ for all $\|\Delta\|_F\leq 1/(2L)$, as required.
\end{proof}

\begin{proof}[Proof of Theorems \ref{glasso.rate} and \ref{norm.glasso.rate}]
We will prove both Theorem \ref{glasso.rate} and Theorem \ref{norm.glasso.rate} at the same time, since the proofs only differ slightly.
For the proof of Theorem \ref{norm.glasso.rate}, one has to replace $\hat\Sigma$, $\Sigma_0$, $\hat\Theta,$ $\Theta_0$ in the proof below by $\hat\Gamma$, $\Gamma_0$, $\hat\Theta_{\text{norm}},$ $K_0$, respectively. 
\vskip 0.2cm
%We will first localize the problem in Frobenius norm to ensure a margin condition is satisfied.
Let $\tilde\Theta:= \alpha \hat\Theta+(1-\alpha)\Theta_0$, where $\alpha := \frac{M}{M+\|\hat\Theta-\Theta_0\|_F},$ for some $M>0$ to be specified later. %\footnote{Another way of proving the result could be to first derive the rates in Frobenius norm and only then for the $\ell_1$-norm.} %, such that $M=M_n\rightarrow 0$ as $n\rightarrow \infty.$ 
The definition of $\tilde\Theta$ implies that $\|\tilde\Theta-\Theta_0\|_F \leq M.$
By the convexity of the loss function and by the definition of $\hat\Theta$, we have
\begin{equation}
\label{ineq0}
\text{tr}(\tilde\Theta\hat\Sigma) -\log \text{det}(\tilde\Theta)  +\lambda\|\tilde\Theta^-\|_1 
 \leq \text{tr}(\Theta_0\hat\Sigma) -\log \text{det}(\Theta_0) +\lambda\|\Theta^-_0\|_1.
\end{equation}
Denote $\Delta=\tilde\Theta-\Theta_0$ and let
$$\mathcal E(\Delta) := \text{tr}(\Delta\Sigma_0) 
- [\log \text{det}(\Delta+\Theta_0) - \log \text{det}(\Theta_0)].$$
The inequality \eqref{ineq0} implies the basic inequality
\begin{eqnarray*}
\mathcal E(\Delta)
+\lambda\|\tilde\Theta^-\|_1 
\leq 
-\text{tr}[\Delta(\hat\Sigma-\Sigma_0)] 
+ \lambda \|\Theta_0^-\|_1
.
\end{eqnarray*}
On the set $\{\|\hat\Sigma-\Sigma_0\|_\infty \leq \lambda_0\}$, we can bound the empirical process term by 
\begin{eqnarray}
\nonumber
|\text{tr}[\Delta(\hat\Sigma-\Sigma_0)] |
 &\leq & 
\|\hat\Sigma^--\Sigma_0^-\|_\infty \|\Delta^-\|_1 + \|\hat\Sigma^+ - \Sigma_0^+\|_2\|\Delta^+\|_F
\\\label{rando}
&\leq &
\lambda_0 \|\Delta^-\|_1 + \|\hat\Sigma^+ - \Sigma_0^+\|_2\|\Delta^+\|_F
. 
\end{eqnarray}
%Under Conditions \ref{eig} and \ref{subgv}, for $\lambda_0\asymp \sqrt{\log p/n}$, the set  $\{\|\hat\Sigma-\Sigma_0\|_\infty \leq \lambda_0\}$ has high probability. 
In what follows, we work on the set $\{\|\hat\Sigma-\Sigma_0\|_\infty \leq \lambda_0\}$.
\\
We now choose $M$ such that $M\leq 1/(2L),$ this then implies 
% $\|\tilde\Theta-\Theta_0\|_F\leq M\rightarrow 0$ for $n\rightarrow \infty$, then for $n$ large enough, 
$\|\tilde\Theta-\Theta_0\|_F\leq 1/(2L).$ But then Lemma \ref{taylor} implies that 
$\mathcal E(\tilde\Theta-\Theta_0)$ is well defined and  
\begin{equation}
\label{det}
\mathcal E(\tilde\Theta-\Theta_0) \geq c\|\tilde\Theta-\Theta_0\|_F^2,
\end{equation}
where one can take $c:=1/(8L^2).$
Using  bounds \eqref{rando} and \eqref{det}, we obtain from the basic inequality
\begin{eqnarray*}
&& c \|\Delta\|_F^2 
+\lambda \|\tilde\Theta^-\|_1 \leq \lambda_0\|\Delta^-\|_1 +\lambda \|\Theta_0^-\|_1 + \|\hat\Sigma^+ - \Sigma_0^+\|_2\|\Delta^+\|_F
%\\
%&=&
 %\frac{1}{4}c \|\Delta\|_F^2 - c\sqrt{\log p/n}(1-1/\epsilon)\|\Delta^-_{S^c}\|_1 -  c\sqrt{\log p/n}(1+1/\epsilon)\|\Delta^-_{S}\|_1
%-
%\sqrt{s\log p/n}\|\Delta^+\|_F
\end{eqnarray*}
By the triangle inequality and taking $\lambda \geq 2\lambda_0$, we obtain
\begin{eqnarray*}
&& 2c \|\Delta\|_F^2 
+\lambda \|\tilde\Theta_{S^c}^-\|_1 \leq 3\lambda \|\Delta_S^-\|_1  + 2\|\hat\Sigma^+ - \Sigma_0^+\|_2\|\Delta^+\|_F
\end{eqnarray*}
Consequently,
\begin{eqnarray*}
2c \|\Delta\|_F^2 
+\lambda \|\Delta^-\|_1  
&\leq & 4\lambda \|\Delta_S^-\|_1 + 2\|\hat\Sigma^+ - \Sigma_0^+\|_2\|\Delta^+\|_F
\\
&\leq & 4\lambda \sqrt{s} \|\Delta^-_S\|_F+ 2\|\hat\Sigma^+ - \Sigma_0^+\|_2\|\Delta^+\|_F
\\
&\leq &  
8 s\lambda ^2/c^2 + c\|\Delta^-_S\|_F^2/2 + 8\|\hat\Sigma^+ - \Sigma_0^+\|_2^2/c + c\|\Delta^+\|_F^2/2.
\end{eqnarray*}
Taking $M$ such that $\lambda_0 M \geq 8 s\lambda ^2/c^2 +8\|\hat\Sigma^+ - \Sigma_0^+\|_2^2/c,$
\begin{eqnarray*}
&& c \|\Delta\|_F^2 
+\lambda \|\Delta^-\|_1  \leq 8 s\lambda ^2/c^2 +8\|\hat\Sigma^+ - \Sigma_0^+\|_2^2/c\leq \lambda_0 M.
\end{eqnarray*}
%Taking $\lambda \geq 2\lambda_0$,
Taking $M\geq 4\lambda_0/c$,
\begin{eqnarray*}
&& \|\Delta\|_F^2 \leq \lambda_0 M/c\leq M^2/4.
\end{eqnarray*}
But then $\|\Delta\|_F\leq M/2.$
The definition of $\tilde\Theta$ in turn implies that $\|\hat\Theta-\Theta_0\|_F \leq M,$
and we can repeat all the arguments with $\hat\Theta$ in place of $\tilde\Theta.$
Repetition of the arguments leads to the oracle inequality
\begin{eqnarray*}
&& c \|\hat\Theta-\Theta_0\|_F^2 
+\lambda \|\hat\Theta^--\Theta_0^-\|_1  \leq 8 s\lambda ^2/c^2 +8\|\hat\Sigma^+ - \Sigma_0^+\|_2^2/c.
\end{eqnarray*}
Finally we distinguish the case of non-normalized graphical Lasso (based on the covariance matrix)
and the normalized graphical Lasso (based on the correlation matrix). We have for the case of
\begin{enumerate}[a)]
\item
\underline{normalized graphical Lasso}: $\hat\Sigma^+ - \Sigma_0^+=0$ (recall here that $\hat\Sigma\equiv\hat R, \Sigma_0\equiv R_0$) and the oracle inequality gives
\begin{eqnarray*}
&& c \|\hat\Theta-\Theta_0\|_F^2 
+\lambda \|\hat\Theta^--\Theta_0^-\|_1  \leq 8 s\lambda ^2/c^2.
\end{eqnarray*}

\item
\underline{non-normalized graphical Lasso}:
we can bound
 $$\|\hat\Sigma^+ - \Sigma_0^+\|_2 \leq \sqrt{p}\|\hat\Sigma^+ - \Sigma_0^+\|_\infty \leq \sqrt{p}\lambda_0.$$
Hence the oracle inequality gives %for non-normalized graphical Lasso reads
\begin{eqnarray*}
&& c \|\hat\Theta-\Theta_0\|_F^2 
+\lambda \|\hat\Theta^--\Theta_0^-\|_1  \leq 8 s\lambda ^2/c^2 +8{p}\lambda_0^2/c.
\end{eqnarray*}
\end{enumerate}
To show the second statement of the theorems, we use the above oracle inequalities and the following upper bound
\begin{eqnarray*}
 \vertiii{\hat\Theta_{\text{}}-\Theta_0}_1 
&\leq &
\max_{j=1,\dots,p}|\hat\Theta_{jj}-\Theta_{jj}^0| + \|\hat\Theta_j^- - (\Theta_j^0)^-\|_1
\\
&\leq &
\|\hat\Theta-\Theta_0\|_F + \|\hat\Theta^- - \Theta_0^-\|_1
%\sqrt{s}\sqrt{\log p/n}+s \sqrt{\log p/n}
.
\end{eqnarray*}
To show the third statement of Theorem \ref{norm.glasso.rate}, we use the upper bound
\begin{eqnarray*}
\vertiii{\hat\Theta_{\text{w}} -\Theta_0}_1 &=&
\vertiii{\hat W^{-1} \hat\Theta_{\text{norm}}\hat W ^{-1}- W_0^{-1}K_0 W_0^{-1}}_1
\\
&\leq & \|\hat W \|^2_\infty \vertiii{\hat\Theta_{\text{norm}} - K_0}_1 + \|\hat W -W_0\|_\infty \vertiii{K_0}_1 \|\hat W\|_\infty\\
&& + \|W_0\|_\infty \vertiii{K_0}_1\|\hat W -W_0\|_\infty
%=\mathcal O_P\left(s\sqrt{\log p/n}\right).
\end{eqnarray*}

\end{proof}

\begin{proof}[Proof of Theorem \ref{glasso.rem}]
%$$\text{rem}
%= 
%- (\hat\Theta_{}-\Theta_0)^T(\hat\Sigma\hat\Theta_{}- I) 
%- (\hat\Theta_{}-\Theta_0)^T(\hat \Sigma \Theta_0 - I).$$
Denote $C_L:=16(8L^2)^2.$
Using the results of Theorem \ref{glasso.rate}, we obtain
\begin{eqnarray*}
\|\text{rem}\|_\infty
&\leq & 
\|(\hat\Theta_{}-\Theta_0)^T(\hat \Sigma \Theta_0- I) \|_\infty
+
\|(\hat\Theta_{}-\Theta_0)^T\lambda \hat Z\hat \Theta\|_\infty
\\
&\leq &
\vertiii{\hat\Theta_{}-\Theta_0}_1 \|\hat \Sigma - \Sigma_0\|_\infty \vertiii{\Theta_0}_1
+
 \vertiii{\hat\Theta_{}-\Theta_0}_1 \lambda \|\hat Z\|_\infty \vertiii{\hat \Theta}_1
\\
&\leq &
C_L (p+s)\lambda \sqrt{d+1}\Lambda_{\max}(\Theta_0)\lambda_0 + 2C_L (p+s)\lambda  \sqrt{d+1}\Lambda_{\max}(\Theta_0) \lambda\\
&\leq &
\frac{3}{2}C_L L (p+s)\sqrt{d+1}\lambda^2.
%&=&
%\mathcal O_P((p+s)\log p/n) + \mathcal O_P((p+s)\sqrt{s}\log p/n) \\
%&=& \mathcal O_P((p+s)\sqrt{s}\log p/n).
\end{eqnarray*}
Taking $\lambda\asymp\sqrt{\log p/n},$ Lemma \ref{conc1} implies $\|\hat \Sigma - \Sigma_0\|_\infty = \mathcal O_P(\sqrt{\log p/n}).$
Then by the sparsity condition, we obtain $\|\text{rem}\|_\infty = \mathcal O_P(1/\sqrt{n}).$
By \conds \ref{eig} and \ref{subgv}, the random variable
$(\Theta_0(\hat\Sigma-\Sigma_0)\Theta_0)_{ij}$ has bounded fourth moments and asymptotic normality per entry follows by application of Lindeberg's central limit theorem for triangular arrays (see \cite{jvdgeer15} for more details).
\end{proof}

\begin{proof}[Proof of Theorem \ref{denorm.glasso.rem}]
For the remainder, we obtain similarly as in the proof of Theorem \ref{norm.glasso.rem},
$\|\text{rem}\|_\infty
\leq \frac{3}{2}C_L L s^{}\sqrt{d+1}\lambda^2.$
Asymptotic normality follows by analogous arguments.
%\begin{eqnarray*}
%\|\text{rem}\|_\infty
%&=& 
%\|(\hat\Theta_{w}-\Theta_0)^T(\hat \Sigma \Theta_0- I) \|_\infty
%+
%\|(\hat\Theta_{w}-\Theta_0)^T\lambda \vertiii{\hat \Theta_{w}}_1
%\\
%&\leq &
%\vertiii{\hat\Theta_{w}-\Theta_0}_1 \|\hat \Sigma \Theta_0- I\|_\infty
%+
 %\vertiii{\hat\Theta_{w}-\Theta_0}_1 \lambda \vertiii{\hat \Theta_{w}}_1
%\\
%&\leq &
%\mathcal O_P(s\log p/n) + \mathcal O_P(s\sqrt{s}\log p/n) \\
%&=& \mathcal O_P(s\sqrt{s}\log p/n).
%\end{eqnarray*}
\end{proof}

\begin{proof}[Proof of Proposition \ref{norm.glasso.rem}]
%$$\text{rem}
%= 
%- (\hat\Theta_{\text{norm}}-K_0)^T(\hat\Gamma\hat\Theta_{\text{norm}}- I) 
%- (\hat\Theta_{\text{norm}}-K_0)^T(\hat\Gamma K_0 - I).$$
%We have
Denote $C_L:=16(8L^2)^2.$
Using the results of Theorem \ref{norm.glasso.rate}, we obtain
\begin{eqnarray*}
\|\text{rem}\|_\infty
%&=& 
%\|(\hat\Theta_{\text{norm}}-K_0)^T(\hat\Gamma K_0- I) \|_\infty
%+
%\|(\hat\Theta_{\text{norm}}-K_0)^T\lambda \hat Z\hat \Theta_{\text{norm}}\|_\infty
%\\
&\leq &
\vertiii{\hat\Theta_{\text{norm}}-K_0}_1 \|\hat\Gamma K_0- I\|_\infty
+
 \vertiii{\hat\Theta_{\text{norm}}-K_0}_1 \lambda \|\hat Z\|_\infty\vertiii{\hat \Theta_{\text{norm}}}_1
\\
&\leq &
C_L s^{}\lambda\sqrt{d+1}\Lambda_{\max}(\Theta_0)  \lambda_0 + C_L s\lambda\sqrt{d+1}\Lambda_{\max}(\Theta_0)\lambda
\\
&\leq & \frac{3}{2}C_L L s^{}\sqrt{d+1}\lambda^2.
\end{eqnarray*}
The sparsity condition implies the result.
\end{proof}

\begin{proof}[Proof of Theorem \ref{node.rem}]
The proof follows along the same lines as the proof of Theorem 1 in \cite{jvdgeer15}. The only difference is that here we consider a weighted Lasso to estimate the partial correlations and the estimator $\hat\tau_j$ is defined slightly differently. But for the weighted Lasso (with weights bounded away from zero and bounded from above with high probability), oracle inequalities of the same order can be obtained, 
see Section 6.9 in \cite{hds}, i.e.
$$\|X_{-j}(\hat\gamma_j - \gamma_j^0)\|_2^2/n  + \lambda\|\hat\gamma_j - \gamma_j^0\|_1=\mathcal O_P(d_j{\log p/n}).$$
For the estimator of variance we have
\begin{eqnarray*}
|\hat\tau_j^2 -\tau_j^2| &\leq &
 \|X_{-j}(\hat\gamma_j - \gamma_j^0)\|_2^2/n + 2 |(X_j-X_{-j}\gamma_j^0)^TX_{-j}(\hat\gamma_j-\gamma_j^0)/n|
\\
&=& 
\|X_{-j}(\hat\gamma_j - \gamma_j^0)\|_2^2/n + 2 \|(X_j-X_{-j}\gamma_j^0)^TX_{-j}/n\|_\infty \|\hat\gamma_j-\gamma_j^0\|_1
\\
&=&
\mathcal O_P(1/\sqrt{n}).
\end{eqnarray*}
The rest of the proof follows as in \cite{jvdgeer15}.
%This then yields $\|\hat\Theta_j-\Theta_j^0\|_1=\mathcal O_P(d_j \sqrt{\log p/n}).$
%Finally,
%\begin{eqnarray*}
%\|\text{rem}\|_\infty
%&\leq & 
%\|(\hat\Theta_{}-\Theta_0)^T(\hat \Sigma \Theta_0- I) \|_\infty
%+
%\|(\hat\Theta_{}-\Theta_0)^T\lambda \hat Z\|_\infty
%\\
%&\leq &
%\vertiii{\hat\Theta_{}-\Theta_0}_1 \|\hat \Sigma \Theta_0- I\|_\infty
%+
 %\vertiii{\hat\Theta_{}-\Theta_0}_1 \|\hat\Sigma\hat\Theta- I\|_\infty 
%\\
%&= &
%\mathcal O_P(d\log p/n).
%\end{eqnarray*}

\end{proof}

\subsection{Proofs for directed acyclic graphs}

%\section{Proofs for directed acyclic graphs}
\begin{proof}[Proof of Theorem \ref{dag}]
By Theorem 5.1 in \cite{dag.sara}, we have under the conditions of the theorem that $\hat\pi = \pi_0$ with high probability. 
Then also 
$\predh = \predz$ for all $j$, with high probability. Therefore, the estimated $\predh$ in the definitions of $\hat\beta_j$ and $\hat\Theta_{\predh,k},k\in \predh$ (and elsewhere) can be replaced by $\predz.$
% at least $1-(4/p) \wedge 0.05$. 
The nodewise Lasso then yields oracle estimators $\hat\beta_j$ and $\hat\Theta_k,k\in\predz$ under the \condi \ref{itm:eig} 
and under the sparsity $d_j=o(\sqrt{n/\log p})$ (see \cite{jvdgeer15}).
This gives in particular that  for all $j=1,\dots,p$
$$\max_{k}\|\hat\Theta_{\predz,k}-\Theta^0_{\predz,k}\|_1  =\mathcal O_P(\max_{k}d_j\lambda_j),$$
$$
\max_{k}\|\hat\Sigma_{\predz}\hat\Theta_{\predz,k} - e_k\|_\infty =\mathcal O_P(\max_{k}\lambda_j),$$
$$\|\hat\beta_j - \beta_j^0\|_1 = \mathcal O_P(\max_{j=1,\dots,p}d_j\lambda_j).
 $$
We can write the decomposition 
\begin{eqnarray}
\label{deco.dag}
 \hat b_{k,j} -  \beta_{k,j}^0 = (\Theta_{\predz,k}^0)^T X_{\predz}^T \epsilon_j/n + \text{rem}_{k,j},
\end{eqnarray}
where $\text{rem}_{k,j}=(\hat\Theta_{\predz,k}-\Theta^0_{\predz,k})^T X_{\predz}^T \epsilon_j/n - 
(\hat\Sigma_{\predz}\hat\Theta_{\predz,k} - e_k)^T(\hat\beta_{j}-\beta_j^0).$ 
First note that by normality and by the independence of $X_{\predz}$ and $\epsilon_j$ (which follows by the independence of $\epsilon_j$'s 
and acyclicity of the graph), it holds
$\|X_{\predz}^T \epsilon_j/n\|_\infty = \mathcal O_P(\sqrt{\log p/n}).$
By H\"older's inequality
\begin{eqnarray*}
\max_{k}|\text{rem}_{k,j}|
%&=& 
%\|(\hat\Theta_{{\text{pr}}(j),k}-\Theta^0_{{\text{pr}}(j),k})^T X_{{\text{pr}}(j)}^T \epsilon_j/n - (\hat\Theta_{{\text{pr}}(j),k}^T\hat\Sigma_{{\text{pr}}(j)} - e_k)(\hat\beta_{j}-\beta_j^0)\|_\infty
%\\
&\leq &
\max_k\|\hat\Theta_{\predz,k}-\Theta^0_{\predz,k}\|_1 \|X_{\predz}^T \epsilon_j/n\|_\infty \\
&& + \;\;\max_k\|\hat\Theta_{\predz,k}^T\hat\Sigma_{\predz} - e_k\|_\infty \|\hat\beta_j - \beta_j^0\|_1
\\
&= &
\mathcal O_P(d_j\log p/n) = o_P(1/\sqrt{n}),
\end{eqnarray*}
where we used the sparsity assumption $d_j=o(\sqrt{n}/\log p)$.
Thus we have shown that the remainder in \eqref{deco.dag} is of small order $1/\sqrt{n}$.
 %and hence
%\begin{eqnarray*}
 %\hat b_{k,j} -  \beta_{k,j}^0 = (\Theta_{{\text{pr}}(j),k}^0)^T X_{{\text{pr}}(j)}^T \epsilon_j/n 
 %+ o(1/\sqrt{n}).
%\end{eqnarray*}
Then applying Lindeberg's central limit theorem for triangular arrays and by \conds \ref{eig} and \ref{subgv},
$$(\Theta_{\predz,k}^0)^T X_{\predz}^T \epsilon_j/(\sigma_{k,j} \sqrt{n}) \rightsquigarrow \mathcal N(0,1),$$
which shows the claim.
\end{proof}

\bibliography{gminf}

\begin{thebibliography}{41}
\providecommand{\natexlab}[1]{#1}
\providecommand{\url}[1]{\texttt{#1}}
\expandafter\ifx\csname urlstyle\endcsname\relax
  \providecommand{\doi}[1]{doi: #1}\else
  \providecommand{\doi}{doi: \begingroup \urlstyle{rm}\Url}\fi

\bibitem[{Belloni} et~al.(2011){Belloni}, {Chernozhukov}, and
  {Wang}]{sqrtlasso}
A.~{Belloni}, V.~{Chernozhukov}, and L.~{Wang}.
\newblock {Square-Root Lasso: Pivotal Recovery of Sparse Signals via Conic
  Programming}.
\newblock \emph{Biometrika}, 98\penalty0 (4):\penalty0 791--806, 2011.

\bibitem[{Bickel} and {Levina}(2008{\natexlab{a}})]{bickel}
P.~J. {Bickel} and E.~{Levina}.
\newblock Regularized estimation of large covariance matrices.
\newblock \emph{Annals of Statistics}, 36\penalty0 (1):\penalty0 199--227,
  2008{\natexlab{a}}.

\bibitem[{Bickel} and {Levina}(2008{\natexlab{b}})]{bickel2}
P.~J. {Bickel} and E.~{Levina}.
\newblock Covariance regularization by thresholding.
\newblock \emph{Annals of Statistics}, 36\penalty0 (6):\penalty0 2577--2604,
  2008{\natexlab{b}}.

\bibitem[{B\"uhlmann} and {van de Geer}(2011)]{hds}
P.~{B\"uhlmann} and S.~{van de Geer}.
\newblock Statistics for high-dimensional data.
\newblock \emph{Springer}, 2011.

\bibitem[{Cai} et~al.(2011){Cai}, {Liu}, and {Luo}]{cai}
T.~{Cai}, W.~{Liu}, and X.~{Luo}.
\newblock A constrained l1 minimization approach to sparse precision matrix
  estimation.
\newblock \emph{Journal of the American Statistical Association}, 106, 2011.

\bibitem[Candes and Tao(2007)]{candes2007}
E.~Candes and T.~Tao.
\newblock The dantzig selector: Statistical estimation when p is much larger
  than n.
\newblock \emph{Annals of Statistics}, 35\penalty0 (6):\penalty0 2313--2351, 12
  2007.
\newblock \doi{10.1214/009053606000001523}.
\newblock URL \url{http://dx.doi.org/10.1214/009053606000001523}.

\bibitem[{Chernozhukov} et~al.(2014){Chernozhukov}, {Chetverikov}, and
  {Kato}]{cltvch}
V.~{Chernozhukov}, D.~{Chetverikov}, and K.~{Kato}.
\newblock {Central Limit Theorems and Bootstrap in High Dimensions}.
\newblock \emph{ArXiv: 1412.3661}, 2014.

\bibitem[{Chickering}(2002)]{chickering}
D.M. {Chickering}.
\newblock {Optimal structure identification with greedy search}.
\newblock \emph{Journal of Machine Learning Research}, 3:\penalty0 507--554,
  2002.

\bibitem[d'Aspremont et~al.(2008)d'Aspremont, Banerjee, and
  El~Ghaoui]{aspremont}
Alexandre d'Aspremont, Onureena Banerjee, and Laurent El~Ghaoui.
\newblock First-order methods for sparse covariance selection.
\newblock \emph{SIAM Journal on Matrix Analysis and Applications}, 30\penalty0
  (1):\penalty0 56--66, 2008.
\newblock ISSN 0895-4798.
\newblock \doi{10.1137/060670985}.
\newblock URL \url{http://dx.doi.org/10.1137/060670985}.

\bibitem[Drton and Maathuis()]{drton2016structure}
Mathias Drton and Marloes~H Maathuis.
\newblock Structure learning in graphical modeling.
\newblock \emph{Annual Review of Statistics and Its Application}, 4:\penalty0
  365--393.

\bibitem[{Efron} et~al.(2004){Efron}, {Hastie}, {Johnstone}, and
  {Tibshirani}]{lars}
B.~{Efron}, T.~{Hastie}, I.~{Johnstone}, and R.~{Tibshirani}.
\newblock {Least Angle Regression}.
\newblock \emph{Ann. Statist.}, 32\penalty0 (2):\penalty0 407--451, June 2004.

\bibitem[{El Karoui}(2008)]{elkaroui}
N.~{El Karoui}.
\newblock {Operator norm consistent estimation of large dimensional sparse
  covariance matrices}.
\newblock \emph{Annals of Statistics}, 36\penalty0 (6):\penalty0 2717--2756,
  2008.

\bibitem[Friedman et~al.(2008)Friedman, Hastie, and Tibshirani]{glasso}
J.~Friedman, T.~Hastie, and R.~Tibshirani.
\newblock Sparse inverse covariance estimation with the graphical lasso.
\newblock \emph{Biostatistics}, 9:\penalty0 432--441, 2008.

\bibitem[Hauser and B\"uhlmann(2012)]{hauser}
A.~Hauser and P.~B\"uhlmann.
\newblock {Characterization and greedy learning of inter- ventional Markov
  equivalence classes of directed acyclic graphs}.
\newblock \emph{Journal of Machine Learning Research}, 13:\penalty0 2409--2464,
  2012.

\bibitem[{Jankov\'a} and {van de Geer}(2014)]{jvdgeer14}
J.~{Jankov\'a} and S.~{van de Geer}.
\newblock Confidence intervals for high-dimensional inverse covariance
  estimation.
\newblock \emph{Electronic Journal of Statistics}, 9\penalty0 (1):\penalty0
  1205 --1229, 2014.

\bibitem[{Jankov\'a} and {van de Geer}(2016{\natexlab{a}})]{ae}
J.~{Jankov\'a} and S.~{van de Geer}.
\newblock Semi-parametric efficiency bounds for high-dimensional models.
\newblock \emph{ArXiv:1601.00815}, 2016{\natexlab{a}}.

\bibitem[{Jankov\'a} and {van de Geer}(2016{\natexlab{b}})]{jvdgeer15}
J.~{Jankov\'a} and S.~{van de Geer}.
\newblock Honest confidence regions and optimality for high-dimensional
  precision matrix estimation.
\newblock \emph{TEST}, 26\penalty0 (1):\penalty0 143--162, 2016{\natexlab{b}}.

\bibitem[{Javanmard} and {Montanari}(2014)]{stanford1}
A.~{Javanmard} and A.~{Montanari}.
\newblock {Confidence intervals and hypothesis testing for high-dimensional
  regression}.
\newblock \emph{Journal of Machine Learning Research}, 15\penalty0
  (1):\penalty0 2869--2909, 2014.

\bibitem[Johnstone and Lu(2009)]{jl}
I.~M. Johnstone and A.~Y. Lu.
\newblock On consistency and sparsity for principal components analysis in high
  dimensions.
\newblock \emph{Journal of the American Statistical Association}, 104\penalty0
  (486):\penalty0 682--693, 2009.

\bibitem[Johnstone(2001)]{johnstone2}
Iain~M. Johnstone.
\newblock On the distribution of the largest eigenvalue in principal components
  analysis.
\newblock \emph{Annals of Statistics}, 29\penalty0 (2):\penalty0 295--327, 04
  2001.
\newblock \doi{10.1214/aos/1009210544}.

\bibitem[Liu et~al.(2013)]{liu2013gaussian}
Weidong Liu et~al.
\newblock Gaussian graphical model estimation with false discovery rate
  control.
\newblock \emph{Annals of Statistics}, 41\penalty0 (6):\penalty0 2948--2978,
  2013.

\bibitem[{Mazumder} and {Hastie}(2012)]{glasso.comp}
R.~{Mazumder} and T.~{Hastie}.
\newblock {The Graphical Lasso: New Insights and Alternatives}.
\newblock \emph{Electronic Journal of Statistics}, 6:\penalty0 2125--2149,
  2012.

\bibitem[Meinshausen and B\"uhlmann(2006)]{buhlmann}
N.~Meinshausen and P.~B\"uhlmann.
\newblock High-dimensional graphs and variable selection with the lasso.
\newblock \emph{Annals of Statistics}, 34\penalty0 (3):\penalty0 1436--1462, 06
  2006.
\newblock \doi{10.1214/009053606000000281}.
\newblock URL \url{http://dx.doi.org/10.1214/009053606000000281}.

\bibitem[Pearl(2016)]{pearl2000}
J.~Pearl.
\newblock \emph{Causality: Models, Reasoning and Inference}.
\newblock Cambridge University Press, 2016.

\bibitem[Peters and B\"uhlmann(2014)]{jonas}
J.~Peters and P.~B\"uhlmann.
\newblock {Identifiability of Gaussian structural equation models with equal
  error variances}.
\newblock \emph{Biometrika}, 101:\penalty0 219--228, 2014.

\bibitem[Ravikumar et~al.(2008)Ravikumar, Raskutti, Wainwright, and
  Yu]{ravikumar}
P.~Ravikumar, G.~Raskutti, M.~J. Wainwright, and B.~Yu.
\newblock High-dimensional covariance estimation by minimizing l1-penalized
  log-determinant divergence.
\newblock \emph{Electronic Journal of Statistics}, 5:\penalty0 935--980, 2008.

\bibitem[Ravikumar et~al.(2010)Ravikumar, Wainwright, Lafferty,
  et~al.]{ravikumar2010}
Pradeep Ravikumar, Martin~J Wainwright, John~D Lafferty, et~al.
\newblock High-dimensional {I}sing model selection using ℓ1-regularized
  logistic regression.
\newblock \emph{Annals of Statistics}, 38\penalty0 (3):\penalty0 1287--1319,
  2010.

\bibitem[Ren et~al.(2015)Ren, Sun, Zhang, Zhou, et~al.]{zhou}
Zhao Ren, Tingni Sun, Cun-Hui Zhang, Harrison~H Zhou, et~al.
\newblock Asymptotic normality and optimalities in estimation of large gaussian
  graphical models.
\newblock \emph{Annals of Statistics}, 43\penalty0 (3):\penalty0 991--1026,
  2015.

\bibitem[Rothman et~al.(2008)Rothman, Bickel, Levina, and Zhu]{rothman}
A.~J. Rothman, P.~J. Bickel, E.~Levina, and J.~Zhu.
\newblock Sparse permutation invariant covariance estimation.
\newblock \emph{Electronic Journal of Statistics}, 2:\penalty0 494--515, 2008.
\newblock \doi{10.1214/08-EJS176}.
\newblock URL \url{http://dx.doi.org/10.1214/08-EJS176}.

\bibitem[{Sun} and {Zhang}(2012)]{sunzhang}
T.~{Sun} and C.-H. {Zhang}.
\newblock {Sparse matrix inversion with scaled Lasso}.
\newblock \emph{Journal of Machine Learning Research}, 14:\penalty0 3385--3418,
  2012.

\bibitem[{Tibshirani}(1996)]{lasso}
R.~{Tibshirani}.
\newblock Regression shrinkage and selection via the lasso.
\newblock \emph{Journal of The Royal Statistical Society: Series B},
  58:\penalty0 267--288, 1996.

\bibitem[{van de Geer}(2016)]{sf}
S.~{van de Geer}.
\newblock \emph{Estimation and Testing under Sparsity: \'Ecole d'\'Et\'e de
  Saint-Flour XLV}.
\newblock Springer, 2016.

\bibitem[van~de Geer and B\"uhlmann(2009)]{vdgeer09}
S.~van~de Geer and P.~B\"uhlmann.
\newblock On the conditions used to prove oracle results for the lasso.
\newblock \emph{Electronic Journal of Statistics}, 3:\penalty0 1360--1392,
  2009.
\newblock \doi{10.1214/09-EJS506}.
\newblock URL \url{http://dx.doi.org/10.1214/09-EJS506}.

\bibitem[{van de Geer} and {B{\"u}hlmann}(2013)]{dag.sara}
S.~{van de Geer} and P.~{B{\"u}hlmann}.
\newblock {$\ell_0$-penalized maximum likelihood for sparse directed acyclic
  graphs}.
\newblock \emph{Annals of Statistics}, 41\penalty0 (2):\penalty0 536--567,
  2013.

\bibitem[{van de Geer} et~al.(2014){van de Geer}, {B{\"u}hlmann}, {Ritov}, and
  {Dezeure}]{vdgeer13}
S.~{van de Geer}, P.~{B{\"u}hlmann}, Y.~{Ritov}, and R.~{Dezeure}.
\newblock {On asymptotically optimal confidence regions and tests for
  high-dimensional models}.
\newblock \emph{Annals of Statistics}, 42\penalty0 (3):\penalty0 1166--1202,
  2014.

\bibitem[van~de Geer(2016)]{vdgeer14}
Sara van~de Geer.
\newblock Worst possible sub-directions in high-dimensional models.
\newblock \emph{Journal of Multivariate Analysis}, 146:\penalty0 248--260,
  2016.

\bibitem[Wasserman et~al.(2014)Wasserman, Kolar, Rinaldo,
  et~al.]{wasserman2014berry}
Larry Wasserman, Mladen Kolar, Alessandro Rinaldo, et~al.
\newblock Berry-{E}sseen bounds for estimating undirected graphs.
\newblock \emph{Electronic Journal of Statistics}, 8\penalty0 (1):\penalty0
  1188--1224, 2014.

\bibitem[Yu et~al.(2016)Yu, Kolar, and Gupta]{NIPS2016_6530}
Ming Yu, Mladen Kolar, and Varun Gupta.
\newblock Statistical inference for pairwise graphical models using score
  matching.
\newblock In D.~D. Lee, M.~Sugiyama, U.~V. Luxburg, I.~Guyon, and R.~Garnett,
  editors, \emph{Advances in Neural Information Processing Systems 29}, pages
  2829--2837. Curran Associates, Inc., 2016.

\bibitem[Yuan(2010)]{yuan2}
Ming Yuan.
\newblock High dimensional inverse covariance matrix estimation via linear
  programming.
\newblock \emph{The Journal of Machine Learning Research}, 11:\penalty0
  2261--2286, 2010.

\bibitem[Yuan and Lin(2007)]{yuan}
Ming Yuan and Yi~Lin.
\newblock Model selection and estimation in the {G}aussian graphical model.
\newblock \emph{Biometrika}, 94\penalty0 (1):\penalty0 19--35, 2007.

\bibitem[{Zhang} and {Zhang}(2014)]{zhang}
C.-H. {Zhang} and S.~S. {Zhang}.
\newblock {Confidence intervals for low-dimensional parameters in
  high-dimensional linear models}.
\newblock \emph{Journal of the Royal Statistical Society: Series B},
  76:\penalty0 217--242, 2014.

\end{thebibliography}

\end{document}